\newcolumntype{H}{>{\setbox0=\hbox\bgroup}c<{\egroup}@{}}
\newcommand{\R}{{\mathbb R}}
\newcommand{\E}{{\mathbb E}}
\renewcommand{\P}{{\mathcal P}}
\newcommand{\N}{{\mathbb N}}
\newcommand{\Z}{{\mathcal Z}}
\newcommand{\eps}{\varepsilon}
\newcommand{\X}{\mathcal X}
\DeclareMathOperator*{\argmax}{arg\,max}
\DeclareMathOperator{\Var}{Var}
\DeclareMathOperator{\trace}{tr}
\DeclareMathOperator{\sign}{sign}
\newtheorem{theorem}{Theorem}[section]
\newtheorem{lemma}[theorem]{Lemma}
\newtheorem{remark}[theorem]{Remark}
\newtheorem*{remark*}{Remark}
\newtheorem{example}[theorem]{Example}
\newtheorem*{condition*}{Condition}
\newtheorem*{definition*}{Definition}
\newtheorem{definition}{Definition}
\numberwithin{equation}{section}
\newcounter{rcnt}[section]
\newcommand{\rem}[1]{}
\newcounter{desccount}
\newcommand{\descref}[1]{\hyperref[#1]{#1}}
\begin{document}

\sloppy

\begin{frontmatter}

\title{Efficiency in local differential privacy}
\runauthor{Steinberger, L.}

\runtitle{Efficiency in local differential privacy}

\begin{aug}
%%%%%%%%%%%%%%%%%%%%%%%%%%%%%%%%%%%%%%%%%%%%%%%
%% Only one address is permitted per author. %%
%% Only division, organization and e-mail is %%
%% included in the address.                  %%
%% Additional information can be included in %%
%% the Acknowledgments section if necessary. %%
%% ORCID can be inserted by command:         %%
%% \orcid{0000-0000-0000-0000}               %%
%%%%%%%%%%%%%%%%%%%%%%%%%%%%%%%%%%%%%%%%%%%%%%%
\author[A]{\fnms{Lukas}~\snm{Steinberger}\ead[label=e1]{lukas.steinberger@univie.ac.at}}
%%%%%%%%%%%%%%%%%%%%%%%%%%%%%%%%%%%%%%%%%%%%%%
%% Addresses                                %%
%%%%%%%%%%%%%%%%%%%%%%%%%%%%%%%%%%%%%%%%%%%%%%
\address[A]{University of Vienna, Department of Statistics and OR\printead[presep={,\ }]{e1}}
\end{aug}

\begin{abstract}
We develop a theory of asymptotic efficiency in regular parametric models when data confidentiality is ensured by local differential privacy (LDP). Even though efficient parameter estimation is a classical and well-studied problem in mathematical statistics, it leads to several non-trivial obstacles that need to be tackled when dealing with the LDP case. Starting from a regular parametric model $\P=(P_\theta)_{\theta\in\Theta}$, $\Theta\subseteq\R^p$, for the iid unobserved sensitive data $X_1,\dots, X_n$, we establish local asymptotic mixed normality (along subsequences) of the model $$Q^{(n)}\P=(Q^{(n)}P_\theta^n)_{\theta\in\Theta}$$ generating the sanitized observations $Z_1,\dots, Z_n$, where $Q^{(n)}$ is an arbitrary sequence of sequentially interactive privacy mechanisms. This result readily implies convolution and local asymptotic minimax theorems. In case $p=1$, the optimal asymptotic variance is found to be the inverse of the supremal Fisher-Information $\sup_{Q\in\mathcal Q_\alpha} I_\theta(Q\P)\in\R$, where the supremum runs over all $\alpha$-differentially private (marginal) Markov kernels. We present an algorithm for finding a (nearly) optimal privacy mechanism $\hat{Q}$ and an estimator $\hat{\theta}_n(Z_1,\dots, Z_n)$ based on the corresponding sanitized data that achieves this asymptotically optimal variance.
\end{abstract}

\begin{keyword}[class=MSC]
\kwd[Primary ]{62F12, 62F30}
\kwd[; secondary ]{62B15, 62L05}
\end{keyword}

\begin{keyword}
\kwd{local differential privacy}
\kwd{efficiency}
\kwd{Fisher-Information}
\kwd{local asymptotic mixed normality}
\end{keyword}

\end{frontmatter}

\section{Introduction}

Consider iid data $X_1,\dots, X_n$ from a probability distribution $P_\theta$ belonging to a regular parametric model $\P = (P_\theta)_{\theta\in\Theta}$, with parameter space $\Theta\subseteq\R^p$, sample space $(\X,\mathcal F)$ and with Fisher-Information $I_\theta(\P)\in\R^{p\times p}$. If the data owners are to be protected by (non-interactive) \emph{local} differential privacy, the analyst who wants to estimate the unknown parameter $\theta$ does not get to see these original data but only observes sanitized versions $Z_1,\dots, Z_n$ such that individual $i$ was able to generate $Z_i$ using its sensitive data $X_i$ and a (publicly known) Markov kernel $Q$ that determines the conditional distribution $Z_i|X_i=x \thicksim Q(dz|x)$ and obeys the $\alpha$-differential privacy constraint
\begin{equation}\label{eq:QNI}
Q(A|x) \le e^\alpha\cdot Q(A|x'), 
\end{equation}
for all $A$ in a sigma algebra $\mathcal G$ on some sample space $\mathcal Z$ and for all $x,x'$ in the original sample space $\X$. Thus, the observed data $Z_1,\dots, Z_n$ are iid from $QP_\theta$, where $[QP_\theta](A) = \int_\X Q(A|x) P_\theta(dx)$. We will see below that the resulting statistical model $Q\P = (QP_\theta)_{\theta\in\Theta}$ is again regular with Fisher-Information denoted by $I_\theta(Q\P)$. Hence, under standard regularity conditions, the MLE in $Q\P$ is asymptotically distributed as $N(0,I_\theta(Q\P)^{-1})$ and it is well known to be efficient, i.e., this asymptotic covariance matrix is the smallest (in the partial Loewner ordering) that any (regular) estimator in the model $Q\P$ can achieve. However, we are actually free to choose the privacy mechanism $Q$, as long as it satisfies \eqref{eq:QNI}. It is therefore natural to ask, at least in the case $p=1$, what is the mechanism $Q$ that leads to the smallest asymptotic variance, or, in other words, to look for solutions of the optimization problem
\begin{equation}\label{eq:Qopt}
\sup_{Q\in\mathcal Q_\alpha} I_\theta(Q\P),
\end{equation}
where $\mathcal Q_\alpha := \mathcal Q_\alpha(\X) := \bigcup_{(\mathcal Z,\mathcal G)}\Big\{Q:\mathcal G\times\X\to[0,1] \Big| Q$ is a Markov kernel satisfying \eqref{eq:QNI}$\Big\}$ is the set of all $\alpha$-private marginal data release mechanisms. The problem \eqref{eq:Qopt} is challenging, because the constraint set is an infinite dimensional set of Markov kernels with even unspecified domain $\mathcal Z$ and the supremum may not be attained. Moreover, we are trying to \emph{maximize} a convex function over a convex domain and thus we expect to see many local maxima.
Furthermore, a (near) maximizer will typically depend on the unknown parameter $\theta$ and the resulting privacy mechanism, $Q_\theta$ say, is also unknown and can not be shared with the data owners to generate sanitized data $Z_i$, $i=1,\dots, n$.

In this paper we develop the theory of statistical efficiency for local differential privacy, extending the classical approach due to \citet{LeCam60}, \citet{Hajek70} and \citet{Jeganathan80}. We study regularity properties of locally private models, prove a general local asymptotic mixed normality result and derive a private convolution theorem. Finally, we prove exact attainability of the asymptotic lower bound. In particular, we develop an algorithm based on discretization of the model $\P$ and a linear program proposed by \citet{Kairouz16} to find a privacy mechanism (Markov kernel) $\hat{Q}_\theta$ such that $I_\theta(\hat{Q}_{\theta}\P)\approx \sup_{Q\in\mathcal Q_\alpha} I_\theta(Q\P)$. We employ a two-step (sequentially interactive) procedure that first consistently estimates $\theta$ in an $\alpha$-private way with an estimator $\tilde{\theta}_{n_1}$ based on a subsample $Z_1, \dots, Z_{n_1}$ generated by some preliminary privacy mechanism $Q_0$. The individuals in the second part of the sample then use $\hat{Q}_{\tilde{\theta}_{n_1}}$ to generate $Z_{n_1+1},\dots, Z_n$ in an $\alpha$-private way. The MLE $\hat{\theta}_{n_2}$ in the model $(\hat{Q}_{\tilde{\theta}_{n_1}}\P)^{n_2}$ is finally shown to be regular with asymptotic distribution $N(0,[\sup_{Q\in\mathcal Q_\alpha}I_\theta(Q\P)]^{-1})$ (see Figure~\ref{fig:Qn} for a graphical representation of this procedure). Notice that since this procedure is sequentially interactive rather than simply non-interactive (that is, the local randomization mechanism $\hat{Q}_{\tilde{\theta}_{n_1}}$ used by each individual in the second group depends on the outcome of previous randomizations from the first group), it is no longer clear whether the reciprocal of \eqref{eq:Qopt} is indeed the smallest possible asymptotic variance among all mechanisms within this larger class of procedures. Thus, to establish asymptotic efficiency, we prove local asymptotic mixed normality (along subsequences) for every model of the form $Q^{(n)}\P^n = (Q^{(n)}P_\theta^n)_{\theta\in\Theta}$, $\Theta\subseteq\R^p$, where $\P$ is regular (differentiable in quadratic mean) and $Q^{(n)}$ is an arbitrary sequence of sequentially-interactive $\alpha$-differential privacy mechanisms. This result implies convolution and local asymptotic minimax theorems which show that the reciprocal of the expression in \eqref{eq:Qopt} is, indeed, the minimal asymptotic variance. Since our two-step procedure belongs to this class of sequentially-interactive mechanisms and we show that it is regular with the minimal asymptotic variance, it is, indeed, asymptotically efficient.

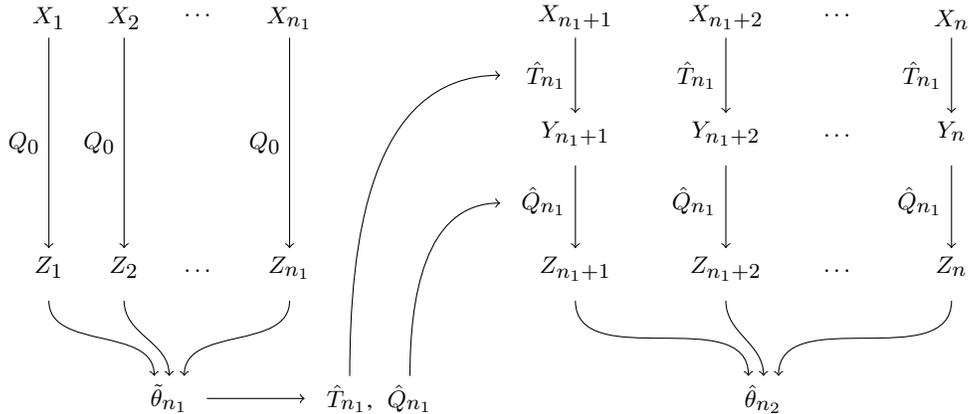
\begin{figure}
\caption{Graphical representation of the efficient two-step $\alpha$-sequentially interactive privacy mechanism and estimation procedure.}
\centering
\begin{tikzpicture}
%===== group 1 ========
\coordinate[label=above:$X_1$] (X1) at (0,0);
\coordinate[label=below:$Z_1$] (Z1) at (0,-2.8);
\draw[->] (X1) -- (Z1);
\coordinate[label=left:$Q_0$] (C1) at ($(X1)!0.5!(Z1)$);
\draw (C1);

\coordinate[label=above:$X_2$] (X2) at (1,0);
\coordinate[label=below:$Z_2$] (Z2) at (1,-2.8);
\draw[->] (X2) -- (Z2);
\coordinate[label=left:$Q_0$] (C2) at ($(X2)!0.5!(Z2)$);
\draw (C2);

\draw (2,0.3) node[]{$\dots$};
\draw (2,-3.1) node[]{$\dots$};

\coordinate[label=above:$X_{n_1}$] (Xn1) at (3.2,0);
\coordinate[label=below:$Z_{n_1}$] (Zn1) at (3.2,-2.8);
\draw[->] (Xn1) -- (Zn1);
\coordinate[label=left:$Q_0$] (Cn1) at ($(Xn1)!0.5!(Zn1)$);
\draw (Cn1);

\coordinate[label=below:$\tilde{\theta}_{n_1}$] (thetan1) at (1.6,-4.5);

\draw [->, out=270, in=90] (Z1) +(0,-0.7) to (1.4,-4.5);
\draw [->, out=270, in=90] (Z2) +(0,-0.7) to (thetan1);
\draw [->, out=270, in=90] (Zn1) +(0,-0.7) to (1.8,-4.5);

\coordinate[label=below:$\hat{Q}_{n_1}$] (Qhat) at (4.8,-4.5);
\coordinate[label=below:$\hat{T}_{{n_1}}\text{,}$] (That) at (4,-4.5);

\draw[->] (thetan1) +(0.5,-0.3) -- (3.4,-4.8);

%======= group 2 =======
\coordinate[label=above:$X_{n_1+1}$] (Xn11) at (7,0);
\coordinate[label=below:$Y_{n_1+1}$] (Y1) at (7,-1);
\coordinate[label=below:$Z_{n_1+1}$] (Zn11) at (7,-2.8);
\draw[->] (Xn11) -- (Y1);
\coordinate[label=left:$\hat{T}_{{n_1}}$] (D1) at ($(Xn11)!0.5!(Y1)$);
\draw (D1);

\coordinate[label=above:$X_{n_1+2}$] (Xn12) at (9,0);
\coordinate[label=below:$Y_{n_1+2}$] (Y2) at (9,-1);
\coordinate[label=below:$Z_{n_1+2}$] (Zn12) at (9,-2.8);
\draw[->] (Xn12) -- (Y2);
\coordinate[label=left:$\hat{T}_{{n_1}}$] (D2) at ($(Xn12)!0.5!(Y2)$);
\draw (D2);

\draw (10.5,0.3) node[]{$\dots$};
\draw (10.5,-1.3) node[]{$\dots$};
\draw (10.5,-3.1) node[]{$\dots$};

\coordinate[label=above:$X_{n}$] (Xn) at (12,0);
\coordinate[label=below:$Y_{n}$] (Yn) at (12,-1);
\coordinate[label=below:$Z_{n}$] (Zn) at (12,-2.8);
\draw[->] (Xn) -- (Yn);
\coordinate[label=left:$\hat{T}_{{n_1}}$] (Dn) at ($(Xn)!0.5!(Yn)$);
\draw (Dn);

\draw[->] (Y1) +(0,-0.7) -- (Zn11);
\coordinate[label=left:$\hat{Q}_{n_1}$] (E1) at ($(Y1)!0.65!(Zn11)$);
\draw (E1);
\draw[->] (Y2) +(0,-0.7) -- (Zn12);
\coordinate[label=left:$\hat{Q}_{n_1}$] (E2) at ($(Y2)!0.65!(Zn12)$);
\draw (E2);
\draw[->] (Yn) +(0,-0.7) -- (Zn);
\coordinate[label=left:$\hat{Q}_{n_1}$] (E3) at ($(Yn)!0.65!(Zn)$);
\draw (E3);

\coordinate[label=below:$\hat{\theta}_{n_2}$] (thetan) at (9.5,-4.5);

\draw [->, out=270, in=90] (Zn11) +(0,-0.7) to (9.3,-4.5);
\draw [->, out=270, in=90] (Zn12) +(0,-0.7) to (thetan);
\draw [->, out=270, in=90] (Zn) +(0,-0.7) to (9.7,-4.5);

\draw [->, out=90, in=180] (Qhat) to (6,-2.2);
\draw [->, out=90, in=180] (That) to (6,-0.5);

\end{tikzpicture}
\label{fig:Qn}
\end{figure}

The paper is organized as follows. We finish this section with a discussion of related literature and we introduce the necessary notation for our exposition. In Section~\ref{sec:Examples} we consider two very basic statistical models, the Bernoulli$(\theta)$ and the Binomial$(2,\theta)$ model, to illustrate some of the challenges a theory of statistical efficiency is facing under local differential privacy. In Section~\ref{sec:LB} we study how differentiability in quadratic mean (DQM) of statistical models for original sensitive data $X_i$ is inherited to the model describing the sanitized data $Z_i$ in the non-interactive case. We then establish local asymptotic mixed normality (LAMN) along subsequences for every model $Q^{(n)}\P^n$, where $\P$ is DQM and $Q^{(n)}$ is any sequence of sequentially interactive privacy mechanisms. Given this result, we can connect to the classical literature on LAMN models and immediately obtain asymptotic convolution and local asymptotic minimax theorems. In Section~\ref{sec:MLE} we show that the lower bounds of the convolution and local minimax theorems can actually be attained by a two-step MLE approach. The crucial step towards this goal is to approximately solve \eqref{eq:Qopt} using discretization. We also show that an appropriate discretization generally exists. Finally, in Section~\ref{sec:appl} we consider the special cases of the Gaussian location and the Gaussian scale model where we can provide simple explicit discretizations. In these examples we also demonstrate that our numerical algorithm, which is based on the linear program of \citet{Kairouz16}, is computationally feasible. Most technical results and proofs are collected in the supplementary material.

\subsection{Related literature}

Differential privacy and its \emph{local} version, where there is no need for a trusted third party, has originated some 20 years ago in the computer science literature \citep[cf.][]{Dinur03, Dwork04, Dwork06, Dwork08a, Evfim03} as a means for privacy protection that is insusceptible to any type of adversary. It took a few years until mathematical statisticians started to investigate the new notion, which is an inherently statistical one, and groundbreaking first contributions are \citet{Wasserman10} and \citet{Duchi17}. Recently, however, the statistical literature on differential privacy has exploded. Consider, for example, \citet{Rohde20, Butucea19, Butucea22, Duchi23, Lalanne22, Acharya22, Cai21, Dunsche22}, to mention only a few of the most recent contributions. Nevertheless some of the most basic statistical questions are still open under differential privacy. One such problem is that of asymptotically efficient parameter estimation. In the central paradigm of differential privacy, efficient estimation has been addressed early on by works of \citet{Smith08, Smith11}. However, in the statistically more challenging local paradigm, where there is no trusted data curator, no analog to the classical theory of statistical efficiency has been available so far. An interesting contribution towards such a direction is \citet{Barnes20}, who investigate and bound Fisher-Information when original data are perturbed using a differentially private randomization mechanism. But those authors have not attempted to find an optimal randomization mechanism or a subsequent estimation procedure. The work probably closest to our own approach is \citet{Duchi23}. They investigate the local minimax risk when data are subject to a local differential privacy constraint. They develop a new information quantity they call the $L^1$-information and use it to characterize the local minimax risk in the regular parametric case. However, while they are more ambitious than our present investigations in that they aim also at potentially non-parametric models, they can only characterize the difficulty of a regular parametric estimation problem up to certain numerical constants and thus all their attainability results are also only up to numerical factors. Hence, the question of existence and construction of asymptotically efficient (asymptotic minimum variance) locally private estimation procedures is still open. In this paper we begin to develop the statistical theory of asymptotic efficiency for local differential privacy by starting with simple regular parametric statistical models. This objective already poses several non-trivial challenges that need to be carefully addressed. For traditional results on statistical efficiency we mainly rely on the well established theory as presented in \citet{vanderVaart07} and \citet{Hopfner14}. 

%Finally, we point out that the problem of maximizing Fisher-Information such as in \eqref{eq:Qopt} also prominently occurs in the literature on optimal experimental designs \citep[cf.][]{Pukelsheim06, Atkinson92}.

%
%Only very recently have statisticians started to gain interest in local differential privacy \citep[see, e.g.,][]{Duchi13a, Duchi13b, Duchi14, Wasserman10, Smith08, Smith11, Ye17, Kroll19}. So far, however, the only rigorous results on statistical estimation under \emph{local} differential privacy deal with optimal rates of convergence of estimators whose worst case risk is characterized only up to constants. 
%In this paper, we are the first to provide asymptotic efficiency results analogous to the classic asymptotic theory of efficiency due to \citet{Hajek70} and \citet{LeCam60}, but for local differential privacy. In contrast to the central paradigm, where often differentially private estimators can easily be constructed that have the same asymptotic variance as their non-private analoges \citep{Smith08, Smith11}, we shall see that a theory of statistical efficiency under local differential privacy is much more challenging. 
%

%========================================================================================

\subsection{Preliminaries and notation}
\label{sec:Prelim}

Let $(\Z_1,\mathcal G_1), \dots, (\Z_n,\mathcal G_n)$, $(\Z,\mathcal G)$, $(\Z',\mathcal G')$ and $(\X,\mathcal F)$ be measurable spaces and write $\mathcal Z^{(n)} := \mathcal Z_1\times\dots\times\mathcal Z_n$ and $\mathcal G^{(n)} := \mathcal G_1\otimes\dots\otimes\mathcal G_n$. Moreover, we use the conventions that $\mathcal Z^{(0)}= \varnothing = \mathcal G^{(0)}$ and $A\times \varnothing = A$ for any set $A$. Let $\mathfrak P(\Z) = \mathfrak P(\Z, \mathcal G)$ be the set of all probability measures on $(\mathcal Z, \mathcal G)$ and $\mathfrak P(\X\to\Z)$ be the set of all Markov kernels $Q:\mathcal G\times\X\to[0,1]$ from $(\X,\mathcal F)$ to $(\Z, \mathcal G)$. Note that we can always think of a Markov kernel $Q\in\mathfrak P(\X\to\Z)$ as a linear mapping from $\mathfrak P(\X)$ to $\mathfrak P(\Z)$ by defining $P\mapsto QP$ as $[QP](dz) := \int_\X Q(dz|x)\,P(dx)$. In the same sense we can also consider compositions $[QR](dz|x) = \int_{\Z'} Q(dz|z')\,R(dz'|x)$ of Markov kernels $R\in\mathfrak P(\X\to\Z')$ and $Q\in\mathfrak P(\Z'\to\Z)$. For $P\in\mathfrak P(\X)$, we easily see that $Q[RP] = [QR]P$ and thus it is allowed to just write $QRP$. For a statistical model $\P\subseteq \mathfrak P(\X)$ we also write $Q\P = \{QP:P\in\P\}$ for the model obtained by applying $Q$ to all original probability measures in $\P$. Notice that any measurable function $T:\X\to\Z$ can also be represented as a (degenerate) Markov kernel $R_T\in \mathfrak P(\X\to\Z)$ by defining $R_T(dz|x)$ to be the Dirac measure $\delta_{T(x)}$ at $T(x)$. Hence, we sometimes write $T\P := R_T\P$ which is easily seen to coincide with the push-forward model $\{PT^{-1}: P\in\P\}$, since $[R_TP](A) = \int_\X \delta_{T(x)}(A)\,P(dx) = \int_\X \mathds 1_{T^{-1}(A)}(x)\,P(dx) = PT^{-1}(A)$. Similarly, we can also make sense of the composition of a Markov kernel $Q\in\mathfrak P(\Z'\to\Z)$ and the measurable function $T:\X\to\mathcal Z'$, by setting $[QT](dz|x) := [QR_T](dz|x) = Q(dz|T(x))$. Throughout, we write $J_p$ for the $p\times p$ identity matrix and $\|\cdot\|_r$ to denote the $\ell_r$ as well as the $L_r(\mu)$ norm whenever the meaning is clear from the context. We also use simply $\|\cdot\|$ to denote the Euclidean norm of vectors and the spectral norm for matrices and we write $\preccurlyeq$ to denote the partial Loewner ordering of matrices. If $\mathcal P = (P_\theta)_{\theta\in\Theta}$ is a parametric model with $\Theta\subseteq\R^p$ that is dominated by $\mu$, we write $p_\theta = \frac{dP_\theta}{d\mu}$ and $\E_\theta$ for a $\mu$-density and for the expectation operator with respect to $P_\theta$, respectively. If $V$ is a random $p$-vector we also write $\Var[V]$ to denote the corresponding $p\times p$ covariance matrix. Finally, $\stackrel{R_{n}}{\rightsquigarrow}$ denotes weak convergence with respect to the sequence of probability measures $(R_n)_{n\in\N}$.

\subsubsection{Sequentially interactive and non-interactive differential privacy}
\label{sec:DefPriv}

For $x,x'\in\X^n$, define $d_0(x,x') := |\{i:x_i\ne x_i'\}|$ to be the Hamming distance, i.e., the number of differing components in $x$ and $x'$. Recall that for $\alpha\in(0,\infty)$, a Markov kernel $Q \in\mathfrak P(\X^n\to\Z)$ is called \emph{$\alpha$-differentially private}, if, for all $A\in\mathcal G$ and for all $x,x'\in\X^n$ with $d_0(x,x')=1$, we have
\begin{equation}\label{eq:alphaPriv}
Q(A|x) \;\le\; e^\alpha\,Q(A|x').
\end{equation}
We sometimes also refer to such a Markov kernel as a \emph{privacy mechanism} or a \emph{channel}.
Note that this definition implies that the probability measures $Q(dz|x)$, for different $x\in\X^n$, are mutually absolutely continuous (equivalent). In particular, if $\P\subseteq\mathfrak P(\X^n)$ is a statistical model on $\X^n$, then all the measures $QP$, for $P\in\P$, are equivalent and they are also equivalent to $Q(dz|x)$, for any $x\in\X^n$. We now turn to \emph{local differential privacy}, that is, the situation where there is no trusted data curator available and each individual is able to generate a sanitized version $Z_i$ of their sensitive data $X_i$ on their \emph{local} machine without ever sharing $X_i$ with anybody else.

First, we define the set of marginal $\alpha$-differentially private channels by
\begin{equation}\label{eq:Margset}
\mathcal Q_\alpha(\X\to\mathcal Z) := \left\{ Q\in\mathfrak P(\X\to\Z) : Q(A|x)\le e^\alpha Q(A|x'), \;\forall A\in\mathcal G, \forall x,x'\in\X \right\}.
\end{equation}
When searching for an optimal privacy mechanism, we want to impose no a priori restrictions on the space $(\mathcal Z,\mathcal G)$ from which the sanitized observations are drawn. Thus, we also consider the union over all possible measurable output spaces
\begin{equation}
\mathcal Q_\alpha := \mathcal Q_\alpha(\X) := \bigcup_{(\mathcal Z,\mathcal G)} \mathcal Q_\alpha(\X\to\mathcal Z).\footnote{Notice that strictly speaking $\mathcal Q_\alpha(\X)$ is set theoretically not well defined, as it is at least as large as the set of all sets. However, this is not an issue. We simply need to define expressions like $\sup_{Q\in\mathcal Q_\alpha(\X)} I_\theta(Q\P)$ by $\sup\{a\in\R: \exists (\mathcal Z,\mathcal G) \text{ measurable}: \exists Q\in\mathcal Q_\alpha(\X\to\mathcal Z) : a = I_\theta(Q\P)\}$}
\end{equation}
The elements of $\mathcal Q_\alpha$ are the basic building blocks of locally private channels, because they can be executed by a single individual independently of anybody else, using only their own information $X_i=x$ and producing a sanitized version $Z_i\thicksim Q(dz|x)$.
We also introduce two specific classes of $\alpha$-locally differentially private mechanisms. A Markov kernel $Q^{(n)}\in \mathfrak P(\X^n \to \Z^{(n)})$ is said to be \emph{sequentially interactive} if the following condition is satisfied: For every $i=1,\dots, n$, there exists a Markov kernel $Q_i\in\mathfrak P(\X\times\mathcal Z^{(i-1)}\to\mathcal Z_i)$, such that for all $x_1,\dots, x_n\in\X$,
\begin{align}\label{eq:Seq}
Q^{(n)}\left( dz_{1:n}\Big|x_1,\dots, x_n\right)
= Q_{z_{1:n-1}}(dz_n|x_n) 
Q_{z_{1:n-2}}(dz_{n-1} |x_{n-1})\cdots Q_\varnothing(dz_1|x_1),
\end{align}
where $Q_{z_{1:i-1}}(dz_i|x_i) := Q_i(dz_i|x_i,z_{1:i-1})$, $z_{1:0} = \varnothing$ and $z_{1:i} = (z_1,\dots, z_i)^T\in\mathcal Z^{(i)}$. 
Here, the idea is that individual $i$ can only use $X_i$ and previous $Z_j$, $j<i$, in its local privacy mechanism, thus leading to the sequential structure in the above definition. If, in addition, for all $i\in[n]$ and for all $z_{1:i-1}\in\mathcal Z^{(i-1)}$ we have $Q_{z_{1:i-1}}\in\mathcal Q_\alpha(\X\to\mathcal Z_i)$, then, by the usual approximation of integrands by simple functions, it is easy to see that $Q$ in \eqref{eq:Seq} is $\alpha$-differentially private (i.e., satisfies \eqref{eq:alphaPriv}), in which case we call it $\alpha$-\emph{sequentially interactive}. This notion coincides with the definition of sequentially interactive channels in \citet{Duchi17} and \citet{Rohde20}. 

An important subclass of sequentially interactive mechanisms are the so called \emph{non-interactive} mechanisms $Q\in\mathfrak P(\X^n\to\mathcal Z^{(n)})$ that are of product form
\begin{equation}\label{eq:non-Inter}
Q\left( dz_{1:n}\Big|x_1,\dots, x_n\right) = \bigotimes_{i=1}^n Q_i(dz_i|x_i), \quad\quad \forall x_1,\dots, x_n\in\X,
\end{equation}
for some marginal kernels $Q_i\in\mathfrak P(\X\to\Z_i)$.
Clearly, a non-interactive mechanism $Q$ satisfies \eqref{eq:alphaPriv} if, and only if, for all  $i=1,\dots, n$, $Q_i\in\mathcal Q_\alpha$. In that case it is also called \emph{$\alpha$-non-interactive}. 

%====================================================================================

\section{Two simple illustrative examples}
\label{sec:Examples}

We begin our exposition by considering two very basic statistical models and investigate the possibility of efficient parameter estimation in these special cases in order to highlight some of the obstacles a general theory of efficiency has to overcome.

\subsection{Bernoulli data}
\label{sec:Bernoulli}

Perhaps one of the most basic models of statistics is the simple Bernoulli experiment with unknown success probability $\theta\in\Theta=(0,1)$ and (marginal) sample space $\X=\{0,1\}$, that is, $p_\theta(x) = \theta^x(1-\theta)^{1-x}$, $x\in\X$. The model $\P=(p_\theta)_{\theta\in\Theta}$ is regular in the sense of differentiability in quadratic mean (see Definition~\ref{def:DQM} below) with score $s_\theta$ and has Fisher-Information $I_\theta(\P) = [\theta(1-\theta)]^{-1}$. For arbitrary $Q\in\mathcal Q_\alpha(\X)$, the model $Q\P$ of the sanitized data $Z$ is also regular with Fisher-Information denoted by $I_\theta(Q\P)$ (cf. Lemma~\ref{lemma:DQM} below). 
We now show that the $\alpha$-private channel $Q_0\in\mathfrak P(\{0,1\}\to\{0,1\})$ defined by its matrix representation
\begin{align*}
Q_0 \triangleq \begin{pmatrix}
e^\alpha &1\\
1 &e^\alpha
\end{pmatrix}\frac{1}{e^\alpha+1},
\end{align*}
that is, $Q_0(0|0) = \frac{e^\alpha}{e^\alpha+1} = Q_0(1|1)$, maximizes the Fisher-Information $Q\mapsto I_\theta(Q\P)$ over all of $\mathcal Q_\alpha$ and for every $\theta\in(0,1)$. The channel $Q_0$ actually coincides with Warner's original randomized response mechanism \citep{Warner65}. The following argument is inspired by \citet[][Theorem~18]{Kairouz16}.

Fix a mechanism $Q\in \mathcal Q_\alpha(\X)$. For $\nu(dz):= Q(dz|0)$, choose $\nu$-densities $q(z|0)$ and $q(z|1)$ of $Q(dz|0)$ and $Q(dz|1)$, respectively. In particular, $q(z|0)=1$. Consider the formal matrix inverse of the randomized response mechanism $Q_0$, that is, 
\begin{align*}
Q_0^{-1} \triangleq \begin{pmatrix}
e^\alpha &-1\\
-1 &e^\alpha
\end{pmatrix}\frac{1}{e^{\alpha}-1}.
\end{align*}
Even though $Q_0^{-1}$ does not define a Markov kernel (as it contains negative values), we can still study the composition $\bar{Q} := QQ_0^{-1}$, which we define by its $\nu$-densities $\bar{q}(z|x) := \sum_{y\in\{0,1\}}q(z|y)Q_0^{-1}(y|x) = \frac{1}{e^{\alpha}-1}\left[ q(z|x) e^\alpha - q(z|1-x)\right]$. Thus, $\bar{Q}(A|x) =  \frac{1}{e^{\alpha}-1}\left[ Q(A|x) e^\alpha - Q(A|1-x)\right] \ge 0$, $\bar{Q}(\Z|x) = 1$, $A\mapsto \bar{Q}(A|x)$ is $\sigma$-additive and $x\mapsto \bar{Q}(A|x)$ is measurbale, that is, $\bar{Q}$ is a Markov kernel. Lemma~\ref{lemma:DQM} yields that both $Q_0\mathcal P$ and $Q\mathcal P$ are regular with scores $r_\theta$ and $t_\theta$, say. By construction $\bar{Q}Q_0 = Q$, which means that $(Z,X)\thicksim Q(dz|x)P_\theta(dx)$ is the $(Z,X)$-marginal distribution of $(Z,Y,X)\thicksim \bar{Q}(dz|y)Q_0(dy|x)P_\theta(dx)$. Moreover, $X$ and $Z$ are conditionally independent given $Y$. Thus, in view of Lemma~\ref{lemma:DQM},
\begin{align*}
I_\theta(Q\mathcal P) &= \Var(t_\theta(Z)) = \Var(\E[s_\theta(X)|Z]) = \Var(\E[ \E[s_\theta(X)|Y,Z]|Z])\\
&= \Var(\E[ \E[s_\theta(X)|Y]|Z]) = \Var(\E[ r_\theta(Y)|Z]) \\
&= \Var(r_\theta(Y)) - \E[\Var(r_\theta(Y)|Z)]
\le \Var(r_\theta(Y)) = I_\theta(Q_0\mathcal P).
\end{align*}
Since $Q$ was arbitrary, optimality of $Q_0$ is established.

It is easy to see that 
$$
I_\theta(Q_0\P) = \left[ \frac{e^\alpha}{(e^\alpha-1)^2} + \theta(1-\theta)\right]^{-1}.
$$
Moreover, if $Z_1, \dots, Z_n$ are iid from $Q_0P_\theta$ (note that such a sample can be generated in an $\alpha$-non-interactive way by letting all $n$ individuals use the same $Q_0$ independently to generate $Z_i\thicksim Q_0(dz|x=X_i)$), then the estimator
$$
\hat{\theta}_{\text{priv}} := \frac{e^\alpha+1}{e^\alpha-1}\left(\bar{Z}_n - \frac{1}{e^\alpha+1}\right),
$$
is UMVUE among all non-interactive $\alpha$-private procedures, reaching the (minimal) Cram\'{e}r-Rao lower bound of $I_\theta(Q_0\P)^{-1}$, which is the smallest lower bound we can hope for in view of our choice $Q_0$. Thus, in the simple Bernoulli model, efficient estimation is possible even in finite samples.

\begin{remark}[On the $L^1$-information of \citet{Duchi23}]\normalfont
In a setup similar to ours -- yet a more ambitious semi-parametric one -- \citet{Duchi23} proposed the so-called $L^1$-information as an alternative to Fisher-Information as a complexity measure in local differential privacy. It is defined as $\mathbb I_\theta(\P) := \E_\theta[|s_\theta|]$, where $s_\theta$ is the score at $\theta$ in the model $\P$. One may wonder if this quantity, which was shown to play an important role in determining the rate of convergence in semi-parametric problems, also characterizes the difficulty of parametric estimation problems and how it compares to the optimal Fisher-Information $I_{\theta,\alpha}^*(\P) := \sup_{Q\in\mathcal Q_\alpha(\X)}I_\theta(Q\P)$. From the risk bounds in \citet{Duchi23} one might conjecture that $4\mathbb I_\theta(\P)^{-2}$ is the asymptotic variance of $\sqrt{n\alpha^2}(\hat{\theta}_n-\theta)$, for an $\alpha$-locally private and efficient estimator $\hat{\theta}_n$. This means that $\sqrt{n}(\hat{\theta}_n-\theta)$ has asymptotic variance $\frac{4}{\alpha^2}\mathbb I_\theta(\P)^{-2}$. Our theory (see the discussion in Section~\ref{sec:ConvMinMax}) predicts that this variance is equal to $I_{\theta,\alpha}^*(\P)^{-1}$. We can evaluate these expressions at least in the Bernoulli example, where we obtain $\frac{4}{\alpha^2}\mathbb I_\theta(\P)^{-2} = \frac{1}{\alpha^2}$, whereas 
$I_{\theta,\alpha}^*(\P)^{-1} =  \frac{e^\alpha}{(e^\alpha-1)^2} + \theta(1-\theta)$. Although the two quantities are clearly not equal for all $\theta\in\Theta$ even after arbitrary rescaling, we see that at least in the high-privacy regime $\alpha\to0$, we have
$$
\frac{I_{\theta,\alpha}^*(\P)^{-1}}{\frac{4}{\alpha^2}\mathbb I_\theta(\P)^{-2}} \quad\xrightarrow[\alpha\to0]{}\quad 1.
$$
Hence, we might wonder if the $L^1$-information can at least characterize the optimal variance for small values of $\alpha$.
\end{remark}

\subsection{Binomial $(2,\theta)$ data}

Suppose now that the model $\P=(p_\theta)_{\theta\in\Theta}$ is only slightly more complicated than above and defined by $p_\theta(x) = \binom{2}{x}\theta^x(1-\theta)^{2-x}$, for $x\in\X=\{0,1,2\}$ and $\theta\in\Theta=(0,1)$. By analogy with the Bernoulli model, an intuitively appealing candidate for the optimal $\alpha$-private channel distribution in this case is
\begin{align*}
Q_0 \triangleq \begin{pmatrix}
e^\alpha &1 &1\\
1 &e^\alpha &1\\
1 &1 &e^\alpha
\end{pmatrix}\frac{1}{e^\alpha+2}.
\end{align*}
However, at least for certain values of the parameter $\theta$, this can be improved. In fact, \citet{Hucke19} showed \citep[using results from][]{Kairouz16} that the following mechanism maximizes $Q\mapsto I_\theta(Q\P)$ over all $\alpha$-private channels with finite state space $\Z$:
\begin{align}\label{eq:QBinom}
Q_{\theta}^*=
\begin{cases}
	\begin{pmatrix}
		e^\alpha &1  &1\\
		1 &e^\alpha  &e^\alpha\\
		0 &0  &0
	\end{pmatrix}\frac{1}{e^\alpha + 1}, &\text{if } 0<\theta\le \frac{1}{2} - c_\alpha, \\
	\begin{pmatrix}
		e^\alpha  &1  &1\\
		1  &e^\alpha  &1\\
		1  &1 &e^\alpha
	\end{pmatrix}\frac{1}{e^\alpha + 2}, &\text{if } \frac{1}{2} - c_\alpha<\theta< \frac{1}{2} + c_\alpha, \\
	\begin{pmatrix}
		e^\alpha  &e^\alpha  &1\\
		1  &1  &e^\alpha \\
		0  &0  &0
	\end{pmatrix}\frac{1}{e^\alpha + 1}, &\text{if } \frac{1}{2} + c_\alpha\le\theta<1,
\end{cases}
\end{align}
for a constant $c_\alpha\in[0,\frac{1}{2})$ that satisfies $c_\alpha=0$ if, and only if, $\alpha\le \log(3)$. In view of Lemma~\ref{lemma:finiteReduction} below, the restriction of finite state space can be dropped and the channel $Q_\theta^*$ is even optimal among all $\alpha$-private mechanisms in $\mathcal Q_\alpha(\X)$. \citet{Hucke19} also showed that if $\alpha<\log(3)$, then $I_\theta(Q_0\P) < I_\theta(Q_\theta^*\P)$, for all $\theta\in(0,1)$, showing that in the high privacy regime the intuitive choice $Q_0$ is suboptimal at every point of the parameter space. Furthermore, irrespective of the value of $\alpha\in(0,\infty)$, there can not be another channel $Q\in\mathcal Q_\alpha(\X)$ that is independent of $\theta$ and satisfies $I_\theta(Q\P) = I_\theta(Q_\theta^*\P)$, for all $\theta\in(0,1)$. To see this last claim, we first note that \citet{Hucke19} also established non-differentiability of $\theta\mapsto I_\theta(Q_\theta^*\P)$ at $\frac{1}{2} - c_\alpha$ and $\frac{1}{2} + c_\alpha$. However, Lemma~\ref{lemma:BinomialDiff} in the appendix shows that $\theta\mapsto I_\theta(Q\P)$ is differentiable on $(0,1)$ for every channel $Q\in\mathcal Q_\alpha(X)$.

This analysis leads to two conclusions. First, somewhat surprisingly, it can be beneficial to consider channels with smaller output/state space $\Z$ than input/sample space $\X$, at least in some parts of the parameter space, as is the case in \eqref{eq:QBinom}, where for $\theta$ close to zero and one, $Q^*_\theta$ has an entire row containing only zeros. Second, no non-interactive $\alpha$-differentially private mechanism with identical marginals can be optimal for estimation over the entire parameter space. The optimal (marginal) choice $Q_\theta^*$ necessarily depends on the unknown parameter $\theta$ and any a priori choice $Q^*_{\theta_0}$ will be suboptimal for certain other parameters $\theta\neq\theta_0$. The last observation naturally leads to the idea of a data dependent choice of channel 
$$
Q_{\tilde{\theta}_{n_1}}^*
$$ 
based on some preliminary $\alpha$-private estimator $\tilde{\theta}_{n_1}$ from sanitized data $Z_1,\dots, Z_{n_1}$. Thus, the randomization mechanism used by the second group $i=n_1+1,\dots, n$ of the data owners must depend on sanitized data from the first group of data providers. In other words, if we want to (at least asymptotically) reach the minimal variance $I_\theta(Q_\theta^*\P)^{-1}$, we have to consider sequentially interactive mechanisms.

Notice, however, that turning to sequentially interactive procedures, we have left the class of statistical procedures within which we were trying to find an optimal one. In this section so far we had only considered iid models for data vectors $X=(X_1,\dots, X_n)$ of the form $\P^n = (P_\theta^n)_{\theta\in\Theta}$ and marginal mechanisms $Q \in\mathcal Q_\alpha$ that can be used by each data owner independently. Hence, we only considered non-interactive channels with identical marginals $Q^{(n)} = \bigotimes_{i=1}^n Q_i$, $Q_i=Q\in\mathcal Q_\alpha$. Therefore, the model of the sanitized observations $Z=(Z_1,\dots, Z_n)$ was again an iid model $Q^{(n)}\P^n = (Q^{(n)}P_\theta^n)_{\theta\in\Theta} = ([QP_\theta]^n)_{\theta\in\Theta}$ and the joint Fisher-Information is simply $I_\theta(Q^{(n)}\P^n) = n I_\theta(Q\P)$. So the question arises if it may be possible to carefully pick an $\alpha$-sequentially-interactive channel $Q^{(n)}$ such that $\frac{1}{n}I_\theta(Q^{(n)}\P^n) > \sup_{Q\in\mathcal Q_\alpha}I_\theta(Q\P)$. However, if the original data model is of the iid form $\P^n$, this hope is not justified, because one can show that 
$$\frac{1}{n}I_\theta(Q^{(n)}\P^n) \le \sup_{Q\in\mathcal Q_\alpha}I_\theta(Q\P).$$ 
Nevertheless, in what follows, we develop all our lower bound results for the larger class of $\alpha$-sequentially-interactive channels, since our efficient estimation procedure will also be of this type. We find that, indeed, the optimal asymptotic variance among all $\alpha$-sequentially-interactive procedures is given by
$$
\left[\sup_{Q\in\mathcal Q_\alpha}I_\theta(Q\P)\right]^{-1}.
$$

%=================================================================

\section{Asymptotic lower bounds for estimation in locally private models}
\label{sec:LB}

In this section we show that the sequence of experiments $\mathcal E_n = (\mathcal Z^{(n)}, \mathcal G^{(n)}, Q^{(n)}\P^n)$, $n\ge1$, is locally asymptotically mixed normal \citep[LAMN, cf.][]{Jeganathan80, Jeganathan82}, provided only that $Q^{(n)}$ is $\alpha$-sequentially-interactive and $\P$ is differentiable in quadratic mean (DQM). To be precise, we show a little bit less than that, namely, that every subsequence $n'$ has a further subsequence $n''$ along which $\mathcal E_{n''}$ is LAMN. This result will be sufficient to connect to the existing literature on convolution and local asymptotic minimax theorems for LAMN experiments. In the following subsection we begin by studying regularity properties of the marginal models $\P$ and $Q\P$, for $Q\in\mathcal Q_\alpha$. In Subsection~\ref{sec:LAMN} we then present our LAMN result and discuss its implications in Subsection~\ref{sec:ConvMinMax}.

\subsection{Regularity of locally private models}
\label{sec:reg}

Consider a statistical model $\P=(P_\theta)_{\theta\in\Theta}$ on the sample space $(\X,\mathcal F)$ with parameter space $\Theta\subseteq\R^p$. In the iid setting we consider here, the final data generating model will then simply be the $n$-fold product $\P^n=(P_\theta^n)_{\theta\in\Theta}$, and a common way to define regularity of such a model and study issues of asymptotic efficiency is by means of differentiability in quadratic mean (DQM). We recall the following classical definition.

\begin{definition}[Differentiability in Quadratic Mean]
\label{def:DQM}
A model $\P=(P_\theta)_{\theta\in\Theta}$ with $\Theta\subseteq\R^p$, sample space $(\X,\mathcal F)$ and ($\sigma$-finite) dominating measure $\mu$ is called \emph{differentiable in quadratic mean (DQM) at the point $\theta\in\Theta$}, if $\theta$ is an interior point of $\Theta$ and the $\mu$-densities $p_\theta = \frac{dP_\theta}{d\mu}$ satisfy
$$
\Psi_\theta(h) := \int_{\X} \left( \sqrt{p_{\theta+h}(x)} - \sqrt{p_\theta(x)} - \frac{1}{2} h^T s_\theta(x)\sqrt{p_\theta(x)}\right)^2 \mu(dx) \;=\; o(\|h\|^2)
$$
as $h\to0$, for some measurable vector valued function $s_\theta:\X\to\R^p$. The function $s_\theta$ is called the \emph{score} at $\theta$.
\end{definition}

Notice that this definition is independent of the choice of dominating measure $\mu$, because the quantity $\Psi_\theta(h)$ does not depend on $\mu$. To see this, simply consider two dominating measures $\mu_1$ and $\mu_2$ and express the two resulting integrals as an integral with respect to $\mu=\mu_1+\mu_2$. 

We say that a model is DQM or simply \emph{regular}, if it is DQM at every point $\theta\in\Theta$. For convenience we sometimes write $dx$ instead of $\mu(dx)$. We also recall that if the model $\P$ is DQM at $\theta$, then the score satisfies $\E_\theta[s_\theta] = 0$ and the Fisher information matrix $I_\theta(\P) := \E_\theta[s_\theta s_\theta^T]\in\R^{p\times p}$ exists and is finite \citep[see][Theorem~7.2]{vanderVaart07}. 
The cited theorem also implies that the sequence of resulting product experiments $\mathcal E_n = (\X^n, \mathcal F^n, \P^n)$ is locally asymptotically normal (LAN). 
It is known \citep[][Proposition~A.5]{Bickel93} that if $\P=(P_\theta)_{\theta\in\Theta}$ is DQM and $T:\X\to\mathcal Z$ is measurable, then the model of the transformed data $Z=T(X)$, which is given by $T\P = (P_\theta T^{-1})_{\theta\in\Theta}$, is also DQM with score $t_\theta(z) = \E[s_\theta(X)|Z=z]$, where $(X,Z) \thicksim (X,T(X))$ under $P_\theta$. For differential privacy we need to consider randomized functions $T$, or equivalently, Markov kernels. The following lemma provides the desired extension. It is a simple consequence of a more refined result that we deferred to Appendix~\ref{sec:app:LB}.

%See Proposition~A.5 of \citet{Bickel93}. Maybe combine with \citet{vonWeiz74} to get the following result more generally.

\begin{lemma} \label{lemma:DQM}
Fix $\alpha\in(0,\infty)$. Suppose the model $\P=(P_\theta)_{\theta\in\Theta}$ is DQM at $\theta\in\Theta\subseteq\R^p$, with score function $s_\theta$, and $Q\in\mathcal Q_\alpha(\X\to\mathcal Z)$ is a privacy mechanism. Then the model $Q\P=(QP_\theta)_{\theta\in\Theta}$ is also DQM at $\theta$ 
%(for every dominating measure of the form $Q(dz|x^*), x^*\in\mathcal X$) 
with score function 
$$
t_\theta(z) := \E[s_\theta(X)|Z=z],
$$
that is, a regular conditional expectation of $s_\theta(X)$ given $Z=z$, where the joint distribution of $X$ and $Z$ is determined by $X\thicksim P_\theta$ and $(Z|X=x) \thicksim Q(dz|x)$.  
Furthermore, a version of the score can be chosen such that $\sup_{z\in\mathcal Z}\|t_\theta(z)\| \le e^{2\alpha}\sqrt{\trace{I_\theta(\P)}}$ and the Fisher-Information matrix $I_\theta(Q\mathcal P)$ in the model $Q\mathcal P$ satisfies
$$
I_\theta(Q\mathcal P) = \Var[\E[s_\theta(X)|Z]] = I_\theta(\mathcal P) - \E[(s_\theta(X) - t_\theta(Z))(s_\theta(X) - t_\theta(Z))^T].
$$
\end{lemma}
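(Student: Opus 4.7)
My plan is to reduce the randomized-kernel situation to the deterministic-transformation case already invoked in the paper (Bickel et al., Proposition~A.5) by lifting to the joint distribution of $(X,Z)$. Fix a jointly measurable version of the kernel density $q(z|x):=dQ(\cdot|x)/d\nu$, which exists by $\sigma$-finiteness of $\nu$, and consider the joint model $\tilde{\mathcal P}=(\tilde P_\theta)_{\theta\in\Theta}$ on $(\mathcal X\times\mathcal Z,\mathcal F\otimes\mathcal G)$ with $\mu\otimes\nu$-density $\tilde p_\theta(x,z)=p_\theta(x)q(z|x)$. Because the kernel factor is $\theta$-free, Fubini gives
$$\int_{\mathcal X\times\mathcal Z}\left(\sqrt{\tilde p_{\theta+h}(x,z)}-\sqrt{\tilde p_\theta(x,z)}-\tfrac{1}{2} h^T s_\theta(x)\sqrt{\tilde p_\theta(x,z)}\right)^2\mu(dx)\,\nu(dz)\;=\;\Psi_\theta(h)\;=\;o(\|h\|^2),$$
so $\tilde{\mathcal P}$ is DQM at $\theta$ with score $\tilde s_\theta(x,z)=s_\theta(x)$. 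Applying the cited result to the deterministic projection $T(x,z)=z$, under which $\tilde P_\theta$ pushes forward to $QP_\theta$, then yields that $Q\mathcal P$ is DQM at $\theta$ with score $t_\theta(z)=\E[\tilde s_\theta(X,Z)|Z=z]=\E[s_\theta(X)|Z=z]$, exactly as stated.

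For the pointwise bound on $t_\theta$, I switch to a convenient dominating measure. Fix a reference point $x_0\in\mathcal X$ and set $\nu_0:=Q(\cdot|x_0)$. By the DP constraint \eqref{eq:QNI}, every $Q(\cdot|x)$ is dominated by $\nu_0$, and one can choose jointly measurable versions of $\tilde q(z|x):=dQ(\cdot|x)/d\nu_0$ satisfying $e^{-\alpha}\le\tilde q(z|x)\le e^\alpha$ for all $(x,z)$. Then $\tilde q_\theta(z):=\int\tilde q(z|y)p_\theta(y)\mu(dy)$ also lies in $[e^{-\alpha},e^\alpha]$, so the conditional density $p_\theta(x)\tilde q(z|x)/\tilde q_\theta(z)$ of $X|Z=z$ is bounded pointwise by $e^{2\alpha}p_\theta(x)$. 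Using this version of the conditional expectation, Jensen's inequality (applied twice) gives
$$\|t_\theta(z)\|\;\le\;\int\|s_\theta(x)\|\,\frac{\tilde q(z|x)}{\tilde q_\theta(z)}p_\theta(x)\,\mu(dx)\;\le\;e^{2\alpha}\E_\theta\|s_\theta(X)\|\;\le\;e^{2\alpha}\sqrt{\trace I_\theta(\mathcal P)}.$$

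The Fisher-information identity is then immediate from the law of total variance. Since $\E_\theta[s_\theta(X)]=0$ implies $\E_\theta[t_\theta(Z)]=0$, the cross terms in the expansion
$$I_\theta(\mathcal P)=\E[s_\theta(X)s_\theta(X)^T]=\E[t_\theta(Z)t_\theta(Z)^T]+\E[(s_\theta(X)-t_\theta(Z))(s_\theta(X)-t_\theta(Z))^T]$$
vanish by orthogonality of $s_\theta(X)-t_\theta(Z)$ with every $\sigma(Z)$-measurable function, and the first summand on the right equals $\Var[t_\theta(Z)]=I_\theta(Q\mathcal P)$; rearranging yields the claimed formula. The only delicate point, which I expect to be the main obstacle, is measure-theoretic rather than probabilistic: one must select jointly measurable versions of $q$, $\tilde q$ and the conditional expectation $t_\theta$ that simultaneously realize the everywhere-pointwise bound on $\tilde q$ and give a version of $t_\theta$ satisfying the claimed sup-norm estimate. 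This is exactly what the more refined appendix lemma deferred by the authors is set up to deliver.
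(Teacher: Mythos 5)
Your proof is correct and takes a genuinely different route from the paper's. The paper establishes DQM of $Q\P$ via the quantitative Lemma~\ref{lemma:DQMbound}, which directly bounds the DQM deficit of $Q\P$ in terms of $\Psi_\theta(h)$ with explicit $\alpha$-dependent constants, leaning on the pointwise bounds $q(z|x)\in[e^{-\alpha},e^\alpha]$ throughout. You instead lift to the joint model $\tilde\P$ on $\X\times\Z$, observe that because the kernel factor is $\theta$-free the joint DQM deficit equals $\Psi_\theta(h)$ exactly by Fubini, and then invoke the Bickel--Klaassen--Ritov--Wellner result (Proposition~A.5, already cited in Section~\ref{sec:reg}) for the deterministic projection $(x,z)\mapsto z$. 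Your route is more modular and, for the DQM assertion alone, strictly more general: it works for any Markov kernel admitting a jointly measurable dominated density, with the privacy constraint entering only in the pointwise bound on $t_\theta$. What the paper's explicit computation buys is the quantitative error terms themselves, which are reused in the proof of Theorem~\ref{thm:LAMN} to control $\E_{n,\theta}[U_i^2\mid Z_{1:i-1}]$ uniformly across the stages of a sequentially interactive mechanism — your qualitative argument cannot be substituted there. One small imprecision worth noting: the existence of a jointly measurable version of $q(z|x)$ does not follow from $\sigma$-finiteness of $\nu$ alone, but from the Markov-kernel structure of $Q$ together with $\sigma$-finiteness via a Radon--Nikodym derivative on the product space (this is precisely what Lemma~\ref{lemma:Qdensities} supplies); you correctly flag the version-selection issue, and the appendix indeed resolves it. Your remaining steps — the $e^{2\alpha}$ bound via switching to $\nu_0=Q(\cdot|x_0)$ and the law-of-total-variance identity — coincide with the paper's.
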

\begin{proof}
Let $q:\mathcal Z\times\X\to[e^{-\alpha}, e^\alpha]$ be as in Lemma~\ref{lemma:Qdensities} in the supplement. Now apply Lemma~\ref{lemma:DQMbound} with fixed, $z\in\mathcal Z$, $P_0 := P_\theta$, $c:= \E_{P_\theta}[\|s_\theta\|^2] = \trace{I_\theta(\P)}<\infty$, $P_1:=P_{\theta+h}$, and $t_\theta(z) := \int_\X s_\theta(x)\frac{q(z|x)p_0(x)}{q_0(z)} \mu(dx)$, for sufficiently small $\|h\|$, such that $\theta+h\in\Theta$ and $\|h\|\sqrt{c}< e^{-2\alpha}$. Notice that the resulting upper bound in Lemma~\ref{lemma:DQMbound} is $o(\|h\|^2)$ uniformly in $z\in\mathcal Z$, which (together with the fact that $\nu$ is a probability measure) implies DQM of $Q\P$ with score function $t = t_\theta(z)$. Since $q(z|x)p_\theta(x)$ is a $\nu\otimes\mu$-density of $(Z,X)$, we see that the score is a version of the conditional expectation, as claimed. Appropriate boundedness of the score follows from
$$
\left\|\int_\X s_\theta(x)\frac{q(z|x)p_\theta(x)}{\int_\X q(z|y)p_\theta(y)\mu(dy)} \mu(dx)\right\|
\le e^{2\alpha} \int_\X \|s_\theta(x)\| p_\theta(x) \mu(dx) \le e^{2\alpha} \sqrt{\E_\theta[\|s_\theta\|^2]},
$$
using the point-wise bounds on $q(z|x)$. The representation of $I_\theta(Q\mathcal P)$ is an immediate consequence of the variance decomposition formula
$
I_\theta(\mathcal P) = \Var[s_\theta(X)] = \Var[\E[s_\theta(X)|Z]] + \E[\Var[s_\theta(X)|Z]].
$
\end{proof}

\subsection{LAMN along subsequences}
\label{sec:LAMN}

Recall from the discussion at the end of Section~\ref{sec:Examples} that we need to work with sequentially interactive mechanisms $Q^{(n)}$ in order to have any hope of achieving the asymptotic variance $[\sup_{Q\in\mathcal Q_\alpha} I_\theta(Q\P)]^{-1}$ in general. This, however, requires us to consider dependent and possibly non-identically distributed sanitized observations $Z_1,\dots, Z_n$. The corresponding model $Q^{(n)}\P^n$ will therefore typically no longer be LAN. The necessary generalization is given by the notion of local asymptotic mixed normality \citep[LAMN, cf.][]{Jeganathan80, Jeganathan82}. For a given parameter space $\Theta\subseteq \R^p$, let $\mathcal E_n=(\Omega_n,\mathcal A_n, \{R_{n,\theta}:\theta\in\Theta\})$, $n\ge1$, be a sequence of statistical experiments, where $R_{n,\theta}$ is a probability measure on the measurable space $(\Omega_n, \mathcal A_n)$. For $\theta,\theta_0\in\Theta$, define the associated log-likelihood ratio $\Lambda_n(\theta,\theta_0) := \log \frac{dR_{n,\theta}}{dR_{n,\theta_0}}$, where $\frac{dR_{n,\theta}}{dR_{n,\theta_0}}$ is the Radon-Nikodym derivative of the absolutely continuous part of $R_{n,\theta}$ with respect to $R_{n,\theta_0}$ and set $\Lambda_n(\theta,\theta_0) := \Lambda_n(\theta_0,\theta_0) = 0$ if $\theta\notin\Theta$.

\begin{definition}[Local Asymptotic Mixed Normality]
\label{def:LAMN}
Fix a parameter space $\Theta\subseteq\R^p$ and a sequence $(\delta_n)_{n\in\N}$ of positive definite $p\times p$ matrices with $\|\delta_n\|\to0$ as $n\to\infty$. We say the sequence of statistical experiments $\mathcal E_n = (\Omega_n, \mathcal A_n, \{R_{n,\theta}:\theta\in\Theta\})$ satisfies the LAMN condition at $\theta\in\Theta$, if $\theta$ is an interior point of $\Theta$ and the following two conditions hold:
\begin{enumerate}
\item There exists a sequence $(\Delta_{n,\theta})_{n\in\N}$ of random vectors $\Delta_{n,\theta} : \Omega_n \to\R^p$ and a sequence $(\Sigma_{n,\theta})_{n\in\N}$ of random nonnegative-definite matrices $\Sigma_{n,\theta}: \Omega_n \to\R^{p\times p}$, such that 
%$R_{n,\theta}(\Sigma_{n,\theta} \text{ is p.d.})=1$, for every $n\in\N$, and
$$
\Lambda_n(\theta+\delta_n h, \theta) - \left( h^T \Delta_{n,\theta} - \frac12 h^T\Sigma_{n,\theta}h \right)
$$
converges to zero in $R_{n,\theta}$ probability for every $h\in\R^p$.
\item We have
$$
\begin{pmatrix}
\Delta_{n,\theta}\\
\Sigma_{n,\theta}
\end{pmatrix} \; \stackrel{R_{n,\theta}}{\rightsquigarrow}\; 
\begin{pmatrix}
\Sigma_{\theta}^{1/2}\Delta\\ \Sigma_{\theta}\end{pmatrix},
$$
where $\Delta \thicksim N(0,J_p)$ is independent of $\Sigma_\theta$.
\end{enumerate}
\end{definition}

If the limiting matrix $\Sigma_\theta$ in the above definition is non-random, the LAMN property reduces to the more classical notion of local asymptotic normality (LAN). It is a classical result \citep[cf.][Theorem~7.2]{vanderVaart07} that if the model $\mathcal P=(P_\theta)_{\theta\in\Theta}$ on sample space $\X$ is DQM at $\theta\in\Theta$, then the sequence of experiments $\mathcal E_n=(\X^n,\mathcal F^n, \{P_\theta^n:\theta\in\Theta\})$ is LAN at $\theta$ with $\delta_n = J_p/\sqrt{n}$ and $\Sigma_\theta$ equal to the Fisher-Information matrix $I_\theta(\mathcal P)$, which is non-random. An immediate consequence of that observation is the following simple contiguity result that holds for any sequence $Q^{(n)}$ of Markov kernels from $(\X^n,\mathcal F^n)$ to $(\mathcal Z_n,\mathcal G_n)$. 
%We write $R_{n,\theta} = Q^{(n)}P_\theta^n$ for the joint distribution of the observed data $Z_1,\dots, Z_n$ and $\E_{n,\theta}$ for the corresponding expectation operator.

\begin{lemma}\label{lemma:contiguity}
Consider a model $\P$ on sample space $(\X,\mathcal F)$ that is DQM at $\theta\in\Theta\subseteq\R^p$. Fix $h\in\R^p$ and an arbitrary Markov kernel $Q^{(n)}$ from $(\X^n,\mathcal F^n)$ to $(\mathcal Z_n,\mathcal G_n)$. Then $R_{n,\theta} = Q^{(n)}P_\theta^n$ and $R_{n,\theta+h/\sqrt{n}}= Q^{(n)}P_{\theta+h/\sqrt{n}}^n$ are mutually contiguous.
\end{lemma}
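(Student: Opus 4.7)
The plan is to reduce the claim to two well-known facts: first, that DQM of $\P$ at $\theta$ gives mutual contiguity of the iid product measures $P_\theta^n$ and $P_{\theta+h/\sqrt{n}}^n$; and second, that applying an arbitrary Markov kernel (i.e., post-processing) preserves mutual contiguity. No interactive/non-interactive structure of $Q^{(n)}$ is used, only that it is a Markov kernel on the product space.

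For the first fact I would simply cite Theorem~7.2 in \citet{vanderVaart07}, which states that DQM at $\theta$ makes $\mathcal E_n=(\X^n,\mathcal F^n,\P^n)$ LAN at $\theta$, so by Le~Cam's first lemma the log-likelihood ratio $\log(dP_{\theta+h/\sqrt n}^n/dP_\theta^n)$ is uniformly tight under $P_\theta^n$ and converges to a normal random variable with mean $-\tfrac12 h^T I_\theta(\P) h$ and variance $h^T I_\theta(\P) h$; in particular, $(P_\theta^n)$ and $(P_{\theta+h/\sqrt n}^n)$ are mutually contiguous.

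For the second fact, take any sequence of sets $A_n\in\mathcal G_n$ with $R_{n,\theta}(A_n)\to 0$ and set $f_n(x) := Q^{(n)}(A_n|x)\in[0,1]$. Then
$$
\int_{\X^n} f_n\,dP_\theta^n \;=\; R_{n,\theta}(A_n) \;\longrightarrow\; 0,
$$
so by Markov's inequality $f_n\to 0$ in $P_\theta^n$-probability. Contiguity of $(P_\theta^n)$ and $(P_{\theta+h/\sqrt n}^n)$ then transfers convergence in probability (this is the standard reformulation of contiguity for bounded sequences), hence $f_n \to 0$ in $P_{\theta+h/\sqrt n}^n$-probability, and bounded convergence yields
$$
R_{n,\theta+h/\sqrt n}(A_n) \;=\; \int_{\X^n} f_n\,dP_{\theta+h/\sqrt n}^n \;\longrightarrow\; 0.
$$
The reverse implication is obtained by swapping the roles of $\theta$ and $\theta+h/\sqrt n$ in the same argument, which is legitimate since the underlying product contiguity is symmetric.

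There is no real obstacle here: the only nontrivial input is Le~Cam's first lemma, which is already available from the DQM assumption, and the post-processing step is an entirely elementary consequence of the boundedness $0\le Q^{(n)}(A|\cdot)\le 1$. In fact, the same reasoning shows more generally that if $\mu_n$ and $\nu_n$ are mutually contiguous probability measures on $(\X^n,\mathcal F^n)$ and $K_n$ is any Markov kernel from $(\X^n,\mathcal F^n)$ to some measurable space, then $K_n\mu_n$ and $K_n\nu_n$ are mutually contiguous; the lemma is just this data-processing principle applied to the product experiment.
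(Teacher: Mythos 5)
Your proof is correct and follows essentially the same route as the paper's: deduce mutual contiguity of $P_\theta^n$ and $P_{\theta+h/\sqrt n}^n$ from DQM/LAN, then transfer it through $Q^{(n)}$ by setting $T_n(x):=Q^{(n)}(A_n|x)\in[0,1]$, converting $L^1$-convergence to convergence in probability, applying contiguity, and using boundedness to convert back. The only cosmetic difference is that the paper cites Example~6.5 of \citet{vanderVaart07} directly for the product-measure contiguity, whereas you derive it from Theorem~7.2 plus Le Cam's first lemma.
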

\begin{proof}
Write $\theta_n := \theta+h/\sqrt{n}$. The LAN property of $\P$ implies $P_{\theta_n}^n \vartriangleleft\vartriangleright P_{\theta}^n$ \citep[][Example~6.5]{vanderVaart07}. Now fix a sequence of events $A_n\in\mathcal G^{(n)}$ such that $R_{n,\theta}(A_n) \to 0$. In particular, for $T_n(x) := Q^{(n)}(A_n|x)\in[0,1]$, we have $\E_{P_\theta^n}[T_n] = R_{n,\theta}(A_n) \to 0$, or, equivalently, $T_n \to 0$ in $P_\theta^n$-probability. But by contiguity we get $T_n \to 0$ in $P_{\theta_n}^n$-probability, which implies $R_{n,\theta_n}(A_n) = \E_{P_{\theta_n}^n}[T_n] \to 0$. Reversing the roles of $\theta$ and $\theta_n$ finishes the proof.
\end{proof}

Now, for $n\in\N$, consider an $\alpha$-sequentially-interactive privacy mechanism $Q^{(n)}$ from $\X^n$ to $(\Z^{(n)},\mathcal G^{(n)})$, and a (marginal) model $\P= (P_\theta)_{\theta\in\Theta}$ on $\X$. The statistical experiment generating the sanitized observations $Z_1,\dots, Z_n$ is therefore given by $\mathcal E_n = (\Z^{(n)}, \mathcal G^{(n)}, \{Q^{(n)}P_\theta^n:\theta\in\Theta\})$. The following theorem shows that this sequence of experiments possesses the LAMN property, provided that the underlying data generating model $\P$ is DQM and an appropriate sequence of nonnegative-definite matrices $\Sigma_{n,\theta}$ has a stochastic (weak) limit $\Sigma_\theta$. 
\begin{theorem}\label{thm:LAMN}
Fix $\alpha>0$ and for each $n\in\N$, let $Q^{(n)}$ be $\alpha$-sequentially-interactive \eqref{eq:Seq} from $(\X^n,\mathcal F^n)$ to $(\mathcal Z^{(n)}, \mathcal G^{(n)})$ and assume that all the $\sigma$-fields $\mathcal F$, $\mathcal G_i$, $i=1,\dots, n$, are countably generated. Suppose the model $\P=(P_\theta)_{\theta\in\Theta}$ on sample space $(\X,\mathcal F)$ is DQM at $\theta\in\Theta\subseteq\R^p$ with score $s_\theta$ and define
$$
\Sigma_{n,\theta}(z_{1:n-1}) := \frac{1}{n} \sum_{i=1}^n I_\theta(Q_{z_{1:i-1}}\mathcal P), \quad z_{1:n-1}=(z_1,\dots, z_{n-1})\in\mathcal Z^{(n-1)}.
$$
Then $z_{1:i-1}\mapsto I_\theta(Q_{z_{1:i-1}}\P)$ is measurable and there exists a measurable function $z_{1:i}\mapsto  t_{i,\theta}(z_i|z_{1:i-1})$ such that for all $z_{1:i-1}\in\mathcal Z^{(i-1)}$, $z\mapsto t_{i,\theta}(z|z_{1:i-1})$ is a score in the model $Q_{z_{1:i-1}}\P$ at $\theta$.

If $\Sigma_{n,\theta}\stackrel{Q^{(n)}P^n_{\theta}}{\rightsquigarrow}\Sigma_\theta$ for some random symmetric $p\times p$ matrix $\Sigma_\theta$ and $I_\theta(\P)\neq0$, then the sequence $\mathcal E_n = (\Z^{(n)}, \mathcal G^{(n)}, \{Q^{(n)}P_\theta^n:\theta\in\Theta\})$, $n\in\N$, is LAMN at $\theta$ with $\delta_n = J_p/\sqrt{n}$, $\Sigma_{n,\theta}$ as defined above and
$$
\Delta_{n,\theta}(z_{1:n}) = \frac{1}{\sqrt{n}} \sum_{i=1}^n t_{i,\theta}(z_i|z_{1:i-1}).
$$
\end{theorem}

The regularity assumptions of Theorem~\ref{thm:LAMN} are natural. The only real technical restriction that is new compared to the classical theory is the separability of the $\sigma$-fields. We need it to show existence of a score $t_{i,\theta}$ in the conditional model $Q_{z_{1:i-1}}\P$ that is also jointly measurable in $z_1,\dots, z_i$. Furthermore, the weak convergence of $\Sigma_{n,\theta}$ may seem restrictive. It is clearly a necessary condition for LAMN and it may indeed fail if we choose a very erratic sequence $Q^{(n)}$ (see Example~\ref{ex:noWeakConv} in the supplement). Notice, however, that in view of $0\preccurlyeq I_\theta(Q_{z_{1:i-1}}\mathcal P) \preccurlyeq I_\theta(\P)$ (cf. Lemma~\ref{lemma:DQM}), the sequence $(\Sigma_{n,\theta})_{n\in\N}$ is bounded and hence tight. Thus, by Prokhorov's theorem, for every subsequence $n'$ there is a further subsequence $n''$ such that $\Sigma_{n'',\theta}$ is weakly convergent and Theorem~\ref{thm:LAMN} yields that $(\mathcal E_{n''})_{n''\in\N}$ is LAMN at $\theta$. We will show next that this LAMN property along subsequences is all we need to derive asymptotic lower bound results.

\begin{remark}\normalfont To prove Theorem~\ref{thm:LAMN}, one could try to use sufficient conditions for the LAMN property that are available in the literature. For example, to establish the representation of the log-likelihood-ratio $\Lambda_n$ in part~1 of Definition~\ref{def:LAMN}, one could essentially follow \citet{Jeganathan82}, who also operates under a DQM framework. However, checking those assumptions in our privacy framework is hardly any easier than directly proving the required property of $\Lambda_n$. Furthermore, to establish the weak convergence in part~2 of Definition~\ref{def:LAMN}, \citet{Jeganathan82} (who relies on the martingale CLT of \citet{Hall77}) requires that $\Sigma_{n,\theta}\to \Sigma_\theta$ in probability (on some extended probability space on which we can define $\Sigma_\theta$). However, it is not clear if this stronger assumption can be satisfied (at least along subsequences) in the present setting of $\alpha$-sequentially interactive privacy without restricting the class of possible privacy mechanisms. 
In contrast, \citet{Basawa83} use a quite elegant direct argument to establish part~2 of Definition~\ref{def:LAMN}, requiring only weak convergence of $\Sigma_{n,\theta}$, but they work with classical differentiability assumptions on the densities $p_\theta$ rather than with DQM. We therefore decided to provide a direct and fully self-contained proof of both parts of Definition~\ref{def:LAMN}, borrowing ideas from \citet{Jeganathan82} for the first part and ideas from \citet{Basawa83} for the second part. In particular, following \citet{Basawa83}, we use moment generating functions to establish the joint weak convergence of $(\Delta_{n,\theta},\Sigma_{n,\theta})$, which is possible here because $\alpha$-differential privacy imposes convenient boundedness properties. This makes for a much shorter and more elegant argument than checking assumptions of an appropriate martingale CLT; see Appendix~\ref{sec:app:LB} for the proof of Theorem~\ref{thm:LAMN}.
\end{remark}

\subsection{A private convolution theorem}
\label{sec:ConvMinMax}

In this subsection we provide a private version of a convolution theorem, which shows that any regular $\alpha$-sequentially interactive estimation procedure for $\theta\in\Theta\subseteq\R^p$ has an asymptotic distribution $D_\theta$ which is given as an (inverse) scale-mixture of the convolution of a centered Gaussian with some other distribution $L$. Here, the mixing measure is the distribution of any weak accumulation point of the sequence $(\Sigma_{n,\theta})_{n\in\N}$ defined in Theorem~\ref{thm:LAMN}. After the proof we explain how this result can be used in the case $p=1$ for determining the most concentrated limiting distribution. A regular private estimation procedure reaching that limit will thus be called asymptotically efficient. Recall that a $\alpha$-sequentially interactive estimation procedure for $\theta\in\Theta\subseteq\R^p$ consists of a sequence $Q^{(n)}$ of $\alpha$-sequentially-interactive privacy mechanisms as in \eqref{eq:Seq} from $(\X^n,\mathcal F^n)$ to $(\mathcal Z^{(n)}, \mathcal G^{(n)})$ as well as of a sequence of estimators $\hat{\theta}_n:\mathcal Z^{(n)}\to\R^p$.

\begin{theorem}\label{thm:convolution}
Fix $\alpha>0$ and for each $n\in\N$, let $Q^{(n)}$ be $\alpha$-sequentially-interactive \eqref{eq:Seq} from $(\X^n,\mathcal F^n)$ to $(\mathcal Z^{(n)}, \mathcal G^{(n)})$ and assume that all the $\sigma$-fields $\mathcal F$, $\mathcal G_i$, $i=1,\dots, n$ are countably generated. Suppose the model $\P=(P_\theta)_{\theta\in\Theta}$ on sample space $(\X,\mathcal F)$ is DQM at $\theta\in\Theta\subseteq\R^p$ and let $\Sigma_{n,\theta}$ be defined as in Theorem~\ref{thm:LAMN}. Consider the joint distribution of the sanitized data $R_{n,\theta}:=Q^{(n)}P_\theta^n$ and suppose that for every $n\in\N$, $R_{n,\theta}(\Sigma_{n,\theta} \text{ is positive definite} )=1$ and every weak accumulation point $\Sigma_\theta$ of $(\Sigma_{n,\theta})_{n\in\N}$ under $R_{n,\theta}$ is almost surely positive definite. Let $\hat{\theta}_n:\mathcal Z^{(n)}\to\R^p$ be an estimator sequence and $D_\theta$ a random $p$-vector, such that
\begin{equation}\label{eq:regularEst}
\sqrt{n}\left(\hat{\theta}_n - [\theta+h/\sqrt{n}]\right) \stackrel{R_{n,\theta+h/\sqrt{n}}}{\rightsquigarrow} D_\theta, \quad\forall h\in\R^p.
\end{equation}
Then the limiting distribution $D_\theta$ admits the representation
$$
P(D_\theta\in A) = \int_{\R^{p\times p}} \left[N(0,\sigma^{-1})\star L_{\theta}(\sigma)\right](A) P_{\Sigma_\theta}(d\sigma), \quad\forall A\in\mathcal B(\R^p),
$$
where $P_{\Sigma_\theta}$ is the distribution of any weak accumulation point $\Sigma_\theta$, $\{L_{\theta}(\sigma):\sigma\in\R^{p\times p}\}$ is a family of probability distributions on $\R^p$ possibly depending on $P_{\Sigma_\theta}$ and $\star$ denotes convolution.
\end{theorem}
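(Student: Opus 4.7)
My plan is to reduce the statement to the classical convolution theorem for LAMN experiments \citep{Jeganathan82} by passing to convergent subsequences of $\Sigma_{n,\theta}$. First, Lemma~\ref{lemma:DQM} gives $0 \preccurlyeq I_\theta(Q_{z_{1:i-1}}\P) \preccurlyeq I_\theta(\P)$ for every conditional marginal kernel, so the sequence $(\Sigma_{n,\theta})_{n\in\N}$ is uniformly bounded and hence tight under $R_{n,\theta}$. By Prokhorov's theorem, every subsequence admits a further subsequence $n''$ along which $\Sigma_{n'',\theta}\rightsquigarrow \Sigma_\theta$ for some weak accumulation point $\Sigma_\theta$, which by assumption is almost surely positive definite.

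Fix such a subsequence $n''$ together with its weak limit $\Sigma_\theta$. Theorem~\ref{thm:LAMN} then applies and shows that $(\mathcal E_{n''})$ is LAMN at $\theta$ with rate $\delta_{n''}=J_p/\sqrt{n''}$, central sequence $\Delta_{n'',\theta}$, and limiting random information matrix $\Sigma_\theta$. Restricting the regularity condition~\eqref{eq:regularEst} to this subsequence yields
$$
\sqrt{n''}\bigl(\hat\theta_{n''} - [\theta + h/\sqrt{n''}]\bigr) \stackrel{R_{n'',\theta+h/\sqrt{n''}}}{\rightsquigarrow} D_\theta,\quad \forall h\in\R^p,
$$
with limit law independent of $h$. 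This is exactly the regularity hypothesis required for the convolution theorem in the LAMN setting.

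Invoking that theorem (see, e.g., \citet{Jeganathan82} or \citet{Hopfner14}), $D_\theta \stackrel{d}{=} \Sigma_\theta^{-1/2}Z + W$, where, conditionally on $\Sigma_\theta$, the vector $Z\thicksim N(0,J_p)$ is independent of $\Sigma_\theta$ and $W$ is a further random vector whose conditional law given $\Sigma_\theta=\sigma$ we denote by $L_{\Sigma_\theta}(\sigma)$. Disintegrating the joint distribution of $(W,\Sigma_\theta)$ and integrating out $\Sigma_\theta\thicksim P_{\Sigma_\theta}$ produces the mixture-of-convolutions formula in the statement. Because the construction can be repeated for every weak accumulation point, the representation holds for any such $P_{\Sigma_\theta}$, as claimed.

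The main obstacle will be verifying that the hypotheses of Jeganathan's convolution theorem transfer cleanly to the subsequence regime: mutual contiguity of $R_{n'',\theta}$ and $R_{n'',\theta+h/\sqrt{n''}}$ (needed to make sense of the shifted log-likelihoods) is provided by Lemma~\ref{lemma:contiguity}, while almost-sure positive definiteness of the limit $\Sigma_\theta$ --- explicitly assumed --- is what makes $\Sigma_\theta^{-1/2}$ well-defined and hence the canonical form of $D_\theta$ meaningful on a set of full probability. Beyond this, the argument is mostly packaging: once LAMN holds along $n''$ and regularity of $\hat\theta_n$ transfers by restriction, the classical LAMN machinery delivers the representation.
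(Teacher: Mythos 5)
Your approach is structurally the same as the paper's --- extract subsequences via boundedness/Prokhorov, upgrade to LAMN via Theorem~\ref{thm:LAMN}, and invoke the LAMN convolution theorem from \citet{Hopfner14} (or \citet{Jeganathan82}) --- but you skip a step that the convolution theorem actually requires. The regularity hypothesis in Höpfner's Theorem~7.10(b) (his condition~7.8(b)) is not merely that $\sqrt{n''}(\hat\theta_{n''}-\theta_{n''})\rightsquigarrow D_\theta$ under $R_{n'',\theta_{n''}}$; it is the \emph{joint} weak convergence
$$
\bigl(\Sigma_{n'',\theta},\ \sqrt{n''}(\hat\theta_{n''}-\theta_{n''})\bigr)\ \stackrel{R_{n'',\theta_{n''}}}{\rightsquigarrow}\ (\Sigma_\theta, D_\theta),
$$
because the conditional decomposition $D_\theta \stackrel{d}{=}\Sigma_\theta^{-1/2}Z + W$ is a statement about the joint law of $(\Sigma_\theta, D_\theta)$, not about their marginals. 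You establish the marginal limit of $\Sigma_{n''}$ under $R_{n'',\theta}$ and the marginal limit of the standardized estimator under the local alternatives, and then write down the disintegration formula as if the joint law were already in hand. It isn't, and this is a genuine gap: having two marginals determines no particular joint distribution.

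The paper closes this gap by working under the local alternative $R_{n,\theta+h/\sqrt{n}}$ from the start: both $\Sigma_n$ and $D_n$ are (marginally) tight there, marginal tightness gives joint tightness, and Prokhorov then yields a further subsequence along which $(\Sigma_{n'},D_{n'})$ converges jointly to some pair $(\Sigma_\theta,D_\theta)$ with the correct marginals (uniqueness of the $D_\theta$-marginal being guaranteed by \eqref{eq:regularEst}); contiguity from Lemma~\ref{lemma:contiguity} then ports the $\Sigma_{n'}$-limit back to $R_{n',\theta}$ so that Theorem~\ref{thm:LAMN} applies, and Theorem~7.10(b) of \citet{Hopfner14} finishes. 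To repair your version you should extract the subsequence for the \emph{pair} $(\Sigma_n, D_n)$ under $R_{n,\theta_n}$ rather than for $\Sigma_n$ alone under $R_{n,\theta}$, and explicitly verify condition~7.8(b). The rest of your argument (contiguity, the use of positive definiteness to make the definition of LAMN coincide with Höpfner's, the interpretation of the disintegrated convolution formula) is sound.
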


\begin{proof}
Fix $h\in\R$. Notice that any subsequence of $$D_n := D_{n,\theta+h/\sqrt{n}} := \sqrt{n}\left(\hat{\theta}_n - [\theta+h/\sqrt{n}]\right)$$ under $R_{n,\theta+h/\sqrt{n}}$ has limiting distribution equal to $D_\theta$. By boundedness of $\Sigma_n := \Sigma_{n,\theta}$, the sequence $(\Sigma_n)_n$ is tight under $R_{n,\theta+h/\sqrt{n}}$. Since marginal tightness implies joint tightness, also $(\Sigma_n,D_n)_n$ is tight under $R_{n,\theta+h/\sqrt{n}}$. Thus,
$$
(\Sigma_{n'},D_{n'}) \stackrel{R_{n',\theta+h/\sqrt{n'}}}{\rightsquigarrow} (\Sigma_\theta, D_\theta),
$$
along a subsequence $(n')$. This verifies the regularity condition~7.8(b) of \citet{Hopfner14}. By contiguity (cf. Lemma~\ref{lemma:contiguity}) we also have $$\Sigma_{n'} \stackrel{R_{n',\theta}}{\rightsquigarrow} \Sigma_\theta.$$ Because $\Sigma_{n,\theta} \preccurlyeq I_\theta(\P)$ and the lower bound is positive definite (a.s.), we have $I_\theta(\P)\neq 0$ and Theorem~\ref{thm:LAMN} shows that $\mathcal E_{n'} = (\Z^{(n')}, \mathcal G^{(n')}, \{Q^{(n')}P_\theta^{n'}:\theta\in\Theta\})$, $n'\in\N$, is LAMN at $\theta$. In view of the positive definiteness assumptions our definition of LAMN coincides with the one of \citet{Hopfner14}. The proof is now finished by an application of Theorem~7.10(b) of \citet{Hopfner14}. 
\end{proof}

Now consider the case $p=1$. Notice that since $\Sigma_{n,\theta}\le \sup_{Q\in\mathcal Q_\alpha}I_\theta(Q\P)$ we must have $P(\Sigma_\theta\le \sup_{Q\in\mathcal Q_\alpha}I_\theta(Q\P))=1$, for every weak accumulation point $\Sigma_\theta$, and thus the most concentrated limiting distribution in the sense of Anderson's Lemma \citep[see, e.g., Corollary~5.7 in][]{Hopfner14} is
$$
N\left(0,\left[\sup_{Q\in\mathcal Q_\alpha}I_\theta(Q\P)\right]^{-1}\right).
$$
Hence, a regular estimator, in the sense of \eqref{eq:regularEst}, achieving this limiting distribution is called efficient. In Section~\ref{sec:MLE} below (see especially Theorem~\ref{thm:MLEefficiency}), we present such an $\alpha$-sequentially interactive estimation procedure. Notice that even though we allow for any sequentially interactive mechanism $Q^{(n)}$, the minimal asymptotic variance $\left[\sup_{Q\in\mathcal Q_\alpha}I_\theta(Q\P)\right]^{-1}$ is obtained by optimizing only over non-interactive mechanisms. Thus, we can not asymptotically beat non-interactive mechanisms  by using a sequential one. However, this does not necessarily mean that there is always an efficient non-interactive mechanism, because a maximizer of $Q\mapsto I_\theta(Q\P)$ will typically depend on the unknown $\theta$ and we need some interaction to estimate it.

In a similar way, one can derive an almost sure version of the convolution theorem, holding for Lebesgue--almost all $\theta\in\Theta$ and avoiding the regularity assumption \eqref{eq:regularEst}. Moreover, one can derive a local asymptotic minimax theorem. See Theorem~2 of \citet{Jeganathan81} and Theorem~7.12 of \citet{Hopfner14} for the corresponding classical results requiring only the LAMN property.

\begin{remark}\normalfont
Notice that we consider the case $p=1$ only in order for the maximization of Fisher-Information
$$
\sup_{Q\in\mathcal Q_\alpha}I_\theta(Q\P),
$$
or the equivalent minimization of asymptotic variance
$$
\inf_{Q\in\mathcal Q_\alpha}\left[I_\theta(Q\P)\right]^{-1} = \left[\sup_{Q\in\mathcal Q_\alpha}I_\theta(Q\P)\right]^{-1},
$$
to be well defined. In case of general $p\in\N$, it would be desirable to minimize $Q\mapsto \left[I_\theta(Q\P)\right]^{-1}\in\R^{p\times p}$ in the partial Loewner ordering. However, we currently don't know if there even exists such a minimal element. Of course, one can always fix a functional of interest $\eta:\R^{p\times p}\to\R$, such as the determinant or the trace, and try to solve 
$$
\inf_{Q\in\mathcal Q_\alpha} \eta \left(\left[I_\theta(Q\P)\right]^{-1}\right).
$$
We will consider this and other extensions elsewhere.
\end{remark}

%=================================================================================
%================================= attainability =======================================

\section{Efficiency of the private two-step MLE}
\label{sec:MLE}

In this section, at least in the single parameter case $p=1$, we develop an $\alpha$-sequentially interactive procedure that is regular and achieves the asymptotically optimal variance obtained in Subsection~\ref{sec:ConvMinMax}. In other words, we present an efficient differentially private estimation procedure. As explained in the introductory example Section~\ref{sec:Examples}, we rely on a two-step sample splitting procedure where the first (much smaller) part of the data is used in a differentially private way to consistently estimate (not necessarily at $\sqrt{n}$-rate) the unknown parameter $\theta\in\Theta\subseteq\R$. Based on this preliminary estimate, the second (larger) part of the data is then used to construct an $\alpha$-private MLE that has minimal asymptotic variance.

\subsection{Consistent non-interactive $\alpha$-private estimation}
\label{sec:consistency}

An important part of our efficient two-step procedure is a consistent and private preliminary estimator. We now present two different ways for how to obtain such consistent $\alpha$-private estimators. Throughout this subsection, we consider a parametric model $\mathcal P=(P_\theta)_{\theta\in\Theta}$, with parameter space $\Theta\subseteq\R^p$ and sample space $(\X,\mathcal F)$, that is dominated by $\mu$ with corresponding densities $p_\theta$.

\subsubsection{Maximum Likelihood}

Fix a (marginal) channel $Q\in\mathcal Q_\alpha(\X)$, pick $\nu(dz) := Q(dz|x^*)$ and $q(z|x)$ as in Lemma~\ref{lemma:Qdensities} and write $q_\theta(z) = \int_\X q(z|x)p_\theta(x)\mu(dx)$ for $\nu$-densities of the model $Q\mathcal P$. In particular, we have $q_\theta(z)\in[e^{-\alpha},e^\alpha]$. The log-likelihood of the observed (non-interactively) sanitized data $Z_1,\dots, Z_n$ is given by
$$
\theta\mapsto \ell_n(\theta) := \ell_n(\theta; z_1,\dots, z_n) := \sum_{i=1}^n \log q_\theta(z_i),
$$
and we define the corresponding non-interactive $\alpha$-private MLE $\hat{\theta}_n$ to be a measurable maximizer.
For ease of reference, we first recall Wald's classical consistency result as presented in \citet[][Theorem~5.14]{vanderVaart07}.
\begin{theorem}\label{thm:vdVConsistency}
Fix $\theta_0\in\Theta\subseteq\R^p$. Let $\theta\mapsto \log p_\theta(x)$ be upper-semicontinuous on $\Theta$ for $P_{\theta_0}$ almost all $x\in\X$ and suppose that for every sufficiently small ball $U\subseteq\Theta$ the function $x\mapsto \sup_{\theta\in U} \log p_\theta(x)$ is measurable and satisfies
$$
\E_{\theta_0}\left[\sup_{\theta\in U} \log p_\theta\right] <\infty.
$$
If $\theta\mapsto \E_{\theta_0} [\log p_\theta]$ has a unique maximizer at $\theta_0$, then any estimator $\hat{\theta}_n$ that maximizes the log-likelihood $\theta\mapsto \sum_{i=1}^n \log p_\theta(x_i)$ satisfies
$$
P_{\theta_0}^n(\|\hat{\theta}_n-\theta_0\|_2>\eps \land \hat{\theta}_n\in K)\xrightarrow[n\to\infty]{} 0,
$$
for every $\eps>0$ and every compact set $K\subseteq\Theta$.
\end{theorem}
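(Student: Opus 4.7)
The plan is to follow the classical Wald argument, reducing the compact bad set $K_\varepsilon := K \cap \{\theta\in\Theta:|\theta-\theta_0|\ge\varepsilon\}$ to a finite cover of small neighborhoods on which the sample log-likelihood can be uniformly beaten by the value at $\theta_0$. The conclusion then follows because $\hat{\theta}_n$ maximizes the log-likelihood by definition.

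First I would set up the key one-point reduction. Fix $\theta_1\in K_\varepsilon$. Taking a nested sequence of open neighborhoods $U_k\downarrow\{\theta_1\}$ eventually contained in a sufficiently small interval $U$ around $\theta_1$, upper-semicontinuity of $\theta\mapsto \log p_\theta(x)$ yields
$$
\limsup_{k\to\infty}\sup_{\theta\in U_k}\log p_\theta(x)\;\le\;\log p_{\theta_1}(x)
$$
for $P_{\theta_0}$-a.e.\ $x$. Combined with the integrable envelope $\E_{\theta_0}[\sup_{\theta\in U}\log p_\theta]<\infty$, the monotone/dominated convergence theorem gives
$$
\lim_{k\to\infty}\E_{\theta_0}\!\left[\sup_{\theta\in U_k}\log p_\theta\right]\;\le\;\E_{\theta_0}[\log p_{\theta_1}]\;<\;\E_{\theta_0}[\log p_{\theta_0}],
$$
where the strict inequality uses uniqueness of the maximizer. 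Hence for every $\theta_1\in K_\varepsilon$ there is an open neighborhood $V_{\theta_1}$ with $\E_{\theta_0}[\sup_{\theta\in V_{\theta_1}}\log p_\theta]<\E_{\theta_0}[\log p_{\theta_0}]$.

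Next I would exploit compactness: the open cover $\{V_{\theta_1}:\theta_1\in K_\varepsilon\}$ of the compact set $K_\varepsilon$ admits a finite subcover $V_1,\dots,V_m$. For each $j$, the strong law of large numbers applied to the integrable variable $\sup_{\theta\in V_j}\log p_\theta(X_i)$ yields
$$
\frac{1}{n}\sum_{i=1}^n\sup_{\theta\in V_j}\log p_\theta(X_i)\;\xrightarrow{P_{\theta_0}\text{-a.s.}}\;\E_{\theta_0}\!\left[\sup_{\theta\in V_j}\log p_\theta\right]\;<\;\E_{\theta_0}[\log p_{\theta_0}]\;=\;\lim_{n\to\infty}\frac{1}{n}\sum_{i=1}^n\log p_{\theta_0}(X_i).
$$
Therefore, on an event $A_n$ of $P_{\theta_0}^n$-probability tending to one, for each $j=1,\dots,m$
$$
\sup_{\theta\in V_j}\sum_{i=1}^n\log p_\theta(X_i)\;\le\;\sum_{i=1}^n\sup_{\theta\in V_j}\log p_\theta(X_i)\;<\;\sum_{i=1}^n\log p_{\theta_0}(X_i).
$$
Any $\hat{\theta}_n\in K_\varepsilon\subseteq \bigcup_j V_j$ would then strictly decrease the log-likelihood compared to $\theta_0$, contradicting the definition of $\hat{\theta}_n$. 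Hence $\{|\hat{\theta}_n-\theta_0|\ge\varepsilon, \hat{\theta}_n\in K\}\subseteq A_n^c$, which gives the claim.

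The main obstacles are two minor but annoying points: first, ensuring measurability of the suprema $x\mapsto\sup_{\theta\in V_j}\log p_\theta(x)$ — this is guaranteed by the assumption stated for sufficiently small intervals, so one shrinks the $V_j$ if necessary. Second, handling the edge case $\E_{\theta_0}[\log p_{\theta_0}]=-\infty$: this is precluded by taking any fixed small neighborhood of $\theta_0$ itself as one of the $V_j$ and applying the envelope bound there, or by noting that uniqueness of the maximum forces the value at $\theta_0$ to exceed $-\infty$ whenever some $\E_{\theta_0}[\log p_\theta]>-\infty$; since the latter holds by the envelope hypothesis, everything remains finite.
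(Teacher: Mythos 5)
The paper does not prove Theorem~\ref{thm:vdVConsistency} at all; it simply recalls it with a citation to \citet[Theorem~5.14]{vanderVaart07}. Your proof is the standard Wald consistency argument (monotone/dominated convergence over shrinking neighborhoods, finite subcover by compactness, SLLN on the envelope suprema, contradiction with maximality of $\hat\theta_n$), which is exactly the argument underlying the cited textbook result and appears correct, including the handling of the two side issues you flagged (measurability of the suprema via shrinking to sufficiently small intervals, and finiteness of $\E_{\theta_0}[\log p_{\theta_0}]$ from uniqueness of the maximizer plus the envelope bound).
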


This can be translated into the following $\alpha$-private version, which comes with even weaker assumptions than the classical one.

\begin{theorem}\label{thm:PrivConsistent}
If $\P$ is DQM at every $\theta\in\Theta\subseteq\R^p$ (in particular, $\Theta$ is open) and the channel $Q\in\mathcal Q_\alpha(\X)$ is such that the model $Q\P$ is identifiable, then any non-interactive $\alpha$-private MLE satisfies
$$
(QP_{\theta_0})^n(\|\hat{\theta}_n-\theta_0\|_2>\eps \land \hat{\theta}_n\in K)\xrightarrow[n\to\infty]{} 0,
$$
for every $\theta_0\in\Theta$, every $\eps>0$ and every compact set $K\subseteq\Theta$.
\end{theorem}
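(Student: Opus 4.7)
The plan is to apply the classical Wald consistency theorem (Theorem~\ref{thm:vdVConsistency}) directly to the transformed parametric model $Q\P = (QP_\theta)_{\theta\in\Theta}$ for the observed sanitized data, rather than to the original model $\P$. Using the densities $q(z|x)$ of $Q$ with respect to $\nu(dz) := Q(dz|x_0)$ provided by Lemma~\ref{lemma:Qdensities}, we have $q(z|x)\in[e^{-\alpha},e^\alpha]$ by the $\alpha$-privacy constraint, so the marginal densities $q_\theta(z) = \int_\X q(z|x)p_\theta(x)\,\mu(dx)$ of $QP_\theta$ are also in $[e^{-\alpha},e^\alpha]$ pointwise. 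This boundedness away from zero and infinity is what makes the three hypotheses of Wald's theorem easy to verify in our setting.

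First, I would verify continuity of $\theta\mapsto \log q_\theta(z)$. Because $\P$ is DQM at every $\theta\in\Theta$, it is in particular continuous in total variation, so $\|p_{\theta+h}-p_\theta\|_{L^1(\mu)}\to 0$ as $h\to 0$. The bound $|q_{\theta+h}(z)-q_\theta(z)|\le e^\alpha\|p_{\theta+h}-p_\theta\|_{L^1(\mu)}$ then gives continuity of $q_\theta(z)$ in $\theta$ \emph{uniformly} in $z\in\mathcal Z$, and since $q_\theta\ge e^{-\alpha}>0$, the same holds for $\log q_\theta(z)$. In particular $\log q_\theta(z)$ is upper-semicontinuous in $\theta$ for every $z$. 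Measurability of $z\mapsto\sup_{\theta\in U}\log q_\theta(z)$ over small (open) intervals follows from this continuity by reducing the supremum to a countable dense subset of $U$. The integrability condition is then trivial: $\sup_{\theta\in U}\log q_\theta(z)\le \alpha$, so $\E_{\theta_0}[\sup_{\theta\in U}\log q_\theta(Z)]\le \alpha<\infty$.

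The main (and only nontrivial) step is to check that the Kullback--Leibler functional $\theta\mapsto \E_{\theta_0}[\log q_\theta(Z)]$ has a \emph{unique} maximizer at $\theta_0$. For any $\theta\in\Theta$, Jensen's inequality gives
\begin{equation*}
\E_{\theta_0}\!\left[\log\frac{q_\theta(Z)}{q_{\theta_0}(Z)}\right]
\;\le\; \log \E_{\theta_0}\!\left[\frac{q_\theta(Z)}{q_{\theta_0}(Z)}\right]
\;=\;\log\int_{\mathcal Z} q_\theta\,d\nu \;=\; 0,
\end{equation*}
which is well defined because $q_{\theta_0}\ge e^{-\alpha}>0$. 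Equality holds if and only if $q_\theta/q_{\theta_0}$ is $QP_{\theta_0}$-a.s.\ constant, equivalently $q_\theta = q_{\theta_0}$ $\nu$-a.e., equivalently $QP_\theta = QP_{\theta_0}$. By the assumed identifiability of $Q\P$, this forces $\theta=\theta_0$. Hence $\theta_0$ is the unique maximizer.

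All hypotheses of Theorem~\ref{thm:vdVConsistency} (applied to $Q\P$ and the sample $Z_1,\dots,Z_n\stackrel{\text{iid}}{\sim} QP_{\theta_0}$) are satisfied, so any maximizer $\hat{\theta}_n$ of $\theta\mapsto\sum_{i=1}^n\log q_\theta(Z_i)$ satisfies the stated consistency statement on every compact $K\subseteq\Theta$. The only subtlety I would want to be careful about in writing this up is measurable selection of $\hat{\theta}_n$ as a function of $(Z_1,\dots,Z_n)$, which is handled in the standard way via continuity of the likelihood on the separable space $\Theta\subseteq\R^p$ combined with the boundedness of $\log q_\theta$, so that restriction to a compact $K$ and standard measurable--maximum arguments apply.
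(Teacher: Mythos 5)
Your proof is correct and takes essentially the same route as the paper: apply Wald's theorem (Theorem~\ref{thm:vdVConsistency}) to the sanitized model $Q\P$, using $q_\theta(z)\in[e^{-\alpha},e^\alpha]$ to make upper-semicontinuity, measurability, and integrability immediate, and deduce uniqueness of the KL maximizer from identifiability. The only differences are cosmetic: you bound $|q_{\theta+h}(z)-q_\theta(z)|$ by $e^\alpha\|p_{\theta+h}-p_\theta\|_{L^1(\mu)}$ and invoke DQM--implies--TV-continuity, while the paper goes through Cauchy--Schwarz and the Hellinger distance, and you spell out the Jensen argument for uniqueness where the paper cites \citet[Lemma~5.35]{vanderVaart07}.
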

\begin{proof}
We check the assumptions of Theorem~\ref{thm:vdVConsistency} with $q_\theta$ instead of $p_\theta$. We can even establish continuity of $\theta\mapsto \log q_\theta(z)$ for every $z\in\Z$, by using the DQM property to get
\begin{align*}
&|q_{\theta+h}(z) - q_\theta(z)| \\
&\quad= \left|\int_\X q(z|x) \left[\sqrt{p_{\theta+h}(x)}-\sqrt{p_\theta(x)}\right]\left[\sqrt{p_{\theta+h}(x)}+\sqrt{p_\theta(x)}\right]\mu(dx)\right|\\
&\quad\le e^\alpha 2\sqrt{\int_\X\left( \sqrt{p_{\theta+h}(x)}-\sqrt{p_\theta(x)}\right)^2\mu(dx)} \xrightarrow[h\to0]{}0.
\end{align*}
This also implies that $z\mapsto \sup_{\theta\in U}\log q_\theta(z)$ is measurable for any ball $U\subseteq\Theta\subseteq \R^p$, because the supremum can be replaced by a countable one. $QP_{\theta_0}$-integrability of the supremum is now trivial, in view of $\log q_\theta(z)\in[-\alpha,\alpha]$. Uniqueness of the maximizer of $\theta\mapsto \E_{QP_{\theta_0}}[\log q_\theta]$ follows, for example, from \citet[][Lemma~5.35]{vanderVaart07}, using identifiablity.
\end{proof}

The proof of Theorem~\ref{thm:PrivConsistent} nicely highlights the smoothing and bounding effect that differential privacy has on the original model. Since $q_\theta(z) = \int_\X q(z|x)p_\theta(x)\mu(dx)$ is an integrated version of $p_\theta$, the continuity in $L^2$ (or even DQM) of $\P$ nicely translates into continuity of $\theta\mapsto q_\theta(z)$. The boundedness condition of Theorem~\ref{thm:vdVConsistency} immediately holds for $q_\theta$ because of the convenient property that $e^{-\alpha}\le q(z|x) \le e^\alpha$, which we get from Lemma~\ref{lemma:Qdensities}.

\subsubsection{Method of moments}

Consider first the case where the original data $X_1,\dots, X_n$ are observed. If, for some measurable function $g:\X\to G$ (often a power function $g(x) = (x,x^2,\dots,x^p$)) with range $G=\{g(x):x\in\X\}\subseteq\R^p$, the function $f:\Theta\to G$, $f(\theta) := \E_\theta[g]$ has a continuous inverse $f^{-1}:G\to\Theta\subseteq\R^p$, then the moment estimator $\hat{\theta}_n := f^{-1}(\frac{1}{n}\sum_{i=1}^n g(X_i))$ is consistent. We can turn this into a non-interactive $\alpha$-private procedure by considering the truncation operation $\Pi_{\tau} (y) := y\mathds 1_{\{\|y\|_1\le \tau\}}$, $y\in\R^p$.

\begin{theorem}
Suppose that there exists a measurable function $g:\X\to G$ with range $G\subseteq\R^p$, such that the function $f:\Theta\to G$, $f(\theta) := \E_\theta[g]$ has a continuous inverse $f^{-1}:G\to\Theta$. Generate $Z_i := \Pi_{\tau_n}[g(X_i)] + \frac{2\tau_n}{\alpha}W_i$, for independent random $p$-vectors $W_i$ with iid standard Laplace components $W_{ij}\thicksim Lap(1)$. Then the estimator
$$
\hat{\theta}_n = f^{-1}\left(\frac{1}{n}\sum_{i=1}^n Z_i\right)
$$
is non-interactive $\alpha$-private. If $\tau_n\to\infty$ and $\tau_n^2/n\to0$ as $n\to\infty$, then it is also consistent.
\end{theorem}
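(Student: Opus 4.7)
The plan is to handle the privacy claim first (which reduces to the standard Laplace mechanism) and then reduce consistency to a convergence-in-probability statement for $\bar Z_n := \frac{1}{n}\sum_{i=1}^n Z_i$ followed by a continuous-mapping step through $f^{-1}$.

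For privacy, I would observe that the truncation $\Pi_{\tau_n}:\R^k\to\R^k$ has $\ell_1$-sensitivity at most $2\tau_n$: for any $y,y'\in\R^k$ we have $\|\Pi_{\tau_n}(y)-\Pi_{\tau_n}(y')\|_1\le \|\Pi_{\tau_n}(y)\|_1+\|\Pi_{\tau_n}(y')\|_1\le 2\tau_n$. Hence, for the marginal kernel $Q_i(dz|x)$ which outputs $\Pi_{\tau_n}[g(x)] + \frac{2\tau_n}{\alpha}W$ with $W$ having i.i.d.\ standard Laplace coordinates, the standard Laplace-mechanism calculation (comparing the product Laplace densities at two inputs $x,x'$) yields $Q_i(A|x)\le e^\alpha Q_i(A|x')$ for every measurable $A$, so $Q_i\in\mathcal Q_\alpha(\X\to\R^k)$. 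Since the $W_i$ are mutually independent and $Z_i$ depends only on $X_i$ and $W_i$, the joint kernel factors as in \eqref{eq:non-Inter} and is therefore $\alpha$-non-interactive.

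For consistency, I would decompose $\bar Z_n = \bar U_n + \bar V_n$, where $\bar U_n = \frac{1}{n}\sum_{i=1}^n \Pi_{\tau_n}[g(X_i)]$ and $\bar V_n = \frac{2\tau_n}{\alpha n}\sum_{i=1}^n W_i$, and argue $\bar Z_n \to f(\theta_0)$ in $P_{\theta_0}^n$-probability. For the noise term, $\E[\bar V_n]=0$ and $\Var(\bar V_n) = \frac{4\tau_n^2}{\alpha^2 n^2}\cdot n\cdot 2J_k = O(\tau_n^2/n)\to 0$ by assumption, so $\bar V_n\to0$ in $L_2$ hence in probability. For $\bar U_n$, integrability of $g$ (implicit in the existence of $f(\theta_0) = \E_{\theta_0}[g]$) and $\|\Pi_{\tau_n}(g(X_1))-g(X_1)\|_1 = \|g(X_1)\|_1\mathds 1_{\{\|g(X_1)\|_1>\tau_n\}}\to 0$ pointwise, together with dominated convergence, give $\E[\Pi_{\tau_n}(g(X_1))]\to f(\theta_0)$. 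Moreover, $\|\Pi_{\tau_n}(g(X_1))\|_2\le \tau_n$ a.s., so $\Var(\bar U_n)\le \tau_n^2/n\to 0$. Chebyshev then yields $\bar U_n\to f(\theta_0)$ in probability, and combining gives $\bar Z_n\to f(\theta_0)$.

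Finally, to conclude $\hat\theta_n = f^{-1}(\bar Z_n)\to\theta_0$ in probability I would invoke the continuous mapping theorem at the point $f(\theta_0)$. The one delicate point is that $\bar Z_n$ need not lie in $G = f(\Theta)$, so $f^{-1}(\bar Z_n)$ is a priori undefined. I would address this by noting that continuity of $f^{-1}$ at $f(\theta_0)$ and the convergence $\bar Z_n\to f(\theta_0)$ in probability allow us, without loss of generality, to fix any measurable extension $\tilde f^{-1}:\R^k\to\Theta$ with $\tilde f^{-1}=f^{-1}$ on $G$: on the event $\{\bar Z_n\in G\}$, whose probability tends to one because $G$ contains a neighborhood of $f(\theta_0)$ relative to the closure of $G$ and $\bar Z_n$ concentrates there, $\hat\theta_n = \tilde f^{-1}(\bar Z_n)$ agrees with $f^{-1}(\bar Z_n)$; continuity then forces $\tilde f^{-1}(\bar Z_n)\to \theta_0$. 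I expect this extension/definedness issue to be the only technical nuisance; everything else follows from routine applications of Chebyshev, dominated convergence, and the Laplace mechanism.
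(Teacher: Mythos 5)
Your proof follows essentially the same route as the paper: the privacy claim is the Laplace mechanism applied to the truncation (whose $\ell_1$-sensitivity is at most $2\tau_n$), and consistency is established by decomposing $\bar Z_n$ into the noise average $\bar V_n$ and the truncated-moment average $\bar U_n$, showing each tends to the right limit, and finishing with continuous mapping. The only substantive variation is in how you handle $\bar U_n$: the paper compares $\bar U_n$ to the \emph{untruncated} mean $\frac{1}{n}\sum g(X_i)$ and shows the difference is $o_{L_1}(1)$ by dominated convergence, then invokes the WLLN; you instead compute the mean and variance of $\bar U_n$ directly, using the bound $\|\Pi_{\tau_n}(g(X_1))\|\le\tau_n$ together with $\tau_n^2/n\to0$. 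Both are correct; the paper's version is slightly more economical in that for $\bar U_n$ it only uses $\tau_n\to\infty$, reserving $\tau_n^2/n\to0$ for the noise term, but this has no practical consequence.

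One point in your last paragraph is factually off. You claim the event $\{\bar Z_n\in G\}$ has probability tending to one. This is false in general: $\bar Z_n$ has a Lebesgue density on $\R^k$ (the noise $\bar V_n$ does), while $G=f(\Theta)$ with $\Theta\subseteq\R$ is typically a one-dimensional set in $\R^k$, so $\Pr(\bar Z_n\in G)=0$ for every $n$ unless $k=1$ and $G$ has nonempty interior. The definedness concern you raise is legitimate — and the paper indeed suppresses it entirely in its one-line continuous-mapping step — but the fix cannot go through ``$\bar Z_n$ eventually lands in $G$.'' The standard repair is to define $\hat\theta_n$ as a measurable (near-)minimizer of $\theta\mapsto\|f(\theta)-\bar Z_n\|$ over $\Theta$, which coincides with $f^{-1}(\bar Z_n)$ whenever $\bar Z_n\in G$, and then argue directly that such $\hat\theta_n\to\theta_0$ using continuity of $f^{-1}$ on $G$ and $\bar Z_n\to f(\theta_0)$. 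With that amendment your argument is sound.
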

\begin{proof}
For $\alpha$-privacy, simply notice that a conditional Lebesgue density of $Z_i$ given $X_i$ is given by $q(z|x) = \frac{\alpha}{4\tau_n}\exp\left( -\frac{\alpha}{2\tau_n}\|z-\Pi_{\tau_n}(x)\|_1\right)$, which is easily seen to satisfy $q(z|x)/q(z|x')\le e^\alpha$, using the reverse triangle inequality.

Now $\frac{1}{n}\sum_{i=1}^n Z_i = \frac{1}{n}\sum_{i=1}^n \Pi_{\tau_n}[g(X_i)] + \frac{2\tau_n}{\alpha}\frac{1}{n}\sum_{i=1}^n W_i$. The mean of the noise term $\frac{2\tau_n}{\alpha}\bar{W}_n$ is zero and its components have variance equal to $\frac{8\tau_n^2}{n\alpha^2}$, which converges to zero as $n\to\infty$. For the $X_i$ term note that $\Pi_{\tau_n}[g(X_i)] - g(X_i) = -g(X_i)\mathds 1_{\{\|g(X_i)\|_1> \tau_n\}}$. Hence, 
$$
\E \left\|\frac{1}{n}\sum_{i=1}^n \Big[\Pi_{\tau_n}[g(X_i)] - g(X_i)\Big]\right\|_1 \le \E \left\| g(X_1)\mathds 1_{\{\|g(X_1)\|_1> \tau_n\}}\right\|_1 \xrightarrow[n\to\infty]{}0,
$$
by dominated convergence. Hence $\frac{1}{n}\sum_{i=1}^n Z_i = \frac{1}{n}\sum_{i=1}^n g(X_i) + o_{P}(1)$ converges in probability to $f(\theta)$ and the proof is finished by an application of the continuous mapping theorem.
\end{proof}

We want to point out that the truncation approach actually comes at the price of a convergence rate of only $\tau_n/\sqrt{n}$ rather than the usual $n^{-1/2}$. This is not an issue for the theory of our two-step procedure, since we don't require a $\sqrt{n}$-consistent preliminary estimator, but it may of course be an issue for applications. Notice, however, that it may often be possible to avoid the truncation $\Pi_{\tau_n}$ if the function $g$ can be chosen such that its range space $G\subseteq\R^p$ is bounded by, say, $\|g(x)\|_1\le c_p$, for all $x\in\X$, because then we can choose $\tau_n=c_p$. This typically works if $\X$ itself is bounded and $g(x)=x$, but can also be used for unbounded $\X$.

\subsection{Consistent estimation of the optimal privacy mechanism}
\label{sec:consistentMechanism}
Recall that in order to privately estimate $\theta\in\Theta\subseteq\R$ from iid $Z_1,\dots, Z_n \thicksim QP_\theta$, we would ideally like to choose $Q\in\mathcal Q_\alpha(\X)$ such that $I_\theta(Q\P)\in\R$ is maximal. However, as we have seen in Section~\ref{sec:Examples}, a maximizer, if it exists, will typically depend on the unknown parameter $\theta$. In the previous section we saw how $\theta$ can be consistently estimated in a locally differentially private way, given that a preliminary privacy mechanism $Q_0\in\mathcal Q_\alpha(\X\to\mathcal Z_0)$ can be found such that the model $Q_0\P$ is identifiable. We will use such a consistent $\alpha$-private estimator $\tilde{\theta}_{n_1}$ to construct a sequence $\hat{Q}_{n_1}\in\mathcal Q_\alpha(\X)$ such that $I_\theta(\hat{Q}_{n_1}\P)\to\sup_{Q\in\mathcal Q_\alpha(\X)} I_\theta(Q\P)$ in $(Q_0P_\theta)^{n_1}$-probability, as the size $n_1$ of the first part of the sample grows to infinity. An ad-hoc idea to obtain $\hat{Q}_{n_1}$ would be to solve $\sup_{Q\in\mathcal Q_\alpha(\X)} I_{\tilde{\theta}_{n_1}}(Q\P)$. However, direct optimization over the vast set $\mathcal Q_\alpha$ is numerically challenging. Instead, we solve a discretized version of this optimization problem of which we can show that it approximates the original one well enough.

Before we outline the procedure for constructing $\hat{Q}_{n_1}$, we provide a complete list of all the regularity conditions imposed in this subsection. Even though our main results on maximizing Fisher-Information apply only in the case $p=1$, we here list our assumptions with general $p\in\N$ because some of our preliminary results hold generally and may be of independent interest.

\begin{enumerate}
        \setlength\leftmargin{-20pt}
\renewcommand{\theenumi}{(C\arabic{enumi})}
\renewcommand{\labelenumi}{\textbf{\theenumi}}

\item \label{C.DQM} The model $(\X,\mathcal F, \P)$ is DQM at every $\theta\in\Theta\subseteq\R^p$ with score $s_\theta$ and dominating measure $\mu$.

\item \label{C.measurable} The mappings $(x,\theta)\mapsto p_\theta(x): \X\times\Theta\to\R_+$ and $(x,\theta)\mapsto s_\theta(x): \X\times\Theta\to\R^p$ are measurable.

\item \label{C.L2cont} The mapping $\theta\mapsto s_\theta\sqrt{p_\theta}: \Theta\to L_2(\mu,\|\cdot\|_2)$ is continuous.

\end{enumerate}

Conditions~\ref{C.DQM} and \ref{C.L2cont} together with positive definiteness of $I_\theta(\P)$ for all $\theta\in\Theta$, correspond exactly to what \citet{Bickel93} call \emph{regularity} of the parametric model $\P$.
In our leading case $p=1$, positivity of the Fisher-Information is not needed for consistent estimation of the optimal privacy mechanism, because this problem is trivial if $I_\theta(\P) = 0$. However, later on it will be necessary to ensure $I_\theta(\P)\ge \sup_{Q\in\mathcal Q_\alpha(\X)} I_\theta(Q\P)>0$ in order to prove efficiency of the two-step private MLE. The measurability condition~\ref{C.measurable} will be used to construct a measurable discretization of the original model and to show that the Fisher-Information of the discretized model and a maximizer thereof are appropriately measurable.

\subsubsection{Outline of general approach}

For a given $k\in\N$, consider a measurable (discretization) map $T_k:\X\to\X_k:=[k]$. Before applying a privacy mechanism, we discretize each observation $X_i$ by recording only $Y_i = T_k(X_i)$.\footnote{Clearly, local pre-processing preserves privacy (cf. Lemma~\ref{lemma:pre-postProcessing}).} Thus, the model describing the quantized observations $Y_i$ is given by $T_k\P = (P_\theta T_k^{-1})_{\theta\in\Theta}$.
With such a discretization at hand, we solve the optimization problem
\begin{equation}\label{eq:finitedimlOPt}
\max_{Q\in\mathcal Q_\alpha(\X_k\to\X_k)} I_{\tilde{\theta}_{n_1}}(QT_k\P),
\end{equation}
using the LP formulation of Theorem~4 in \citet{Kairouz16} (see also Lemma~\ref{lemma:finitedimlOpti} below, which shows that the Fisher-Information satisfies the assumptions of that reference and is continuous in $Q$ on the compact set $\mathcal Q_\alpha(\X_k\to\X_k)$), to obtain $\hat{Q}_{n_1}\in\mathcal Q_\alpha(\X_k\to\X_k)\subseteq\R^{k\times k}$. In order for the procedure to apply in high generality, we will even allow $k$ and the quantizer $T_k$ to depend on the preliminary estimate $\tilde{\theta}_{n_1}$ and therefore we write $\hat{k}_{n_1}\in\N$ and $\hat{T}_{{n_1}}:\X\to[\hat{k}_{n_1}]$.\footnote{In the applications of Section~\ref{sec:appl} below, however, we find that $k$ can be chosen independently of the preliminary estimate.} We then show that 
\begin{equation}\label{eq:consistentQ}
I_\theta(\hat{Q}_{n_1}\hat{T}_{{n_1}}\P) \to \sup_{Q\in\mathcal Q_\alpha(\X)} I_\theta(Q\P), \quad\quad \text{in }(Q_0P_\theta)^{n_1}\text{-probability},
\end{equation}
as $n_1\to\infty$. Thus, given this channel $\hat{Q}_{n_1}$ and the discretization map $\hat{T}_{{n_1}}$ (both of which are constructed using the preliminary estimate $\tilde{\theta}_{n_1}$ based on sanitized data from a first group of individuals $i=1,\dots, n_1$) the second group of data owners ($i=n_1+1,\dots,n$) can locally generate $Z_i$ by first discretizing their data $X_i$ to get $Y_i=\hat{T}_{{n_1}}(X_i)$ and then draw $Z_i$ from $\hat{Q}_{n_1}(dz_i|Y_i)$ (cf. Figure~\ref{fig:Qn}). Notice that the composition $\hat{Q}_{n_1}\hat{T}_{{n_1}}$ can be understood as a collection of Markov kernels indexed by the first group data $z_{1:n_1}$, or simply by $\tilde{\theta}_{n_1}$. In other words, we can think of it as a function $(C,x_i,z_{1:n_1})\mapsto [\hat{Q}_{n_1}\hat{T}_{{n_1}}](C|x_i,z_{1:n_1})$, for $C\subseteq \N$, $x_i\in\X$ and $z_{1:n_1}\in\mathcal Z_0^{n_1}$. In particular, $[\hat{Q}_{n_1}\hat{T}_{{n_1}}]_{z_{1:{n_1}}} \in \mathcal Q_\alpha(\X\to[\hat{k}_{n_1}(z_{1:n_1})])$, for all first group data samples $z_{1:n_1}\in\mathcal Z_0^{n_1}$ by construction. Consequently, the overall privacy protocol can be formally expressed as an $\alpha$-sequential mechanism 
\begin{align}\label{eq:Q^n}
Q^{(n)}(dz_{1:n}|x_{1:n}):=\left[\bigotimes_{i=n_1+1}^n[\hat{Q}_{n_1}\hat{T}_{{n_1}}](dz_i|x_i,z_{1:{n_1}})\right] \left[\bigotimes_{j=1}^{n_1} Q_0(dz_j|x_j)\right],
\end{align}
provided we can also show that $[\hat{Q}_{n_1}\hat{T}_{{n_1}}] \in \mathfrak P(\X\times \mathcal Z_0^{n_1}\to\N)$, that is, provided we can show measurability of $(x_i, z_{1:{n_1}})\mapsto[\hat{Q}_{n_1}\hat{T}_{n_1}](C|x_i,z_{1:{n_1}})$, for all $C\subseteq \N$ (see Lemma~\ref{lemma:measurability} and the ensuing discussion below).
That efficient estimation based on sanitized $Z_i$ generated from $Q^{(n)}$ as above is indeed possible will be established in Subsection~\ref{sec:MLEefficiency}. The necessary consistency claim in \eqref{eq:consistentQ}, including the mentioned measurability, will be rigorously proved in the following Subsection~\ref{sec:Technical}.

%==========

\subsubsection{Technical aspects}
\label{sec:Technical}
We begin the rigorous proof of the consistency claim in \eqref{eq:consistentQ} by a discuss of the precise requirements on the discretization map $T_k$. The following definition is motivated by Lemma~\ref{lemma:Delta_k} below.

\begin{definition}\label{def:quantizer}
Suppose Condition~\ref{C.DQM} holds and let $\dot{p}_\theta(x) := s_\theta(x) p_\theta(x)\in\R^p$. A sequence of measurable mappings $(x,\theta)\mapsto T_{m}(x,\theta): \X\times\Theta\to\N$, $m\in\N$, is called a \emph{consistent quantizer} of the model $\P$, if there exists another sequence of measurable mappings $k_m:\Theta\to\N$, such that the following hold true:
\begin{enumerate}[a)]
\item For all $m\in\N$, $\theta\in\Theta$ and $x\in\X$ we have $T_{m}(x,\theta) \in [k_m(\theta)] := \{1,\dots, k_m(\theta)\}.$
\item Define the following quantities: $T_{m,\theta}(x) := T_m(x,\theta)$, $B_j := B_{j,m}(\theta):= T_{m,\theta}^{-1}(\{j\})$, $J:=J_m(\theta):= \{l\in[k_m(\theta)]: 0<\mu(B_{l,m}(\theta))<\infty\}$, $K:=K_{J,m}(\theta) := \bigcup_{j\in J}B_{j,m}(\theta)$,
$$
\bar{p}_\theta (x) := \sum_{j\in J} \frac{\int_{B_j}p_\theta(y)\mu(dy)}{\mu(B_j)} \mathds 1_{B_j}(x), 
\; \text{ and }\;
\bar{\dot{p}}_\theta(x) := \sum_{j\in J} \frac{\int_{B_j}\dot{p}_\theta(y)\mu(dy)}{\mu(B_j)} \mathds 1_{B_j}(x) \in\R^p.
$$
We then have $\Delta_m(\theta_m)\to0$ as $m\to\infty$, for every sequence $(\theta_m)_{m\in\N}$ in $\Theta$ whose limit is also in $\Theta$, and where
\begin{align}
\Delta_m(\theta):=\left[\Big\|(p_\theta - \bar{p}_\theta)\mathds 1_{K(\theta)}\Big\|_{L_1(\mu)} + \Big\|\|\dot{p}_\theta - \bar{\dot{p}}_\theta\|_{\ell_2}\mathds 1_{K(\theta)}\Big\|_{L_1(\mu)} + \sqrt{P_\theta(K(\theta)^c)} \right].\label{eq:T_kErr}
\end{align} 
\end{enumerate}
\end{definition}

The idea of a consistent quantizer is that we want to be able to approximate the Fisher-Information $I_\theta(Q\P)$ of a privatized model $Q\P$ by the Fisher-Information $I_\theta(QT_{m,\theta}\P)$  of a quantized and privatized model at the value of the preliminary estimate $\theta=\tilde{\theta}_{n_1}$, hence, the convergence is required along arbitrary sequences $(\theta_m)_{m\in\N}$ and not just pointwise. To approximate Fisher-Information, we need to approximate the density $p_\theta$ and its `derivative' $\dot{p}_\theta$, which determine the Fisher-Information at $\theta$, by histograms. But to obtain negligibility of $\Delta_m(\theta_m)$ for every convergent sequence $\theta_m\in\Theta$, it may be necessary to allow the corresponding partition, and in some cases even the resolution $k_m$, to depend on $\theta_m$. For instance, if the quantizer, and thus the set $K$, wasn't allowed to depend on $\theta_m$, then we would need to impose tightness of the model $\P$ in order to guarantee that $P_{\theta_m}(K_m^c) \to 0$ holds for every convergent sequence.  

The following lemma shows that consistent quantizers generally exist and the proof of that result, which is deferred to the supplement (Section~\ref{sec:proofsEstQ}), also provides an explicit construction. However, in specific models one can often find simpler constructions with a $\theta$-independent $k_m$ that also scales slowly with $m$ while the approximation error in \eqref{eq:T_kErr} converges faster (see Section~\ref{sec:appl}). The magnitude of $k_m$ is, of course, essential for the numerical solvability of \eqref{eq:finitedimlOPt}.

\begin{lemma}\label{lemma:ConsQuantExists} If $\X\subseteq\R^d$ and Conditions~\ref{C.DQM} and \ref{C.measurable} hold for a dominating measure $\mu$ that is finite on compact sets, then a consistent quantizer of $\P$ exists.
\end{lemma}

Assuming that we have a consistent quantizer at hand, the next step is to ensure that the objective function in \eqref{eq:finitedimlOPt} is well defined, that is, we need to show that for $Q\in\mathcal Q_\alpha([k_m(\theta)]\to\mathcal Z)$ the model $QT_{m,\theta}\P$ is DQM and thus possesses a finite Fisher-Information. But this follows upon noticing that $Q[T_{m,\theta}\P] = [QT_{m,\theta}]\P$, because $Q[T_{m,\theta}P_{\theta'}] = \int_{[k_m(\theta)]} Q(\cdot|y) [P_{\theta'} T_{m,\theta}^{-1}](dy) = \int_{\X} Q(\cdot|T_{m,\theta}(x)) P_{\theta'}(dx) = [QT_{m,\theta}]P_{\theta'}$, and using Lemma~\ref{lemma:pre-postProcessing}, Lemma~\ref{lemma:DQM} and the fact that $\P$ is assumed to be DQM.

Now let us study some properties of the finite dimensional optimization problem \eqref{eq:finitedimlOPt}. Among other things, the following lemma shows that we are maximizing a continuous convex function on a convex compact set. In particular, the maximum is achieved. The proofs of all the following results are deferred to Section~\ref{sec:proofsEstQ}.

\begin{lemma}\label{lemma:finitedimlOpti}
Define the set $\mathcal M_\alpha(\ell,k)$ consisting of all $\ell\times k$ column stochastic matrices $Q$ (i.e., $Q_{ij}\ge0$ and $\sum_{i=1}^\ell Q_{ij} = 1$) with the following property: Two elements $Q_{ij}$ and $Q_{ij'}$ of any given row $i$ of $Q$ satisfy $e^{-\alpha}Q_{ij'}\le Q_{ij} \le e^\alpha Q_{ij'}$. Furthermore, consider a model $(\X,\mathcal F,\P)$ that is DQM at $\theta\in\Theta\subseteq\R^p$ with score $s_\theta$ and a measurable mapping $T:\X\to[k]$. Write $p_\theta := (P_\theta T^{-1}(\{j\}))_{j\in[k]}\in\R^k$ and $\dot{p}_\theta := (\int_{T^{-1}(\{j\})} s_\theta(x)p_\theta(x)\mu(dx))_{j\in[k]}\in\R^{p\times k}$.
\begin{enumerate}[i)]
\item\label{lemma:finitedimlOpti:i} The set $\mathcal Q_\alpha([k]\to[\ell])$ can be identified with $\mathcal M_\alpha(\ell,k)$ as follows: For $M\in\mathcal M_\alpha(\ell,k)$, define $Q(C|y) := \sum_{i\in C} M_{i,y}$, $C\subseteq[\ell]$, $y\in[k]$. For $Q \in \mathcal Q_\alpha([k]\to[\ell])$, define $M_{i,y} := Q(\{i\}|y)$, $i\in[\ell]$, $y\in[k]$.
\item\label{lemma:finitedimlOpti:ii} $\mathcal M_\alpha(\ell,k)\subseteq\R^{\ell\times k}$ is a compact polytope. \citep[Lemma~23 of][]{Kairouz16}
\item\label{lemma:finitedimlOpti:iii} Consider the convex cone $\mathcal C_\alpha(k) := \{v\in\R^k: 0\le v_j\le 1, v_j\le e^\alpha v_{j'}, \forall j,j'\in[k]\}$ and for $\theta\in\Theta$ and $v\in\mathcal C_\alpha(k)$, define $g_\theta(v) := \frac{\dot{p}_\theta vv^T\dot{p}_\theta^T}{v^Tp_\theta}\in\R^{p\times p}$, $g_\theta(0):=0$. Then $I_\theta(QT\P) = \sum_{i=1}^\ell g_\theta(Q_{i\cdot}^T)$ for all $Q\in\mathcal M_\alpha(\ell,k)$, and $g_\theta:\mathcal C_\alpha(k) \to \R^{p\times p}$ is continuous. Here, $Q_{i\cdot}\in\R^k$ denotes the $i$-th row of the matrix $Q$.
\item\label{lemma:finitedimlOpti:iv} If $p=1$, then $g_\theta: \mathcal C_\alpha(k) \to \R$ is convex and sublinear.
\end{enumerate} 
\end{lemma}

To rigorously resolve the measurability issue mentioned in the previous subsection, let us reconsider the optimization problem \eqref{eq:finitedimlOPt} but now explicitly including the dependence of the quantizer $T_{m,\theta}$ on $\theta$. The proof of the following result is deferred to the supplement.

\begin{lemma}\label{lemma:measurability}
If Conditions~\ref{C.DQM} and \ref{C.measurable} hold with $p=1$ and $k:\Theta\to\N$ and $T:\X\times\Theta\to\N$ are measurable with $T_{\theta}(x) := T(x,\theta)\le k(\theta)$ for all $(x,\theta)\in\X\times\Theta$, then there exists a Markov kernel $Q\in\mathfrak P(\N\times\Theta\to\N)$, such that $Q_\theta(C|y) := Q(C|y,\theta)$ ($\theta\in\Theta$, $C\subseteq[k(\theta)]$, $y\in [k(\theta)]$) satisfies for all $\theta\in\Theta$, $Q_\theta\in\mathcal Q_\alpha([k(\theta)]\to[k(\theta)])$ and
\begin{equation}\label{eq:lemma:measurability}
I_\theta(Q_\theta T_{\theta}\P) = \max_{Q\in\mathcal Q_\alpha([k(\theta)]\to[k(\theta)])}I_\theta(QT_{\theta}\P).
\end{equation}
Furthermore, for every $\theta_0\in\Theta$ the function $\theta\mapsto I_{\theta_0}(Q_\theta T_\theta\P):\Theta\to\R_+$ is measurable.
\end{lemma}

With Lemma~\ref{lemma:measurability} at hand, we can now complete the definition of the optimal sequential mechanism $Q^{(n)}$ in \eqref{eq:Q^n} by setting $\hat{Q}_{n_1}(C|y,z_{1:n_1}) := Q(C|y,\tilde{\theta}_{n_1}(z_{1:n_1}))$ and $\hat{T}_{n_1}(x) := T(x,\tilde{\theta}_{n_1}(z_{1:n_1}))$ for $T$ and $Q$ as in Lemma~\ref{lemma:measurability}. In particular, we see that the composition $[\hat{Q}_{n_1}\hat{T}_{n_1}](C|x,z_{1:n_1}) = Q(C|T(x,\tilde{\theta}_{n_1}(z_{1:n_1})),\tilde{\theta}_{n_1}(z_{1:n_1}))$ defines a Markov kernel in $\mathfrak P(\X\times\mathcal Z_0^{n_1}\to\N)$ with $[\hat{Q}_{n_1}\hat{T}_{n_1}]_{z_{1:n_1}}\in\mathcal Q_\alpha(\X\to\N)$ for all $z_{1:n_1}\in\mathcal Z_0^{n_1}$. Obviously, $T$ will eventually be replaced by a consistent quantizer $T_m$ for $m=n_1$. We have thus finished the definition and proof of existence of the optimal privacy mechanism. The remainder of this section is now concerned with the consistency property in \eqref{eq:consistentQ}.

\medskip

We begin with a result which shows that optimizing $Q\mapsto I_\theta(Q\P)$ over $\mathcal Q_\alpha(\X)$, and $Q_k\mapsto I_\theta(Q_kT_k\P)$ over $\mathcal Q_\alpha([k])$ is nearly equivalent in an appropriate sense. To make this precise, fix an arbitrary $Q\in\mathcal Q_\alpha(\X\to\mathcal Z)$, $x^*\in\X$ and define $\nu(dz):= Q(dz|x^*)$. Let $\mu$ be the $\sigma$-finite measure that dominates $\P$ and for $F\in\mathcal F$, $B_j := T_k^{-1}(\{j\})$ and $j\in J:= \{l\in[k]: 0<\mu(B_l)<\infty\}$, define the probability measure $P_j(F) := \frac{\mu(B_j\cap F)}{\mu(B_j)}$. Equipped with this notation we can now define a `projection' of $Q$ onto $\mathcal Q_\alpha([k])$ with respect to the partition $(B_j)_{j\in[k]}$ by
\begin{align}\label{eq:ProjQ}
Q_k(C|j) := \begin{cases}
QP_j(C), &j\in J,\\
\nu(C), &\text{else,}
\end{cases}\quad C\in\mathcal G, j\in[k].
\end{align}
Clearly, $Q_k$ as defined above is a Markov kernel, that is, $Q_k\in\mathfrak P([k]\to\mathcal Z)$. Lemma~\ref{lemma:Q_kPriv} in the supplement shows that it is also $\alpha$-differentially private, hence $Q_k\in\mathcal Q_\alpha([k]\to\mathcal Z)$. 

\begin{lemma}\label{lemma:Delta_k}
Fix $k\in\N$, $\alpha\in(0,\infty)$ and $Q\in\mathcal Q_\alpha(\X\to \mathcal Z)$. Suppose that $(\X,\mathcal F,\P)$ is DQM at $\theta\in\Theta\subseteq\R^p$ with respect to $\mu$ and with score $s_\theta$ and set $\dot{p}_\theta(x) := s_\theta(x) p_\theta(x)\in\R^p$. If $T:\X\to[k]$ is measurable with resulting partition $B_j := \{x\in\X: T(x)=j\}$, $j=1,\dots, k$, and $Q_k\in \mathcal Q_\alpha([k]\to\mathcal Z)$ is the corresponding projection of $Q$ defined in \eqref{eq:ProjQ}, then we have
\begin{align*}
&\| I_\theta(Q\P) - I_\theta(Q_kT\P)\|_2 \\
&\quad \le C_\alpha (1\lor\trace I_\theta(\P)) \left[ \Big\|(p_\theta - \bar{p}_\theta)\mathds 1_{K_J}\Big\|_{L_1(\mu)} + \Big\|\|\dot{p}_\theta - \bar{\dot{p}}_\theta\|_2\mathds 1_{K_J}\Big\|_{L_1(\mu)} + \sqrt{P_\theta(K_J^c)}\right],
\end{align*} 
where $C_\alpha>0$ depends only on $\alpha$, $J:= \{l\in[k]: 0<\mu(B_l)<\infty\}$, $K_J := \bigcup_{j\in J}B_j$, 
$$
\bar{p}_\theta (x) := \sum_{j\in J} \frac{\int_{B_j}p_\theta(x)\mu(dx)}{\mu(B_j)} \mathds 1_{B_j}(x), 
\quad \text{ and }\quad
\bar{\dot{p}}_\theta(x) := \sum_{j\in J} \frac{\int_{B_j}\dot{p}_\theta(x)\mu(dx)}{\mu(B_j)} \mathds 1_{B_j}(x) \in\R^p.
$$
\end{lemma}

\begin{remark}\normalfont
From this lemma it is apparent that consistent quantizers as in Definition~\ref{def:quantizer} are exactly what we need to consider for $T$.
\end{remark}

At first sight, one may think that Lemma~\ref{lemma:Delta_k} is already sufficient to relate the infinite dimensional optimization problem 
$$
\sup_{Q\in\mathcal Q_\alpha(\X)} I_{\tilde{\theta}_{n_1}}(Q\P)
$$
to the parametric problem \eqref{eq:finitedimlOPt}, provided that the error term is small. However, the channel $Q_k$ in that lemma still uses an arbitrary (possibly infinite dimensional) output space $\Z$, rather than generating sanitized observations from the finite (!) and fixed input space $\X_k=[k]$, like those in \eqref{eq:finitedimlOPt}. Thus, we still have to show that it actually suffices to consider only $\alpha$-private mechanisms in $\mathcal Q_\alpha([k]\to[k])$. Fortunately, this is only a technical step which requires some work but no further assumptions.

\begin{lemma}\label{lemma:finiteReduction}
Fix $k\in\N$, $\alpha\in(0,\infty)$ and suppose the model $\P=(P_\theta)_{\theta\in\Theta}$ is DQM at $\theta\in\Theta\subseteq\R$. Then, for any measurable $T:\X\to[k]$, we have
$$
\sup_{Q\in \mathcal Q_\alpha([k])} I_\theta(QT\P) = \sup_{Q\in\mathcal Q_\alpha([k]\to[k])} I_\theta(QT\P).
$$
\end{lemma}

Finally, we need to relate the feasible objective function $Q\mapsto I_{\tilde{\theta}_{n_1}}(Q\P)$ with a consistent estimate $\tilde{\theta}_{n_1}\approx\theta$ to the true but infeasible one $Q\mapsto I_\theta(Q\P)$. That is, we need to establish an appropriate continuity property.

\begin{lemma} \label{lemma:ContInTheta}
Consider a model $\P=(P_\theta)_{\theta\in\Theta}$, $\Theta\subseteq\R^p$, on sample space $\X$ that is DQM at the points $\theta, \theta'\in\Theta$ with scores $s_\theta$ and $s_{\theta'}$. Then
\begin{align}
&\sup_{Q\in\mathcal{Q}_\alpha(\X)}\left\|I_\theta(Q \P) - I_{\theta'}(Q \P)\right\|_2 \notag\\
&\le 
e^{2\alpha}\left(\trace[I_\theta(\P)]\lor \trace[I_{\theta'}(\P)]\lor1\right)
\left[
2\Big\|\|\dot{p}_\theta-\dot{p}_{\theta'}\|_{\ell_2}\Big\|_{L_1(\mu)} + 3\|{p}_\theta - {p}_{\theta'}\|_{L_1(\mu)}
\right] =: \varphi(\theta,\theta'),\label{eq:UniformBoundIeps}
\end{align}
with $\dot{p}_\theta(x) := s_\theta(x) p_\theta(x)\in\R^p$. Moreover, if Conditions~\ref{C.DQM} and \ref{C.L2cont} hold, then $\theta'\mapsto \varphi(\theta,\theta')$ is continuous at $\theta$ for every $\theta\in\Theta$.
\end{lemma}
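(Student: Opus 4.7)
The plan is to control the difference $I_\theta(Q\P) - I_{\theta'}(Q\P)$ pointwise on $\Z$ via the integral representation from Lemma~\ref{lemma:DQM}, then take $\sup_Q$ trivially at the end (since no bound will depend on $Q$). Fixing $Q \in \mathcal Q_\alpha(\X)$, I would use Lemma~\ref{lemma:Qdensities} to select a dominating measure $\nu$ and $\nu$-densities $q(z|x) \in [e^{-\alpha}, e^\alpha]$. Setting $a_\tau(z) := \int_\X q(z|x)\dot p_\tau(x)\,\mu(dx)$ and $q_\tau(z) := \int_\X q(z|x) p_\tau(x)\,\mu(dx) \in [e^{-\alpha}, e^\alpha]$ for $\tau \in \{\theta, \theta'\}$, Lemma~\ref{lemma:DQM} yields the form $I_\tau(Q\P) = \int_\Z a_\tau^2 / q_\tau\, d\nu$. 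I would also record two routine estimates: the pointwise bounds $|a_\theta - a_{\theta'}|(z) \le e^\alpha \|\dot p_\theta - \dot p_{\theta'}\|_{L_1(\mu)}$ and $|q_\theta - q_{\theta'}|(z) \le e^\alpha \|p_\theta - p_{\theta'}\|_{L_1(\mu)}$, and (by Fubini together with $\int q(z|x)\,\nu(dz) = 1$ and the Cauchy--Schwarz bound $\int |s_\tau|p_\tau\,d\mu \le \sqrt{I_\tau(\P)}$) the integrated bound $\int_\Z |a_\tau|\,d\nu \le \sqrt{I_\tau(\P)}$.

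Next, I would split the integrand through the asymmetric algebraic identity
\[
\frac{a_\theta^2}{q_\theta} - \frac{a_{\theta'}^2}{q_{\theta'}}
= \frac{(a_\theta - a_{\theta'})(a_\theta + a_{\theta'})}{q_\theta}
+ \frac{a_{\theta'}^2(q_{\theta'} - q_\theta)}{q_\theta\, q_{\theta'}}
\]
and estimate the two summands separately. For the first one, pulling the pointwise bound on $|a_\theta - a_{\theta'}|$ outside the integral and using $1/q_\theta \le e^\alpha$ together with the integrated bound on $|a_\tau|$ yields an upper bound of $2e^{2\alpha}\sqrt{I_\theta(\P) \vee I_{\theta'}(\P)}\, \|\dot p_\theta - \dot p_{\theta'}\|_{L_1(\mu)}$.

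The step I expect to require the most care is the second summand, because the naive pointwise bound $a_{\theta'}^2 \le e^{2\alpha}I_{\theta'}(\P)$ followed by $\int |q_{\theta'} - q_\theta|\, d\nu$ costs one factor of $e^{2\alpha}$ too many. The key move is to rewrite
\[
\int_\Z \frac{a_{\theta'}^2(q_{\theta'} - q_\theta)}{q_\theta\, q_{\theta'}}\,d\nu
= \int_\Z \frac{a_{\theta'}^2}{q_{\theta'}}\cdot \frac{q_{\theta'} - q_\theta}{q_\theta}\,d\nu,
\]
recognizing that $a_{\theta'}^2/q_{\theta'}$ integrates against $\nu$ to exactly $I_{\theta'}(Q\P) \le I_{\theta'}(\P)$. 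Bounding the remaining factor $(q_{\theta'} - q_\theta)/q_\theta$ in sup-norm by $e^{2\alpha}\|p_\theta - p_{\theta'}\|_{L_1(\mu)}$ then gives $e^{2\alpha}I_{\theta'}(\P)\|p_\theta - p_{\theta'}\|_{L_1(\mu)}$ for the second summand.

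Combining the two estimates, using $\sqrt{I_\theta(\P) \vee I_{\theta'}(\P)} \le I_\theta(\P) \vee I_{\theta'}(\P) \vee 1$, and noting that $I_{\theta'}(\P) \le I_\theta(\P) \vee I_{\theta'}(\P) \vee 1$ yields a bound of the form $e^{2\alpha}(I_\theta(\P) \vee I_{\theta'}(\P) \vee 1)\bigl[2\|\dot p_\theta - \dot p_{\theta'}\|_{L_1(\mu)} + \|p_\theta - p_{\theta'}\|_{L_1(\mu)}\bigr]$, which is stronger than \eqref{eq:UniformBoundIeps} (coefficient $1 \le 3$ on the second term). Since the final estimate does not involve $Q$, taking the supremum over $\mathcal Q_\alpha(\X)$ is immediate.
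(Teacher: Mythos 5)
Your proof is correct and even yields a slightly sharper bound, with a coefficient of $1$ instead of $3$ on $\|p_\theta - p_{\theta'}\|_{L_1(\mu)}$. The route differs from the paper's in the choice of algebraic decomposition. The paper works with the score $t_\tau(z) = a_\tau(z)/q_\tau(z)$ and writes $I_\tau(Q\P) = \int t_\tau^2\, q_\tau\,d\nu$, splitting via
\[
t_\theta^2 q_\theta - t_{\theta'}^2 q_{\theta'} = (t_\theta^2 - t_{\theta'}^2)\, q_\theta + t_{\theta'}^2\,(q_\theta - q_{\theta'}).
\]
Since $t_\theta - t_{\theta'}$ is not directly controlled by the $L_1$-norms of $\dot p_\theta - \dot p_{\theta'}$ and $p_\theta - p_{\theta'}$, the paper must further decompose $(t_\theta - t_{\theta'})\,q_\theta = (a_\theta - a_{\theta'}) + t_{\theta'}(q_{\theta'} - q_\theta)$, producing an extra contribution proportional to $\|p_\theta - p_{\theta'}\|_{L_1(\mu)}$ and hence the constant $3$. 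Your decomposition in terms of the numerators $a_\tau$ and the equivalent representation $I_\tau(Q\P) = \int a_\tau^2/q_\tau\,d\nu$ avoids this: $a_\theta - a_{\theta'}$ is directly bounded pointwise by $e^\alpha\|\dot p_\theta - \dot p_{\theta'}\|_{L_1(\mu)}$, so no cross-term appears. Both proofs use the same ingredients from Lemma~\ref{lemma:DQM} and Lemma~\ref{lemma:Qdensities}, and your observation that $\int a_{\theta'}^2/q_{\theta'}\,d\nu = I_{\theta'}(Q\P) \le I_{\theta'}(\P)$ (rather than a pointwise bound on $a_{\theta'}^2$) mirrors the paper's own handling of its second term. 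The published bound with constant $3$ is what is used downstream, so the improvement is cosmetic, but your version is cleaner and saves one intermediate decomposition.
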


Lemma~\ref{lemma:ContInTheta} will be applied with the true parameter $\theta$ and $\theta'=\tilde{\theta}_{n_1}$ a consistent $\alpha$-private preliminary estimator. Thus, by the continuous mapping theorem, the error term $\varphi(\theta,\tilde{\theta}_{n_1})$ will be asymptotically negligible.
The main result of this subsection, the formal version of \eqref{eq:consistentQ}, can now be stated and proved. 

\begin{theorem}\label{thm:Qhat}
Suppose that Conditions~\ref{C.DQM}, \ref{C.measurable} and \ref{C.L2cont} are satisfied with $p=1$ and let $(T_m)_{m\in\N}$ be a consistent quantizer with associated sequence $(k_m)_{m\in\N}$ as in Definition~\ref{def:quantizer}. Moreover, fix some $Q_0\in\mathcal Q_\alpha(\X\to\mathcal Z_0)$, $n_1\in\N$, let $\tilde{\theta}_{n_1}:\mathcal Z_0^{n_1}\to\Theta$ be an estimator and write $\hat{T}_{{n_1}}(x) := T_{{n_1}}(x,\tilde{\theta}_{n_1})$ and $\hat{k}_{n_1} := k_{n_1}(\tilde{\theta}_{n_1})$. Furthermore, for $\mathcal M_\alpha(k,k)\subseteq\R^{k\times k}$ as in Lemma~\ref{lemma:finitedimlOpti}, let $\hat{Q}_{n_1}\in\mathcal M_\alpha(\hat{k}_{n_1},\hat{k}_{n_1})$ be a measurable maximizer (in the sense of Lemma~\ref{lemma:measurability}) of
$$
\max_{Q\in\mathcal M_\alpha(\hat{k}_{n_1},\hat{k}_{n_1})} I_{\tilde{\theta}_{n_1}}(Q\hat{T}_{{n_1}}\P).
$$
Then $I_{\theta_{n_1}}(\hat{Q}_{n_1}\hat{T}_{{n_1}}\P)$ is measurable as a function of the first group data $z_{1:n_1}\in\mathcal Z_0^{n_1}$ and
$$
0\le \sup_{Q\in\mathcal Q_\alpha(\X)} I_{\theta}(Q\P) - I_{\theta_{n_1}}(\hat{Q}_{n_1}\hat{T}_{{n_1}}\P) \le 
2\varphi(\theta,\tilde{\theta}_{n_1})+ \varphi(\theta,{\theta}_{n_1}) + C_\alpha (1\lor I_{\tilde{\theta}_{n_1}}(\P))\Delta_{{n_1}},
$$
for any $\theta,\theta_{n_1}\in\Theta$, where $\varphi$ is as in Lemma~\ref{lemma:ContInTheta}, $C_\alpha>0$ depends only on $\alpha$ and $\Delta_{n_1} := \Delta_{n_1}(\tilde{\theta}_{n_1})$ is as in Definition~\ref{def:quantizer}. Moreover, the upper bound converges to zero in $[Q_0P_\theta]^{n_1}$-probability, as $n_1\to\infty$, provided that $\theta_{n_1}\to\theta$ and $[Q_0P_\theta]^{n_1}(|\tilde{\theta}_{n_1} - \theta| > \eps) \to 0$ as $n_1\to\infty$, for every $\eps>0$.
\end{theorem}

\begin{proof}
Measurability of $I_{\theta_{n_1}}(\hat{Q}_{n_1}\hat{T}_{{n_1}}\P)$ follows immediately from Lemma~\ref{lemma:measurability}. For the upper bound we begin with an application of Lemma~\ref{lemma:ContInTheta}, 
%using the fact that $[\hat{Q}_{n_1}\hat{T}_{k_{n_1}}]_{z_{1:n_1}}\in\mathcal Q_\alpha(\X)$, $\forall z_{1:n_1}\in\mathcal Z_0^{n_1}$, which holds by construction and Lemma~\ref{lemma:pre-postProcessing}, 
to get
$$
\sup_{Q\in\mathcal Q_\alpha(\X)} I_{\theta}(Q\P) \le \sup_{Q\in\mathcal Q_\alpha(\X)} I_{\tilde{\theta}_{n_1}}(Q\P) + \varphi(\theta,\tilde{\theta}_{n_1}).
$$
Next, use Lemma~\ref{lemma:Delta_k} with $\theta=\tilde{\theta}_{n_1}$, $k=\hat{k}_{n_1}$ and $T=\hat{T}_{{n_1}}$, to get
$$
\sup_{Q\in\mathcal Q_\alpha(\X)} I_{\tilde{\theta}_{n_1}}(Q\P)  \le \sup_{Q\in\mathcal Q_\alpha([\hat{k}_{n_1}])} I_{\tilde{\theta}_{n_1}}(Q\hat{T}_{{n_1}}\P)  + C_\alpha (1\lor I_{\tilde{\theta}_{n_1}}(\P)){\Delta}_{n_1}(\tilde{\theta}_{n_1}).
$$
Combining the previous two inequalities and applying Lemmas~\ref{lemma:finiteReduction} and \ref{lemma:finitedimlOpti} and using the fact that $\hat{Q}_{n_1}$ is a maximizer, yields
\begin{align*}
\sup_{Q\in\mathcal Q_\alpha(\X)} I_{\theta}(Q\P) &\le
\sup_{Q\in\mathcal Q_\alpha([\hat{k}_{n_1}]\to[\hat{k}_{n_1}])}I_{\tilde{\theta}_{n_1}}(Q\hat{T}_{{n_1}}\P)+
\varphi(\theta,\tilde{\theta}_{n_1}) + C_\alpha (1\lor I_{\tilde{\theta}_{n_1}}(\P)){\Delta}_{n_1}(\tilde{\theta}_{n_1})\\
&=
I_{\tilde{\theta}_{n_1}}(\hat{Q}_{n_1}\hat{T}_{{n_1}}\P)+
\varphi(\theta,\tilde{\theta}_{n_1}) + C_\alpha (1\lor I_{\tilde{\theta}_{n_1}}(\P)){\Delta}_{n_1}(\tilde{\theta}_{n_1}).
\end{align*}
Applying Lemma~\ref{lemma:ContInTheta} twice, we can further bound this quantity by
$$
I_{{\theta}_{n_1}}(\hat{Q}_{n_1}\hat{T}_{{n_1}}\P)+
2\varphi(\theta,\tilde{\theta}_{n_1})+ \varphi(\theta,{\theta}_{n_1}) + C_\alpha (1\lor I_{\tilde{\theta}_{n_1}}(\P)){\Delta}_{n_1}(\tilde{\theta}_{n_1}).
$$
Using Tonelli's theorem it is easy to see from Definition~\ref{def:quantizer} that under our present assumptions $\theta\mapsto \Delta_{n_1}(\theta)$ is measurable. Moreover, by almost sure convergence along subsequences we can show that ${\Delta}_{n_1}(\tilde{\theta}_{n_1})$ converges to zero in $(Q_0P_\theta)^{n_1}$-probability. 
The desired convergence to zero of the upper bound now follows from continuity of $\varphi$ (cf. Lemma~\ref{lemma:ContInTheta}) and Lemma~\ref{lemma:continuity}.\eqref{lemma:continuity:trace}.
\end{proof}

%==================================================================

\subsection{Asymptotic efficiency of the two-step $\alpha$-private MLE}
\label{sec:MLEefficiency}
In this subsection we prove that the MLE based on sanitized data from the mechanism $\hat{Q}_{n_1}\hat{T}_{n_1}$ introduced in Subsection~\ref{sec:consistentMechanism} is regular and efficient. We begin by describing the complete locally private two-step estimation procedure: Given are private data $X_1,\dots, X_n \stackrel{iid}{\thicksim} P_{\theta}$, $\theta\in\Theta\subseteq\R$, from a regular statistical model $\P=\{P_\theta:\theta\in\Theta\}$ on sample space $\X\subseteq\R^d$ dominated by $\mu$.

\begin{enumerate}
\item Fix a preliminary privacy mechanism $Q_0\in\mathcal Q_\alpha(\X)$, such that $Q_0\P$ is identifiable.

\item Consider a non-interactive $\alpha$-private preliminary estimator $\tilde{\theta}_{n_1}$ based on sanitized versions $Z_1,\dots, Z_{n_1}$ of $X_1,\dots, X_{n_1}$ generated from $Q_0$, which is consistent for $\theta$ (cf. Subsection~\ref{sec:consistency}), that is, which satisfies
$$
\left[ Q_0P_{\theta}\right]^{n_1} \left( |\tilde{\theta}_{n_1} - \theta| > \eps\right) \xrightarrow[n_1\to\infty]{} 0, \quad \forall \eps>0, \forall \theta\in\Theta.
$$

\item Fix a consistent quantizer $(T_m)_{m\in\N}$ as in Definition~\ref{def:quantizer} with associated sequence $(k_m)_{m\in\N}$ and set $\hat{T}_{n_1}(x) := T_{n_1}(x,\tilde{\theta}_{n_1})$ and $\hat{k}_{n_1}:= k_{n_1}(\tilde{\theta}_{n_1})$.

\item Let $\hat{Q}_{n_1}$ be a measurable maximizer 
\begin{equation}\label{eq:max}
\hat{Q}_{n_1}\in\argmax_{Q\in\mathcal M_\alpha(\hat{k}_{n_1},\hat{k}_{n_1})} I_{\tilde{\theta}_{n_1}}(Q\hat{T}_{{n_1}}\P)
\end{equation}
(cf. Lemma~\ref{lemma:measurability} with $\theta=\tilde{\theta}_{n_1}$ and Lemma~\ref{lemma:finitedimlOpti}).

\item Generate $Z_{n_1+1},\dots, Z_n$ non-interactively from $X_{n_1+1},\dots, X_n$ using the mechanism $\hat{Q}_{n_1}\hat{T}_{{n_1}}$. More precisely, individual $i\in\{n_1+1,\dots, n\}$ computes $Y_i = \hat{T}_{{n_1}}(X_i)$ and generates $Z_i\in \{1,\dots, \hat{k}_{n_1}\}$ from $\hat{Q}_{n_1}(dz|Y_i)$ (equivalently, from the discrete distribution given by the vector $[\hat{Q}_{n_1}]_{\cdot,Y_i}\in\R^{\hat{k}_{n_1}}$).
\item Let $\hat{\theta}_{n_2}$ be an MLE in the model $[\hat{Q}_{n_1}\hat{T}_{{n_1}}\P]^{n_2}$, that is, a measurable maximizer (where it exists, and defined arbitrarily otherwise) of the conditional log-likelihood
\begin{align*}
\theta\mapsto \ell_{n_2}(\theta) &:= \ell_{n_2}(\theta; z_{n_1+1},\dots, z_n| \tilde{\theta}_{n_1}) := \frac{1}{n_2}\sum_{i=n_1+1}^n \log \hat{q}_\theta(z_i),\quad z_i\in\{1,\dots, \hat{k}_{n_1}\},
\end{align*}
over $\Theta$, where $n_2 = n-n_1$, 
$$
\hat{q}(z|x) := [\hat{Q}_{n_1}]_{z,\hat{T}_{{n_1}}(x)} = \sum_{j=1}^{\hat{k}_{n_1}}[\hat{Q}_{n_1}]_{z,j} \cdot \mathds 1_{\hat{T}_{{n_1}}^{-1}(\{j\})}(x)
$$ 
and
$$
\hat{q}_\theta(z) := \int_\X \hat{q}(z|x) p_\theta(x) \mu(dx) =  \sum_{j=1}^{\hat{k}_{n_1}}[\hat{Q}_{n_1}]_{z,j} \cdot P_\theta \left(\hat{T}_{{n_1}}^{-1}(\{j\})\right).
$$
\end{enumerate}

Notice that steps 1-5 above define the $\alpha$-sequentially interactive mechanism $Q^{(n)}$ extensively discussed in Subsection~\ref{sec:consistentMechanism}. This mechanism releases a locally private view $Z_1,\dots, Z_n$ of $X_1,\dots, X_n$. Specifically, conditional on $Z_1,\dots, Z_{n_1}$, the $Z_{n_1+1},\dots, Z_n$ are iid with counting density (pmf) $\hat{q}_\theta$ and $\hat{Q}_{n_1}\hat{T}_{{n_1}}\in\mathcal Q_\alpha(\X\to\{1,\dots, \hat{k}_{n_1}\})$ (cf. Figure~\ref{fig:Qn}). In order to fully control $\hat{q}_\theta$ also as a function in the first group data $z_{1:n_1}$, our main theorem operates under more traditional and stronger differentiability conditions than only DQM. The conclusion, however, is classical: If the MLE is consistent, then it is also efficient. 

\begin{theorem}\label{thm:MLEefficiency}
Suppose that Conditions~\ref{C.DQM}, \ref{C.measurable} and \ref{C.L2cont} are satisfied with $\Theta\subseteq\R$. Moreover, suppose that $p_\theta(x)$ is three times continuously differentiable with respect to $\theta$ for every $x\in\X$ and such that $\theta\mapsto \|\dot{p}_\theta\|_1$, $\theta\mapsto \|\ddot{p}_\theta\|_1$ and $\theta\mapsto \|\dddot{p}_\theta\|_1$ are continuous and finite and $\theta\mapsto \dddot{p}_\theta$ is continuous as a function from $\Theta\to L_1(\mu)$. 
If $I^*_{\alpha, \theta_0} := \sup_{Q\in\mathcal Q_\alpha(\X)}I_{\theta_0}(Q\P)>0$ for some $\theta_0\in\Theta$, $n_1 = n_1(n)\to\infty$, $n_1/n\to0$ and the two-step MLE $\hat{\theta}_{n_2}$ converges to $\theta_0$ in $Q^{(n)}P_{\theta_0}^n$-probability, then it is also regular and efficient at $\theta_0$, that is,
$$
\sqrt{n} \left( \hat{\theta}_{n_2} - [\theta_0 + h/\sqrt{n}] \right)  \stackrel{Q^{(n)}P_{\theta_0+h/\sqrt{n}}^n}{\rightsquigarrow} N\left(0,\left[I^*_{\alpha, \theta_0}\right]^{-1}\right), \quad \forall h\in\R,
$$
where $Q^{(n)}$ is the two-step $\alpha$-sequentially interactive mechanism generating $Z_1,\dots, Z_n$ as described above. 
\end{theorem}

\begin{proof}
For $h\in\R$ write $\theta_n := \theta_0+h/\sqrt{n}$, $P_n = Q^{(n)}P_{\theta_n}^n$ for the joint distribution of $Z_1,\dots, Z_n$ under $\theta_n$ and $\E_n$ for the corresponding expectation operator. We begin with deterministic considerations, keeping the sample $Z_1,\dots, Z_n$ fixed. First, notice that $\hat{Q}_{n_1}$ may contain zero entries, but by the $\alpha$-differential privacy and positivity constraints we have $0\le (\hat{Q}_{n_1})_{z,j} \le e^\alpha (\hat{Q}_{n_1})_{z,j'}$, for all $z,j,j'$, and hence all entries in the same row of a zero entry must also be equal to zero. Thus, for $i\ge n_1+1$ and without loss of generality, we can restrict to $z_i\in\hat{\mathcal Z} := \{z\in[\hat{k}_{n_1}]|\exists j : (\hat{Q}_{n_1})_{z,j}>0\} = \{z\in[\hat{k}_{n_1}]|\forall j : (\hat{Q}_{n_1})_{z,j}>0\}$, because, irrespective of $X_i$, data $Z_i$ with values outside of $\hat{\mathcal Z}$ will never be generated. In particular, for $z\in\hat{\mathcal Z}$, also $\hat{q}(z|x)>0$ and $\hat{q}_\theta(z)>0$.

Note that in view of Lemma~\ref{lemma:Qdiff}, $\theta\mapsto \hat{q}_\theta(z)$ is three times continuously differentiable for all $z\in\hat{\mathcal Z}$ with derivatives $\dot{\hat{q}}_\theta(z) = \int_\X \hat{q}(z|x) \dot{p}_\theta(x)\mu(dx)$, $\ddot{\hat{q}}_\theta(z) = \int_\X \hat{q}(z|x) \ddot{p}_\theta(x)\mu(dx)$ and $\dddot{\hat{q}}_\theta(z) = \int_\X \hat{q}(z|x) \dddot{p}_\theta(x)\mu(dx)$. In particular, a fraction with any one of these three functions in the numerator and $\hat{q}_\theta(z)$ in the denominator, is bounded in absolute value by $e^{2\alpha}$ times the $L_1(\mu)$-norm of the corresponding partial derivative of $p_\theta$. To see this, simply divide numerator and denominator by $\hat{q}(z|x^*)$, for some arbitrary $x^*\in\X$, and note that $e^{-\alpha}\le \frac{\hat{q}(z|x)}{\hat{q}(z|x^*)} \le e^\alpha$, by the differential privacy constraint ($\hat{Q}_{n_1}\in\mathcal M_\alpha$). Next, expand the conditional log-likelihood $\ell_{n_2}$ as follows:
\begin{align*}
0 = \dot{\ell}_{n_2}(\hat{\theta}_{n_2}) = \dot{\ell}_{n_2}(\theta_n) + \ddot{\ell}_{n_2}(\theta_n)(\hat{\theta}_{n_2}-\theta_n) + \frac{1}{2} \dddot{\ell}_{n_2}(\bar{\theta}_n)(\hat{\theta}_{n_2}-\theta_n)^2,
\end{align*}
for an intermediate value $\bar{\theta}_n$ between $\theta_n$ and $\hat{\theta}_{n_2}$.\footnote{Notice that the intermediate value $\bar{\theta}_n$ itself is not guaranteed to be measurable, but by construction clearly the remainder term is.}
Here
\begin{align*}
\dddot{\ell}_{n_2}(\theta) = \frac{1}{n_2} \sum_{i=n_1+1}^n \frac{\partial^3}{\partial \theta^3} \log \hat{q}_\theta(z_i) = 
\frac{1}{n_2} \sum_{i=n_1+1}^n \left[ \frac{\dddot{\hat{q}}_{\theta}}{\hat{q}_{\theta}} - \frac{3\dot{\hat{q}}_{\theta}\ddot{\hat{q}}_{\theta}}{\hat{q}^2_{\theta}} + \frac{2\dot{\hat{q}}^3_{\theta}}{\hat{q}^3_{\theta}}\right](z_i),
\end{align*}
which is bounded in absolute value by the continuous function $\psi(\theta):=e^{2\alpha}\|\dddot{p}_{\theta}\|_{1} + 3 e^{4\alpha}\|\dot{p}_{\theta}\|_{1}\|\ddot{p}_{\theta}\|_{1} + 2e^{6\alpha}\|\dot{p}_{\theta}\|^3_{1}$. The remainder term above can thus be written as $R_n\cdot (\hat{\theta}_{n_2}-\theta_n)$, for a measurable $R_n$ with $|R_n| \le \frac12 \sup_{t\in[\hat{\theta}_{n_2},\theta_n]}\psi(t)(\hat{\theta}_{n_2}-\theta_n)$. By contiguity (cf. Lemma~\ref{lemma:contiguity}), $\hat{\theta}_{n_2}$ also converges to $\theta_0$ (and hence also to $\theta_n$) under the sequence of local alternatives $P_n$ and thus $R_n = o_{P_n}(1)$. Consequently, we have
\begin{align*}
- \dot{\ell}_{n_2}(\theta_n) &= \left[\ddot{\ell}_{n_2}(\theta_n) + R_n\right](\hat{\theta}_{n_2}-\theta_n) 
= \left[\ddot{\ell}_{n_2}(\theta_n) + o_{P_n}(1)\right](\hat{\theta}_{n_2}-\theta_n).
\end{align*}
Moreover,
\begin{align*}
\ddot{\ell}_{n_2}(\theta_n) = \frac{1}{n_2} \sum_{i=n_1+1}^n \frac{\partial^2}{\partial \theta^2} \log \hat{q}_\theta(Z_i) \bigg|_{\theta=\theta_n} = 
\frac{1}{n_2} \sum_{i=n_1+1}^n \left[ \frac{\ddot{\hat{q}}_{\theta_n}}{\hat{q}_{\theta_n}} - \frac{\dot{\hat{q}}^2_{\theta_n}}{\hat{q}^2_{\theta_n}}\right](Z_i).
\end{align*}
Conditional on $Z_1,\dots, Z_{n_1}$, this is an average of iid terms, each of which is bounded in absolute value by $e^{2\alpha}\|\ddot{p}_{\theta_n}\|_1 + e^{4\alpha}\|\dot{p}_{\theta_n}\|^2_1$ and has conditional expectation
$$
\sum_{z\in\hat{\mathcal Z}} \left[\ddot{\hat{q}}_{\theta_n}(z) - \frac{\dot{\hat{q}}^2_{\theta_n}(z)}{\hat{q}^2_{\theta_n}(z)}\hat{q}_{\theta_n}(z)\right]
=
\left[\frac{\partial^2}{\partial \theta^2}\sum_{z\in\hat{\mathcal Z}} \hat{q}_{\theta}(z)\right]_{\theta=\theta_n} - I_{\theta_n}(\hat{Q}_{n_1}\hat{T}_{{n_1}}\P) = - I_{\theta_n}(\hat{Q}_{n_1}\hat{T}_{{n_1}}\P),
$$
where we used that $\frac{\partial^2}{\partial \theta^2}\sum_{z\in\hat{\mathcal Z}} \hat{q}_{\theta}(z) = \frac{\partial^2}{\partial \theta^2} 1 = 0$.
The expression on the right hand side of the previous display converges to $-I^*_{\alpha,\theta_0}\in[-I_{\theta_0}(\P),0)$ by Theorem~\ref{thm:Qhat} in $P_n$-probability, using contiguity again.
Thus, by the conditional Markov inequality, $\ddot{\ell}_{n_2}(\theta_n) \to - I^*_{\alpha,\theta_0}$ in $P_n$-probability. We therefore arrive at
$$
\dot{\ell}_{n_2}(\theta_n) = \left[I^*_{\alpha,\theta_0} + o_{P_n}(1)\right](\hat{\theta}_{n_2}-\theta_n).
$$
Since $I^*_{\alpha,\theta_0}>0$, we have, at least on an event of probability converging to $1$,
$$
\sqrt{n}(\hat{\theta}_{n_2}-\theta_n) = \frac{\sqrt{n}}{\sqrt{n_2}}\left[I^*_{\alpha,\theta_0} + o_{P_n}(1)\right]^{-1} \frac{1}{\sqrt{n_2}} \sum_{i=n_1+1}^n \frac{\dot{\hat{q}}_{\theta_n}(Z_i)}{\hat{q}_{\theta_n}(Z_i)}.
$$
We conclude the proof by studying the cdf 
\begin{align*}
P_n\left( \frac{1}{\sqrt{n_2}} \sum_{i=n_1+1}^n \frac{\dot{\hat{q}}_{\theta_n}(Z_i)}{\hat{q}_{\theta_n}(Z_i)} \le t\right)
=
\E_n\left[ P_n\left( \frac{1}{\sqrt{n_2}} \sum_{i=n_1+1}^n \frac{\dot{\hat{q}}_{\theta_n}(Z_i)}{\hat{q}_{\theta_n}(Z_i)} \le t\Big| Z_1,\dots, Z_{n_1}\right)\right].
\end{align*}
Conditionally, we are dealing with a scaled sample mean of iid random variables with mean zero, variance equal to $I_{\theta_n}(\hat{Q}_{n_1}\hat{T}_{{n_1}}\P)$ and absolute third moment bounded by $e^{6\alpha}\|\dot{p}_{\theta_n}\|_1^3$. Thus, by the Berry-Esseen bound, we have for all $t\in\R$,
$$
\left| P_n\left( \frac{1}{\sqrt{n_2}} \sum_{i=n_1+1}^n \frac{\dot{\hat{q}}_{\theta_n}(Z_i)}{\hat{q}_{\theta_n}(Z_i)} \le t\cdot \sqrt{I_{\theta_n}(\hat{Q}_{n_1}\hat{T}_{{n_1}}\P)}\Big| Z_1,\dots, Z_{n_1}\right) - \Phi(t)\right| \le \frac{C(\alpha,\theta_n)}{\sqrt{n_2}},
$$
at least on the event $\{I_{\theta_n}(\hat{Q}_{n_1}\hat{T}_{{n_1}}\P)>0\}$ of probability converging to $1$ and where $\Phi$ is the cdf of the standard normal distribution. Since the (deterministic) sequence $C(\alpha,\theta_n)$ is bounded by a constant multiple of $e^{6\alpha}\|\dot{p}_{\theta_n}\|_1^3$, this finishes the proof.
\end{proof}
%==================================================================

\section{Practical aspects}
\label{sec:appl}

The two-step $\alpha$-private estimation procedure introduced in Section~\ref{sec:MLE} has two practical limitations. First, the construction of a consistent quantizer (cf. Definition~\ref{def:quantizer} as well as Lemma~\ref{lemma:ConsQuantExists} and its proof) is quite involved and computationally challenging. This can be much simplified in more specific models. Second, the optimization problem in \eqref{eq:max} of the estimation procedure outlined in Subsection~\ref{sec:MLEefficiency} can be solved by the linear programming approach of \citet{Kairouz16}, but the runtime of that program is exponential in the resolution $\hat{k}_{n_1}$ of the quantizer $\hat{T}_{n_1}$. In the examples we consider below, however, we find that even for small values of $\hat{k}_{n_1}$, for which the linear program can still be solved on standard hardware, the solution approximates the optimal mechanism $Q^*\in\argmax_{Q\in\mathcal Q_\alpha(\X)} I_\theta(Q\P)$ sufficiently well.

\subsection{Location families}

In this subsection we consider models $\P_{loc}$ described by Lebesgue densities on $\X=\R$ of the form 
$$
p_\theta(x) = p(x-\theta), \quad\theta\in\Theta=\R,
$$ 
for some sufficiently regular density $p$ as follows.

\begin{enumerate}
        \setlength\leftmargin{-20pt}
\renewcommand{\theenumi}{(A\arabic{enumi})}
\renewcommand{\labelenumi}{\textbf{\theenumi}}

\item \label{A.diffable} The density $p:\X\to(0,\infty)$ is three times continuously differentiable with $p^{(j)}\in L_1$, $j=1,2,3$, and $\frac{p'}{\sqrt{p}} \in L_2(\lambda)$.
\end{enumerate}

The proof of the following result is deferred to the supplement.
 
\begin{lemma}\label{lemma:location}
If Assumption~\ref{A.diffable} is satisfied then the assumptions of Theorem~\ref{thm:MLEefficiency} hold for the location model $\P_{loc}$. Moreover, let $(k_m)_{m\in\N}$ be a sequence of positive integers. If for each $m\in\N$, $T_m:\R\to[k_m]$ is a measurable function such that for
$B_j:= B_{j,m}:= T_{m}^{-1}(\{j\})$, $J:=J_m\subseteq \{l\in[k_m]: 0<\lambda(B_{l,m})<\infty\}$, $K:=K_{J,m} := \bigcup_{j\in J}B_{j,m}$,
$$
\bar{p}(x) := \sum_{j\in J} \frac{\int_{B_j}p(y)\lambda(dy)}{\lambda(B_j)} \mathds 1_{B_j}(x), 
\; \text{ and }\;
\bar{p'}(x) := \sum_{j\in J} \frac{\int_{B_j}p'(y)\lambda(dy)}{\lambda(B_j)} \mathds 1_{B_j}(x) \in\R^p,
$$
we have
\begin{align}
\left[\Big\|(p - \bar{p})\mathds 1_{K}\Big\|_{L_1(\lambda)} + \Big\|(p' - \bar{p'})\mathds 1_{K}\Big\|_{L_1(\lambda)} + \sqrt{\|p\mathds 1_{K^c}\|_{L_1(\lambda)}} \right]\rightarrow0
\end{align} 
as $m\to\infty$, then $T_{m,\theta}(x) := T_m(x,\theta) := T_m(x-\theta)$ is a consistent quantizer for $\P_{loc}$ with associated sequence $(k_m)_{m\in\N}$ and
$$
I_\theta(QT_{m,\theta}\P_{loc}) = I_0(QT_{m,0}\P_{loc}), \quad \forall Q\in\mathcal M_\alpha(k_m,k_m), \forall \theta\in\R.
$$
\end{lemma}

Lemma~\ref{lemma:location} shows that in the location model under Assumption~\ref{A.diffable} the two-step private MLE of Subsection~\ref{sec:MLEefficiency} (if consistent) is asymptotically efficient and can be implemented with a consistent quantizer whose resolution level $k_m$ does not depend on the preliminary estimate $\tilde{\theta}_{n_1}$. All we need to do is approximate $p$ and its derivative $p'$ by histograms and then shift the resulting partition by $\tilde{\theta}_{n_1}$. Furthermore, the lemma also shows that in the location model the approximately optimal privacy mechanism $\hat{Q}_{n_1}$ in \eqref{eq:max} can be computed independently of the outcome of the preliminary estimation $\tilde{\theta}_{n_1}$. All we need to do is find
\begin{equation}\label{eq:maxLocation}
Q_{n_1}\in\argmax_{Q\in\mathcal M_\alpha(k_{n_1},k_{n_1})} I_{0}(QT_{n_1,0}\P_{loc}).
\end{equation}
In the second step we then non-interactively generate sanitized data $Z_i$ from $Q_{n_1}(dz|Y_i)$, where $Y_i = \hat{T}_{n_1}(X_i) = T_{n_1}(X_i-\tilde{\theta}_{n_1})$, that is, every individual shifts its own data $X_i$ by the preliminary estimate $\tilde{\theta}_{n_1}$, quantizes that value and applies $Q_{n_1}$ to produce a sanitized output. To give an explicit example of a consistent quantizer and of a numerical solution to \eqref{eq:maxLocation}, we further specialize to the Gaussian location model.

\subsection{The Gaussian location model}
\label{sec:GaussLoc}
Consider $p(x) = \frac{1}{\sqrt{2\pi}}e^{-\frac12 x^2}>0$ which has continuous derivatives $p'(x) = -xp(x)$, $p''(x) = -p(x) + x^2p(x)$ and $p'''(x) = 3xp(x) - x^3p(x)$. Clearly, $p',p'',p'''\in L_1$ and $p'(x)/\sqrt{p(x)} = -x\sqrt{p(x)}$, the square of which integrates to one. Hence, Assumption~\ref{A.diffable} is satisfied. The next lemma provides an explicit construction of a consistent quantizer.

\begin{lemma}\label{lemma:gauss}
Let $(k_m)_{m\in\N}$ be a sequence of even integers with $k_m\to\infty$ as $m\to \infty$ and define $B_{j} := (\Phi^{-1}(\frac{j-1}{k_m}), \Phi^{-1}(\frac{j}{k_m})]$, $j=1,\dots, k_m-1$, and $B_{k_m} := (\Phi^{-1}(1-\frac{1}{k_m}), \infty)$. Then $T_m(x) := \sum_{j=1}^{k_m} j \mathds 1_{B_j}(x)$ satisfies the condition in Lemma~\ref{lemma:location}, that is, $T_m(x,\theta) := T_m(x-\theta)$, with the sequence $(k_m)_{m\in\N}$, is a consistent quantizer for the Gaussian location model.
\end{lemma}

We can now tackle the optimization problem \eqref{eq:maxLocation} with this specific quantizer. Using Lemma~\ref{lemma:finitedimlOpti} and the abbreviations $k=k_{n_1}$, $r := [P_0T_{n_1,0}^{-1}(\{j\})]_{j=1}^{k} = [P_0(B_j)]_{j=1}^{k} = \frac{1}{k} (1,\dots, 1)^T\in\R^{k}$ and $\dot{r} := [\int_{B_j} -p'(x) dx]_{j=1}^{k}= [p\circ\Phi^{-1}(\frac{j-1}{k}) - p\circ\Phi^{-1}(\frac{j}{k})]_{j=1}^{k}\in\R^k$, we can write the objective function explicitly as
\begin{equation}\label{eq:maxGauss}
Q \mapsto I_0(QT_{n_1}\P) = \sum_{i=1}^{k} g(Q_{i\cdot}^T) = \sum_{i=1}^{k} \frac{(\sum_{j=1}^{k} Q_{ij}\dot{r}_j)^2}{\frac{1}{k}\sum_{j=1}^{k} Q_{ij}},
\end{equation}
where $g(0):=0$. Theorem~4 of \citet{Kairouz16} allows us to rewrite the optimization of this objective as a linear program:
\begin{equation}\label{eq:LP}
\begin{aligned}
	\max_{\gamma\in\R^{2^{k}}} \quad&\sum_{j=1}^{2^k}g\left( S_j^{(k)}\right)\gamma_j\\
	\text{subject to } \quad&S^{(k)}\gamma = (1,\dots, 1)^T\quad\text{and}\quad \gamma\in\R_+^{2^k},
\end{aligned}
\end{equation}
where $S^{(k)}\in\R^{k\times 2^k}$ is the stair-case matrix whose $j$-th column is defined as $S_j^{(k)} := (e^\alpha-1)b_{j-1} + (1,\dots, 1)^T\in\R^k$, where $b_j\in\R^k$ is the binary vector corresponding to the binary representation of $j$ for $j\le 2^k-1$. If we set $\hat{Q} = [S^{(k)}\text{diag}(\hat{\gamma}_1,\dots, \hat{\gamma}_{2^k})]^T$, where $\hat{\gamma}\in\R^{2^k}$ is a solution to \eqref{eq:LP}, then $\hat{Q}$ will have at most $k$ non-zero rows. Notice that by the differential privacy constraint, a zero entry in a matrix $Q\in\mathcal M_\alpha(\ell,k)$ implies that all entries in that row must be zero and removing a zero-row from $Q$ does not change the objective function \eqref{eq:maxGauss}. Thus, removing $2^k-k$ zero rows, we obtain a $\hat{Q}$ that maximizes \eqref{eq:maxGauss}. 

In Figure~\ref{fig:sim} we plot optimal objective function values of the linear program \eqref{eq:LP} (or equivalently of \eqref{eq:maxGauss}) for different values of the quantizers resolution $k$. Combining Lemma~\ref{lemma:gauss} and Theorem~\ref{thm:Qhat} with $\tilde{\theta}_{n_1} = \theta_{n_1} = \theta = 0$ and $\hat{k}_{n_1}=k$, we see that the optimal objective function values converge to the global optimum $\sup_{Q\in\mathcal Q_\alpha(\X)} I_0(Q\P)$ as $k\to\infty$, which is a supremum over an infinite dimensional space of Markov kernels. Looking at the plots in Figure~\ref{fig:sim} the approximation to this global optimum seems to be reasonably accurate already for quantizers with resolution $k=18$, which is still computationally feasible (runtime of a few minutes on an ordinary personal computer). In fact, for small values of the privacy parameter $\alpha$ (that is, when the privacy protection is strong) we even observe that the optimal objective function values are all equal whenever $k$ is even. This suggests that the optimum is already reached at $k=2$. Another curious phenomenon of the high privacy regime is exhibited in Figure~\ref{fig:outsize}. For large values of $\alpha$ the maximizer $\hat{Q}$ of \eqref{eq:maxGauss} over $\mathcal M_\alpha(k,k)$ has no zero rows and thus the sanitized data $Z_i$ may take any value in $\{1,\dots,k\}$ with positive probability. However, for smaller values of $\alpha$, we see that actually only a few rows of the optimizer $\hat{Q}$ are non-zero. In other words, sanitized data $Z_i$ will only be generated from a small set $\hat{\mathcal Z}$ of possible outcomes. In the extreme cases of $\alpha=2$ and $\alpha=1$ we see that for $k$ even, an optimal mechanism generates only binary outputs $Z_i$. Together with our observations from Figure~\ref{fig:sim}, this suggests that at least for small $\alpha$ the global optimum $\sup_{Q\in\mathcal Q_\alpha(\X)} I_0(Q\P)$ may be reached by a very simple binary mechanism that first binarizes the original Gaussian data $X_i$ (i.e., the quantizer is given by $T_m(x-\theta) = \sign(x-\theta)$) and then applies a simple randomized response mechanism as in Subsection~\ref{sec:Bernoulli}. Very recently \citet{Kalinin24} have proved this conjecture via a delicate duality argument using the linear program \eqref{eq:LP} of \citet{Kairouz16}. We refer to \citet{Kalinin24} for more details on efficient Gaussian mean estimation under local differential privacy.

\begin{figure}
\includegraphics[width=0.9\textwidth]{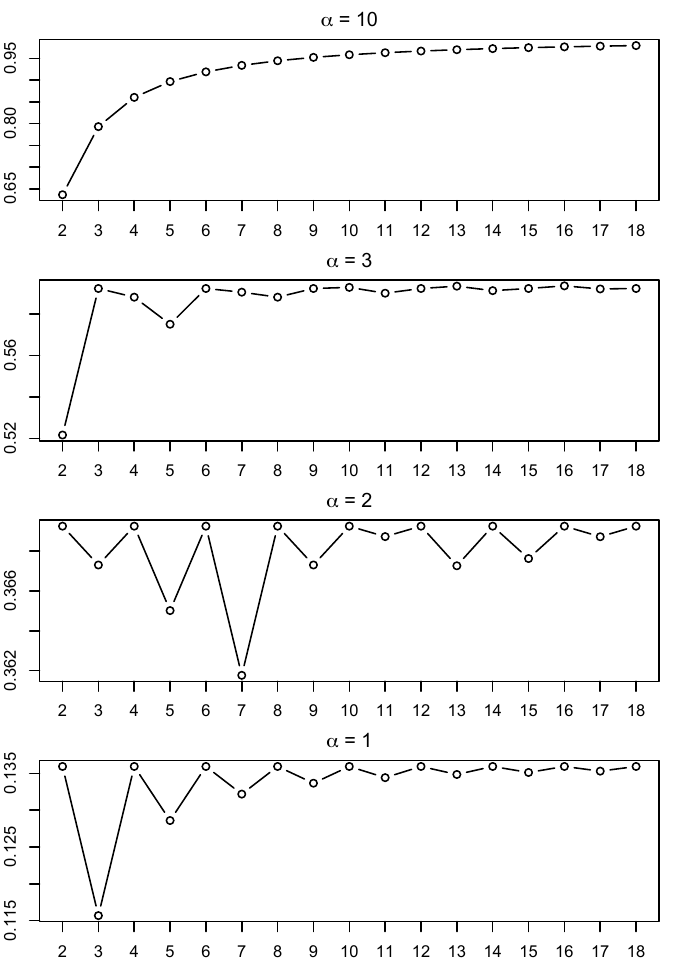} 
\caption{Optimal values of \eqref{eq:maxGauss} for different resolution levels $k$ (horizontal axis) and privacy parameters $\alpha$. For $k\to\infty$ theory predicts that these values converge to the global optimum $\sup_{Q\in\mathcal Q_\alpha(\X)} I_0(Q\P)$.}
\label{fig:sim}
\end{figure}

\begin{figure}
\includegraphics[width=0.9\textwidth]{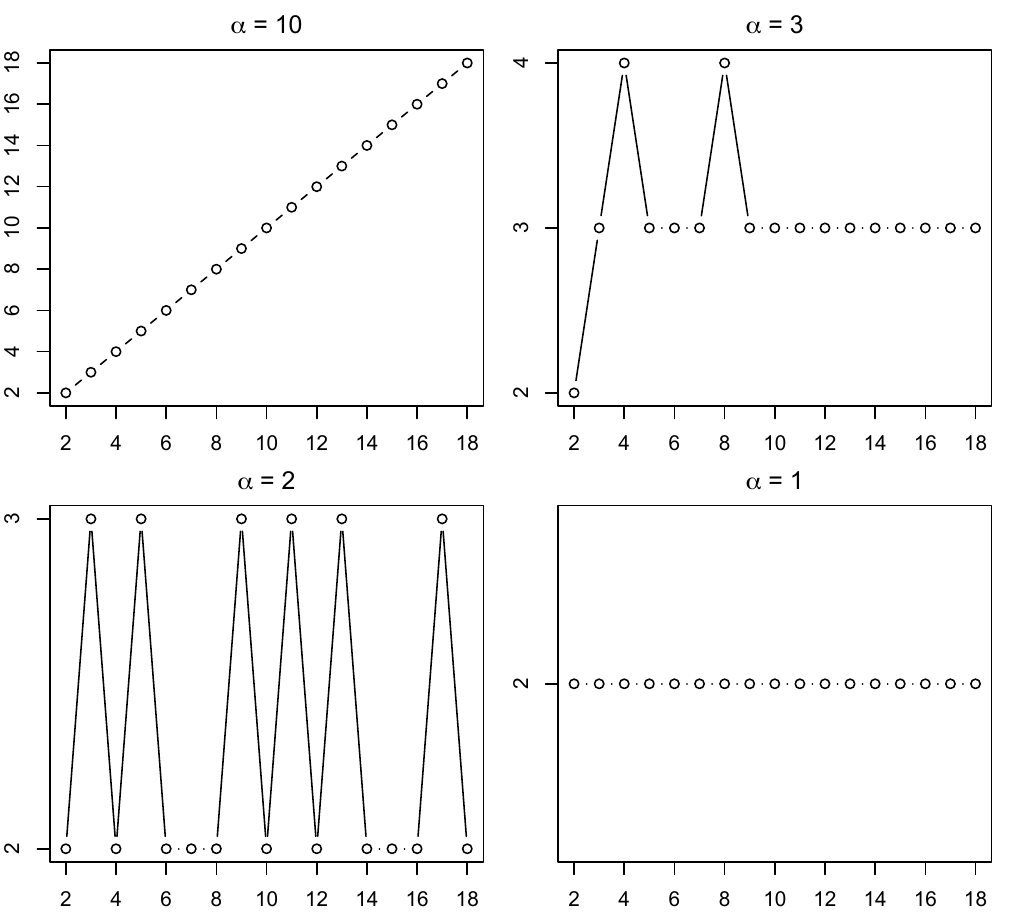} 
\caption{Number of non-zero rows (vertical axis), i.e, size of the output alphabet $\hat{\mathcal Z} \subseteq [k]$, of the optimal privacy mechanism $\hat{Q}\in\mathcal Q_\alpha([k]\to\hat{\mathcal Z})$ that maximizes \eqref{eq:maxGauss}, for different resolution levels $k$ (horizontal axis) and privacy parameters $\alpha$.}
\label{fig:outsize}
\end{figure}

\subsection{Scale families}

In this subsection we consider models $\P_{scale}$ described by Lebesgue densities on $\X=\R$ or $\X=[0,\infty)$ of the form 
$$
p_\theta(x) = \theta^{-1/2}p\left(\theta^{-1/2}x\right), \quad\theta\in\Theta:=(0,\infty),
$$ 
parametrized by the variance $\theta$, for a density $p$ that satisfies the following assumption:
\begin{enumerate}
        \setlength\leftmargin{-20pt}
\renewcommand{\theenumi}{(A\arabic{enumi})}
\renewcommand{\labelenumi}{\textbf{\theenumi}}
\stepcounter{enumi}
\item \label{A.diffableScale} The density $p:\X\to(0,\infty)$ is three times continuously differentiable such that the functions $x\mapsto x^jp^{(j)}(x)$, for $j=1,2,3$, are in $L_1$ and $x\mapsto x\frac{p'(x)}{\sqrt{p(x)}}$ is in $L_2$.
\end{enumerate}

The proof of the following result is deferred to the supplement.
 
\begin{lemma}\label{lemma:scale}
If Assumption~\ref{A.diffableScale} is satisfied then the assumptions of Theorem~\ref{thm:MLEefficiency} hold for the scale model $\P_{scale}$. Moreover, let $(k_m)_{m\in\N}$ be a sequence of positive integers. If for each $m\in\N$, $T_m:\R\to[k_m]$ is a measurable function such that for
$B_j:= B_{j,m}:= T_{m}^{-1}(\{j\})$, $J:=J_m:= \{l\in[k_m]: 0<\lambda(B_{l,m})<\infty\}$, $K:=K_{J,m} := \bigcup_{j\in J}B_{j,m}$,
$$
\bar{p}(x) := \sum_{j\in J} \frac{\int_{B_j}p(y)\lambda(dy)}{\lambda(B_j)} \mathds 1_{B_j}(x), 
\; \text{ and }\;
\bar{p'}(x) := \sum_{j\in J} \frac{\int_{B_j}p'(y)\lambda(dy)}{\lambda(B_j)} \mathds 1_{B_j}(x) \in\R^p,
$$
we have
\begin{align}
\Big\|(p - \bar{p})\mathds 1_{K}\Big\|_{L_1(\lambda)} + \Big\|(p' - \bar{p'})\mathds 1_{K}\Big\|_{L_1(\lambda)} + \sqrt{\|p\mathds 1_{K^c}\|_{L_1(\lambda)}} \rightarrow0
\end{align} 
as $m\to\infty$, then $T_{m,\theta}(x) := T_m(x,\theta) := T_m(\theta^{-1/2}x)$ is a consistent quantizer for $\P_{scale}$ with respect to $(k_m)_{m\in\N}$ and
$$
I_\theta(QT_{m,\theta}\P_{scale}) = \frac{1}{\theta^2}I_1(QT_{m,1}\P_{scale}), \quad \forall Q\in\mathcal M_\alpha(k_m,k_m), \forall \theta\in\Theta=(0,\infty).
$$
\end{lemma}

Lemma~\ref{lemma:scale} shows that in the scale model under Assumption~\ref{A.diffableScale} the two-step private MLE of Subsection~\ref{sec:MLEefficiency} is asymptotically efficient and can be implemented with a consistent quantizer whose resolution level $k_m$ does not depend on the preliminary estimate $\tilde{\theta}_{n_1}$. All we need to do is approximate $p$ and its derivative $p'$ by histograms and then scale the resulting partition by $\tilde{\theta}_{n_1}$ (cf. the quantizer $T_{m,\theta}(x) = T_m(\theta^{-1/2}x)$). Furthermore, the lemma also shows that in the scale model the approximately optimal privacy mechanism $\hat{Q}_{n_1}$ in \eqref{eq:max} can be obtained independently of the outcome of the preliminary estimation $\tilde{\theta}_{n_1}$, because $\theta$ enters the Fisher-Information only as a constant scaling. Thus, all we need to do is find
\begin{equation}\label{eq:maxScale}
Q_{n_1}\in\argmax_{Q\in\mathcal M_\alpha(k_{n_1},k_{n_1})} I_{1}(QT_{n_1,1}\P_{scale}).
\end{equation}
In the second step we then non-interactively generate sanitized data $Z_i$ from $Q_{n_1}(dz|Y_i)$, where $Y_i = \hat{T}_{n_1}(X_i) = T_{n_1}(\tilde{\theta}_{n_1}^{-1/2}X_i)$, that is, every individual scales its own data $X_i$ by the preliminary estimate $\tilde{\theta}_{n_1}$, quantizes that value and applies $Q_{n_1}$ to produce a sanitized output. To give an explicit example of a consistent quantizer and of a numerical solution to \eqref{eq:maxScale}, we further specialize to the Gaussian scale model.

\subsection{The Gaussian scale model}
\label{sec:GaussScale}
As in Subsection~\ref{sec:GaussLoc}, we consider $p(x) = \frac{1}{\sqrt{2\pi}}e^{-\frac12 x^2}>0$, which is easily seen to satisfy Assumption~\ref{A.diffableScale}. Since the conditions for a consistent quantizer in Lemma~\ref{lemma:location} and in Lemma~\ref{lemma:scale} are the same, Lemma~\ref{lemma:gauss} also provides a consistent quantizer for the scale model. Thus, we can solve the optimization problem \eqref{eq:maxScale} with this same quantizer. Using Lemma~\ref{lemma:finitedimlOpti} and the abbreviations $k=k_{n_1}$, $x_j := \Phi^{-1}(\frac{j}{k})$, $r := [P_1T_{n_1,1}^{-1}(\{j\})]_{j=1}^{k} = [P_1(B_j)]_{j=1}^{k} = \frac{1}{k} (1,\dots, 1)^T\in\R^{k}$ and $\dot{r} := -[\int_{B_j} p(x)-xp'(x) dx]_{j=1}^{k}= [x_{j-1}p(x_{j-1}) - x_jp(x_j)]_{j=1}^{k}\in\R^k$, we can write the objective function explicitly as
\begin{equation*}
Q \mapsto I_1(QT_{n_1,1}\P) = \sum_{i=1}^{k} g(Q_{i\cdot}^T) = \sum_{i=1}^{k} \frac{(\sum_{j=1}^{k} Q_{ij}\dot{r}_j)^2}{\frac{1}{k}\sum_{j=1}^{k} Q_{ij}},
\end{equation*}
where $g(0):=0$. Notice that this is the same as \eqref{eq:maxGauss} for the Gaussian location model, except that $\dot{r}$ has a slightly different form. As before, we implemented the LP of \citet{Kairouz16}. The results are shown in Figures~\ref{fig:simScale} and \ref{fig:outsizeScale}. Again, it seems that we are pretty close to convergence already for rather small values of $k$. Thus, computing time does not seem to be an issue in spite of the exponential runtime of the LP. Moreover, similar to the location model, we again observe that the optimal privacy mechanism $\hat{Q}_{n_1}$ releases binary sanitized data $Z_i$ in the high privacy regime (cf. Figure~\ref{fig:outsizeScale}). Finally, notice the pathology in the plots in the case $k=2$. In that case the quantizer corresponds to the partition $B_1=(-\infty,0]$, $B_2=(0,\infty)$. Thus, when computing $I_1(QT_{n_1,1}\P_{scale})$, we only record the sign of a centered Gaussian random variable, which does not contain any information about its variance. Thus, $I_1(QT_{n_1,1}\P_{scale})=0$, irrespective of the mechanism $Q$.

\begin{figure}
\includegraphics[width=0.9\textwidth]{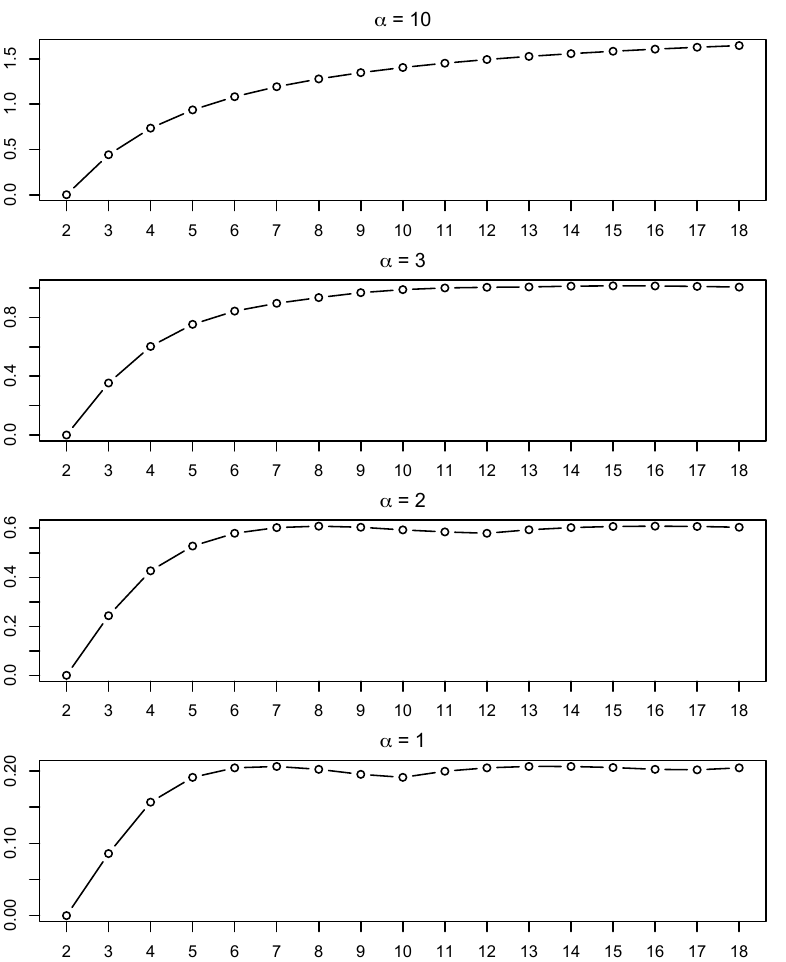} 
\caption{Optimal values of private Fisher-Information $Q\mapsto I_1(QT_{n_1,1}\P_{scale})$ in the Gaussian scale model for different resolution levels $k$ (horizontal axis) and privacy parameters $\alpha$. Theory predicts that these values converge to the global optimum $\sup_{Q\in\mathcal Q_\alpha(\X)} I_1(Q\P_{scale})$, for $k\to\infty$.}
\label{fig:simScale}
\end{figure}

\begin{figure}
\includegraphics[width=0.9\textwidth]{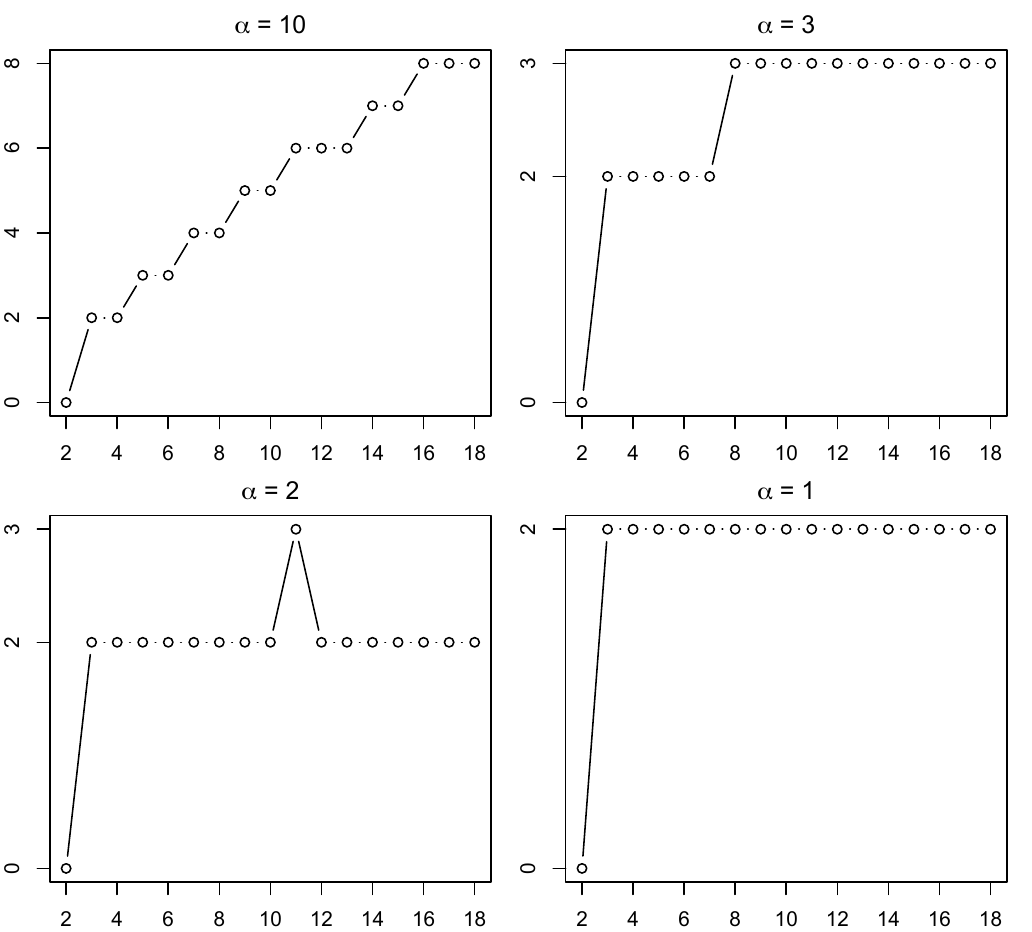} 
\caption{Number of non-zero rows, i.e, size of the output alphabet $\hat{\mathcal Z} \subseteq [k]$, of the optimal privacy mechanism $\hat{Q}\in\mathcal Q_\alpha([k]\to\hat{\mathcal Z})$ in the Gaussian scale model, for different resolution levels $k$ (horizontal axis) and privacy parameters $\alpha$.}
\label{fig:outsizeScale}
\end{figure}

%==================================================================

\begin{funding}
This research was supported by the Austrian Science Fund (FWF): I~5484-N, as part of the Research Unit 5381 of the German Research Foundation. 
\end{funding}

%==================================================================

\appendix

\section{Technical lemmas and proofs of Section~\ref{sec:LB}}
\label{sec:app:LB}

\begin{lemma}\label{lemma:DQMbound}
Fix $\alpha>0$ and consider a measurable space $(\X,\mathcal F)$ and probability measures $P_0,P_1$ on $(\X,\mathcal F)$ that are both dominated by the $\sigma$-finite measure $\mu$ with densities $p_j(x)=\frac{dP_j}{d\mu}(x)$, $j=0,1$. Let $q:\X\to[0,\infty)$ and $s:\X\to\R^p$ be measurable functions such that $e^{-\alpha}\le q\le e^\alpha$, $\mu$-almost surely, and $c:=\E_{P_0}[\|s\|^2]<\infty$, and set $q_j:= \int_\X q(x)p_j(x)\mu(dx)$, $j=0,1$. Then $q_j\in[e^{-\alpha}, e^\alpha]$, $j=0,1$, and
\begin{equation}\label{eq:DQMbound:score}
t := \int_\X s(x)\frac{q(x)p_0(x)}{q_0} \mu(dx)
\end{equation}
is well defined and bounded by $\|t\|\le e^{2\alpha}\sqrt{\E_{P_0}[\|s\|^2]}<\infty$. Moreover, for all $h\in\R^p$ with $\|h\|\sqrt{c} < e^{-2\alpha}$, we have
\begin{align*}
\left( \sqrt{q_1} - \sqrt{q_0} - \frac{1}{2} h^T t\sqrt{q_0}\right)^2
&\le
2e^{3\alpha}\left[ 2\Psi(h) + \frac{1}{2}\|h\|^2 c \left(\Psi(h)
+ \frac14 \|h\|^2c\right)\right]\\
&\quad\quad+ \frac{1}{32}(e^{-\alpha} - e^\alpha \|h\|\sqrt{c})^{-3}\|h\|^4e^{4\alpha}c^2,
\end{align*}
where
$$
\Psi(h) := \int_\X \left( \sqrt{p_{1}(x)} - \sqrt{p_0(x)} - \frac12 h^Ts(x)\sqrt{p_0(x)}\right)^2\mu(dx).
$$
\end{lemma}

\begin{proof}
Without loss of generality we can assume that $q:\X\to[e^{-\alpha}, e^\alpha]$ because changing $q$ on a $\mu$-null set does not affect $q_j$ and $t$. Notice that $t$ is well defined and finite, because $\int_\X \|s(x)\|^2 \frac{q(x)p_0(x)}{q_0}\mu(dx) \le e^{2\alpha} \E_{P_0}[\|s\|^2] <\infty$. For sufficiently small $\|h\|$ as in the statement of the lemma, we have $q_0 + h^Tt q_0\ge e^{-\alpha} - e^\alpha \|h\|\E_{P_0}[\|s\|]\ge e^{-\alpha} - e^\alpha \|h\|\sqrt{c} > 0$. Now
\begin{align*}
&\left|\sqrt{q_{1}} - \sqrt{q_0 + h^Tt q_0}\right| 
=
\left|\frac{q_{1} - q_0 - h^Tt q_0}{\sqrt{q_{1}}+\sqrt{q_0 + h^Tt q_0}} \right|\\
&\le
e^{\alpha/2} \int_\X q(x) \left| p_{1}(x) - p_0(x) - h^Ts(x)p_0(x)\right|\mu(dx) \\
&\le
e^{3\alpha/2}
\int_\X \left| p_{1} - p_0 - h^Tsp_0\right|d\mu\\
&\le e^{3\alpha/2}
\Bigg[\int_\X \left| p_{1} - p_0 - \frac12 h^Ts\sqrt{p_0}(\sqrt{p_1}+\sqrt{p_0})\right|d\mu\\
&\hspace{4cm}+ \int_\X \left| \frac12 h^Ts\sqrt{p_0}(\sqrt{p_1}-\sqrt{p_0})\right|d\mu
\Bigg].
\end{align*}
Furthermore, we have
\begin{align*}
&\int_\X \left| p_{1} - p_0 - \frac12 h^Ts\sqrt{p_0}(\sqrt{p_1}+\sqrt{p_0})\right|d\mu\\
&= \int_\X \left| \sqrt{p_1}+\sqrt{p_0}\right| \left| \sqrt{p_{1}} - \sqrt{p_0} - \frac12 h^Ts\sqrt{p_0}\right|d\mu\\
&\le \sqrt{\int_\X \left| \sqrt{p_1}+\sqrt{p_0}\right|^2d\mu} \sqrt{\int_\X \left| \sqrt{p_{1}} - \sqrt{p_0} - \frac12 h^Ts\sqrt{p_0}\right|^2d\mu}\\
&\le 2\sqrt{\Psi(h)},
\end{align*}
and
\begin{align*}
&\int_\X \left| \frac12 h^Ts\sqrt{p_0}(\sqrt{p_1}-\sqrt{p_0})\right|d\mu\\
&\le \frac12 \sqrt{\int_\X \left| h^Ts\sqrt{p_0}\right|^2d\mu} \sqrt{\int_\X \left|\sqrt{p_1}-\sqrt{p_0}\right|^2d\mu}\\
&\le \frac{1}{\sqrt{2}} \|h\| \sqrt{c} \sqrt{\int_\X \left|\sqrt{p_1}-\sqrt{p_0}-\frac12 h^Ts\sqrt{p_0}\right|^2d\mu
+ \int_X\left|\frac12 h^Ts\sqrt{p_0}\right|^2d\mu}\\
&\le
\frac{1}{\sqrt{2}} \|h\| \sqrt{c} \sqrt{\Psi(h)
+ \frac14 \|h\|^2c}.
\end{align*}
Hence, we have
\begin{align}
\left|\sqrt{q_{1}} - \sqrt{q_0 + h^Tt q_0}\right|
\le
e^{3\alpha/2}\left[ 2\sqrt{\Psi(h)} + \frac{1}{\sqrt{2}}\|h\| \sqrt{c} \sqrt{\Psi(h)
+ \frac14 \|h\|^2c}\right].\label{eq:DQMbound:1}
\end{align}
Next, we do a second order expansion of the square root $\sqrt{q_0 + h^Tt q_0}$ around $q_0$ to get
\begin{align*}
\left( \sqrt{q_0 + h^Tt q_0} - \sqrt{q_{0}} - \frac12 h^T t\sqrt{q_0}\right)^2
&=
\left( 
- \frac18 \zeta^{-3/2} [h^Ttq_0]^2
\right)^2\\
&\le
\frac{1}{64} \zeta^{-3} \|h\|^4 e^{4\alpha} (\E_{P_0}[\|s\|^2])^2,
\end{align*}
where $\zeta$ is an intermediate value between $q_0 + h^Tt q_0$ and $q_0$. At the beginning of the proof we already had $q_0 + h^Tt q_0\ge e^{-\alpha} - e^\alpha \|h\|\sqrt{c}>0$, and thus 
$$
|\zeta^{-3/2}|\le (e^{-\alpha} - e^\alpha \|h\|\sqrt{c})^{-3/2}.
$$
We therefore arrive at
\begin{align*}
\left( \sqrt{q_1} - \sqrt{q_0} - \frac{1}{2} h^T t\sqrt{q_0}\right)^2  
&\le 2 \left( \sqrt{q_1} - \sqrt{q_0 + h^Tt q_0}\right)^2  \\
&\quad +
2 \left( \sqrt{q_0 + h^Tt q_0} - \sqrt{q_{0}} - \frac12 h^T t\sqrt{q_0}\right)^2\\
&\le
2e^{3\alpha}\left[ 4\Psi(h) + \frac{1}{2}\|h\|^2 c \left(\Psi(h)
+ \frac14 \|h\|^2c\right)\right]\\
&\quad+ \frac{1}{32}(e^{-\alpha} - e^\alpha \|h\|\sqrt{c})^{-3}\|h\|^4e^{4\alpha}c^2.
\end{align*}
\end{proof}

\subsection{Proof of Theorem~\ref{thm:LAMN}}
Fix $\theta\in\Theta$ and write $X_1,\dots, X_n$ for the original data, which are iid random variables with distribution $P_\theta$, and let $(Z_1,\dots, Z_n)\thicksim Q^{(n)}P_\theta^n =:R_{n,\theta}$ denote the sanitized data. We adopt the notation of Lemma~\ref{lemma:QdensitiesSI}. For $i\in[n]$, let $r_{i,\theta}$ denote a joint $\nu^{(i)}$-density of $Z_1,\dots, Z_i$ as in part~\ref{lemma:QdensitiesSI:a}) of that Lemma. By part~\ref{lemma:QdensitiesSI:b}) of the same lemma we can pick a measurable function $q_i:\mathcal Z_i\times\X\times\mathcal Z^{(i-1)}\to[0,\infty)$ independent of $\theta$, such that for every $\theta\in\Theta$ and for every $z_{1:i-1}\in\mathcal Z^{(i-1)}$, $(z,x)\mapsto q_i(z|x,z_{1:i-1}) p_\theta(x)$ is a joint $\nu_{z_{1:i-1}}\otimes\mu$-density of $Q_{z_{1:i-1}}(dz|x)P_\theta(dx)$, where $p_\theta = \frac{dP_\theta}{d\mu}$. Moreover, for every $z_{1:i-1}\in\mathcal Z^{(i-1)}$ and for $\nu_{z_{1:i-1}}\otimes\mu$-almost all $(z,x)$, we have $q_i(z|x,z_{1:i-1}) \in[e^{-\alpha},e^\alpha]$ and write $N(z_{1:i-1})$ for the corresponding null set and $N(z_{1:i-1},x)$ for its $x$-section and $N(z_{1:i-1},z)$ for its $z$-section, that is, $z\in N(z_{1:i-1},x)$ if, and only if, $x\in N(z_{1:i-1},z)$, if and only if $(z,x)\in N(z_{1:i-1})$. In particular, we have $0 = \nu_{z_{1:i-1}}\otimes \mu (N(z_{1:i-1})) = \int_\X \nu_{z_{1:i-1}}(N(z_{1:i-1},x))\mu(dx)$, such that $N(z_{1:i-1},x)$ is a $\nu_{z_{1:i-1}}$-null set for all $x$ outside of a $\mu$-null set $N_\mu(z_{1:i-1})$ and similarly, $N(z_{1:i-1},z)$ is a $\mu$-null set for all $z$ outside of a $\nu_{z_{1:i-1}}$-null set $N_\nu(z_{1:i-1})$. 

Now, for fixed $z_{1:i-1}\in\mathcal Z^{(i-1)}$ and $z\notin N_\nu(z_{1:i-1})$, we have $\mu(\{x: e^{-\alpha}\le q_i(z|x,z_{1:i-1}) \le e^\alpha\}) \ge \mu(N(z_{1:i-1},z)^c) = 1$. Thus, we may apply Lemma~\ref{lemma:DQMbound} to get for every $\theta\in\Theta$, $h\in\R^p$ with $\|h\|\sqrt{c}\le e^{-2\alpha}$ and $\theta+h\in\Theta$,
\begin{align}
&\left( \sqrt{q_{i,\theta+h}(z|z_{1:i-1})} - \sqrt{q_{i,\theta}(z|z_{1:i-1})} - \frac12 h^T t_{i,\theta}(z|z_{1:i-1})\sqrt{q_{i,\theta}(z|z_{1:i-1})}\right)^2\notag \\
&\quad\le
2e^{3\alpha}\left[ 2\Psi(h) + \frac{1}{2}\|h\|^2 c \left(\Psi(h)
+ \frac14 \|h\|^2c\right)\right]
+ \frac{1}{32}(e^{-\alpha} - e^\alpha \|h\|\sqrt{c})^{-3}\|h\|^4e^{4\alpha}c^2, \label{eq:CondDQM}
\end{align}
where $q_{i,\theta}(z|z_{1:i-1}) := \int_\X q_i(z|x,z_{1:i-1}) p_\theta(x) \mu(dx)$, $c:= c_\theta := \E_\theta[\|s_\theta\|^2]$, 
$$
\Psi(h) := \int_\X \left( \sqrt{p_{\theta+h}(x)} - \sqrt{p_\theta(x)} - \frac12 h^Ts_\theta(x)\sqrt{p_\theta(x)}\right)^2\mu(dx),
$$
and 
$$
t_{i,\theta}(z|z_{1:i-1}) := \int_\X s_{\theta}(x) \frac{q_i(z|x,z_{1:i-1})p_{\theta}(x)}{q_{i,\theta}(z|z_{1:i-1})}\mu(dx),
$$ 
which is bounded by $\|t_{i,\theta}(z|z_{1:i-1})\|\le e^{2\alpha}\sqrt{I_\theta(\P)}$.
Also observe that $z\mapsto q_{i,\theta}(z|z_{1:i-1})$ is a $\nu_{z_{1:i-1}}$-density of $Q_{z_{1:i-1}}P_\theta$, a fact we will use repeatedly throughout this proof. Therefore, since $N_\nu(z_{1:i-1})$ is a $\nu_{z_{1:i-1}}$-null set and the upper bound in \eqref{eq:CondDQM} is $o(\|h\|^2)$ if $\P$ is DQM at $\theta\in\Theta$, we conclude that $z\mapsto t_{i,\theta}(z|z_{1:i-1})$ is a score at $\theta$ in the model $Q_{z_{1:i-1}}\P$ which is measurable in $z_{1:i}\in\mathcal Z^{(i)}$, by construction. In particular, the Fisher-Information is given by the variance of the score, that is, for all $z_{1:i-1}\in\mathcal Z^{(i-1)}$,
$$
I_\theta(Q_{z_{1:i-1}}\P) = \int_{\mathcal Z_i} t_{i,\theta}(z|z_{1:i-1})t_{i,\theta}(z|z_{1:i-1})^T [Q_{z_{1:i-1}}P_\theta](dz)
$$
and $\|t_{i,\theta}(z|z_{1:i-1})\|\le e^{2\alpha}\sqrt{I_\theta(\P)}$ for $\nu_{z_{1:i-1}}$-almost all $z$. Chasing the definitions we conclude that the Fisher-Information matrix is measurable as a function of $z_{1:i-1}\in\mathcal Z^{(i-1)}$.

Next, we aim for part~1 of Definition~\ref{def:LAMN}. As in part~\ref{lemma:QdensitiesSI:c}) of Lemma~\ref{lemma:QdensitiesSI}, we have $q_{i,\theta}(z_i|z_{1:i-1}) r_{i-1,\theta}(z_{1:i-1}) = r_{i,\theta}(z_1,\dots, z_i)$, $\nu^{(i)}$-almost surely, and thus $r_{n,\theta}(z_1,\dots, z_n) = \prod_{i=1}^n q_{i,\theta}(z_i|z_{1:i-1})$ is a $\nu^{(n)}$-density of the distribution $R_{n,\theta}$ of the sanitized data $Z_1,\dots, Z_n$. For any $\theta,\theta'\in\Theta$ and $i\in[n]$ the probability measures $\nu^{(i)}$, $Q^{(i)}P_\theta^i$ and $Q^{(i)}P_{\theta'}$ are all bounded by a multiple of $e^{i\alpha}$ of each other (cf. Lemma~\ref{lemma:QdensitiesSI}). In particular, this entails that $0<r_{i,\theta} \le e^{i\alpha} r_{i,\theta'}$ holds $\nu^{(i)}$-almost surely. Fix $h\in\R^p$, set $h_n:=h/\sqrt{n}$ and notice that for all $i\in[n]$ and outside of a $\nu^{(i)}$-null set $N_i\in\mathcal G^{(i)}$,
$$
\eta_{i,\theta,h_n}(z_{1:i}) := \sqrt{\frac{q_{i,\theta+h_n}(z_i|z_{1:i-1})}{q_{i,\theta}(z_i|z_{1:i-1})}} - 1 
=
\sqrt{\frac{r_{i,\theta+h_n}(z_{1:i})}{r_{i,\theta}(z_{1:i})}\frac{r_{i-1,\theta}(z_{1:i-1})}{r_{i-1,\theta+h_n}(z_{1:i-1})}} - 1 
 \ge e^{-i\alpha}-1.
$$
Now expand $x\mapsto \log(1+x)$ around $0$ to third order and operate on the complement of the $\nu^{(n)}$-null set $N= \bigcup_{i=1}^n \left(N_i\times \mathcal Z_{i+1} \times \cdots \times \mathcal Z_n\right)$, to get
\begin{align}
\Lambda_n(\theta+&h_n,\theta)(z) = \log\frac{d R_{n,\theta+h_n}}{d R_{n,\theta}}(z) = \log\prod_{i=1}^n\frac{q_{i,\theta+h_n}(z_i|z_{1:i-1})}{q_{i,\theta}(z_i|z_{1:i-1})}\notag\\
&=2 \sum_{i=1}^n \log\left(1 +  \eta_{i,\theta,h_n}(z_{1:i})\right)\notag\\
&=2\sum_{i=1}^n \eta_{i,\theta,h_n}(z_{1:i}) - \sum_{i=1}^n \eta_{i,\theta,h_n}(z_{1:i})^2  + \sum_{i=1}^n \frac{2}{3[1+\xi_{i,\theta,h_n}(z_{1:i})]^3} \eta_{i,\theta,h_n}(z_{1:i})^3,\label{eq:proof:LAMN:expansion}
\end{align}
where $\xi_{i,\theta,h_n}(z_{1:i})$ is an intermediate value between $0$ and $\eta_{i,\theta,h_n}(z_{1:i})\ge e^{-\alpha}-1$. Thus, $\frac23 [1+\xi_{i,\theta,h_n}(z_{1:i})]^{-3} \le \frac23 e^{3\alpha}$ and the last term in the previous display (which is clearly measurable as all other terms in the equation are) is bounded in absolute value by
\begin{equation}\label{eq:proof:LAMN:o1}
\frac23 e^{3\alpha} \max_{j=1,\dots, n} |\eta_{j,\theta,h_n}(z_{1:j})| \sum_{i=1}^n \eta_{i,\theta,h_n}(z_{1:i})^2.
\end{equation}
Now, with $U_i := \eta_{i,\theta,h_n} - \frac12 h_n^Tt_{i,\theta} = (\sqrt{q_{i,\theta+h_n}}  - \sqrt{q_{i,\theta}} -\frac12 h_n^T t_{i,\theta}\sqrt{q_{i,\theta}})/\sqrt{q_{i,\theta}}$,
\begin{align*}
&R_{n,\theta} \left( \max_{i=1,\dots, n} |\eta_{i,\theta,h_n}| >\eps \right) 
\le
\sum_{i=1}^n R_{n,\theta}\left( |\eta_{i,\theta,h_n}| >\eps\right)\\
&\quad\le
\sum_{i=1}^n \left[R_{n,\theta}\left( \left|\eta_{i,\theta,h_n} - \frac12 h_n^Tt_{i,\theta}\right| >\frac{\eps}{2}\right)
 +R_{n,\theta}\left( \left|\frac12 h_n^Tt_{i,\theta}\right| >\frac{\eps}{2}\right)\right]\\
 &\quad\le 
\left(\frac{2}{\eps}\right)^2\E_{n,\theta}\left[ \sum_{i=1}^n  \E_{n,\theta}\left[ U_i^2| Z_{1:i-1}\right] \right]
 + n \mathds 1_{\left\{e^{2\alpha}\sqrt{\trace{I_\theta(\P)}}\|h\| > \eps\sqrt{n} \right\}}.
\end{align*}
For all sufficiently large $n$, the last term in the previous display is constant equal to zero. Moreover, we already showed in \eqref{eq:CondDQM}  that a version of $\E_{n,\theta}[U_i^2|Z_{1:i-1}]$ is bounded, independently of $i$, by a deterministic quantity that is $o(\|h_n\|^2) = o(n^{-1})$ as $n\to\infty$. We conclude that $\max_{i=1,\dots, n} |\eta_{i,\theta,h_n}| = o_P(1)$. 

For the sum of squared $\eta_{i,\theta,h_n}$ we write
\begin{align}
\sum_{i=1}^n &\eta_{i,\theta,h_n}(z_{1:i})^2 - \frac14 h_n^T\Sigma_{n,\theta}(z_{1:n-1})h_n \label{eq:sumsquEta}\\
&=
\sum_{i=1}^n  \left[\eta_{i,\theta,h_n}(z_{1:i})^2 - \frac14 h_n^TI_{\theta}(Q_{z_{1:i-1}}\P)h_n\right]\notag\\
&=
\sum_{i=1}^n  \left[\eta_{i,\theta,h_n}(z_{1:i})^2 - \left( \frac12 h_n^T t_{i,\theta}(z_{1:i})\right)^2\right]  \label{eq:sumsquEta1}\\
&\quad+
\sum_{i=1}^n \left[\left( \frac12 h_n^T t_{i,\theta}(z_{1:i})\right)^2  - \frac14 h_n^T I_{\theta}(Q_{z_{1:i-1}}\P)h_n\right] .\label{eq:sumsquEta2}
\end{align}
For the summands in \eqref{eq:sumsquEta1} use Lemma~\ref{lemma:c-d} to bound their expected absolute value by
\begin{align*}
(1+\gamma) \E_{n,\theta}\left[ \left| \eta_{i,\theta,h_n} - \frac12 h_n^T t_{i,\theta}\right|^2\right] +\frac{1}{\gamma}\E_{n,\theta}\left[\left( \frac12 h_n^T t_{i,\theta}\right)^2 \right],
\end{align*}
for arbitrary $\gamma>0$.
But we already showed that the first of these two expectations is bounded independently of $i$ by $o(n^{-1})$. For the second we use the previously established almost sure bound $\|t_{i,\theta}\|\le e^{2\alpha}\sqrt{\trace{I_{\theta}(\P)}}$ and $\|h_n\|^2 = \|h\|^2/n$. Thus,
$$
\limsup_{n\to\infty} E_{n,\theta}(|\eqref{eq:sumsquEta1}|) \le \frac{\|h\|^2e^{4\alpha}\trace{I_{\theta}(\P)}}{4\gamma} \xrightarrow[\gamma\to\infty]{}0.
$$ 
For \eqref{eq:sumsquEta2}, write $Y_i(z_{1:i}) := \left(\frac12 h_n^T t_{i,\theta}(z_{1:i})\right)^2  - \frac14 h_n^T I_{\theta}(Q_{z_{1:i-1}}\P)h_n$ and recall that $I_\theta(Q_{Z_{1:i-1}}\P) = \E_{n,\theta}\left[t_{i,\theta}t_{i,\theta}^T\Big| Z_{1:i-1}\right]$, almost surely. Therefore, $\E_{n,\theta}[Y_i|Z_1,\dots, Z_{i-1}] =0$ almost surely, for $i<j$ we have $\E_{n,\theta}[Y_iY_j] = \E_{n,\theta}(Y_i\E_{n,\theta}[Y_j|Z_1,\dots, Z_{j-1}])=0$ and $Y_i^2 \le \frac{1}{16} \|h_n\|^4 (e^{8\alpha}+1)[\trace I_\theta(\P)]^2 = O(n^{-2})$, where we used that $\trace{I_{\theta}(Q_{z_{1:i-1}}\P)}\le \trace{I_\theta(\P)}$ in view of Lemma~\ref{lemma:DQM}. Hence \eqref{eq:sumsquEta} converges to zero in $R_{n,\theta}$-probability. Since the second term of \eqref{eq:sumsquEta} is bounded by $\frac14h^TI_\theta(\P)h$ (again, use Lemma~\ref{lemma:DQM}), we also see that $\sum_{i=1}^n \eta_{i,\theta,h_n}^2 = O_P(1)$ and thus \eqref{eq:proof:LAMN:o1} is $o_P(1)$. Altogether we already showed that
\begin{align*}
\Lambda_n(\theta+h_n,\theta) 
=2\sum_{i=1}^n \eta_{i,\theta,h_n} - \frac14 h^T\Sigma_{n,\theta}h+ o_P(1).
\end{align*}
Finally, write
\begin{align}
&2\sum_{i=1}^n \eta_{i,\theta,h_n} - \left( h^T\Delta_{n,\theta} - \frac14 h^T\Sigma_{n,\theta}h\right)\notag\\
&=
2\sum_{i=1}^n \left(U_i - \E_{n,\theta}[U_i|Z_{1:i-1}]\right) + 2\sum_{i=1}^n \left(\E_{n,\theta}[U_i|Z_{1:i-1}] +\frac12 \E_{n,\theta}\left[ \left( \frac12 h_n^Tt_{i,\theta}\right)^2\Bigg| Z_{1:i-1}\right] \right).\label{eq:proof:sumEtai}
\end{align}
The first sum has mean zero and variance bounded by the expectation of $\sum_{i=1}^n \E_{n,\theta}[U_i^2|Z_{1:i-1}]$ of which we already showed that it is almost surely bounded by $n\cdot o(n^{-1})$. For the second sum, notice that $\E_{n,\theta}[U_i|Z_{1:i-1}] = \E_{n,\theta}[\eta_{i,\theta,h_n} |Z_{1:i-1}] - \frac12 h_n^T\E_{n,\theta}[ t_{i,\theta}|Z_{1:i-1}]$. Because $t_{i,\theta}(z|z_{1:i-1})$ is a score in $Q_{z_{1:i-1}}\P$, we have
\begin{align*}
&\E_{n,\theta}[ t_{i,\theta}|Z_{1:i-1}=z_{1:i-1}] =  
\int_{\mathcal Z_i} t_{i,\theta}(z|z_{1:i-1}) Q_{z_{1:i-1}}P_\theta(dz)=0.
\end{align*}
Moreover,
\begin{align*}
&\E_{n,\theta}[ \eta_{i,\theta,h_n} |Z_{1:i-1}=z_{1:i-1}] =  \int_{\mathcal Z_i} \eta_{i,\theta,h_n}(z_{1:i})q_{i,\theta}(z_i|z_{1:i-1}) \nu_{z_{1:i-1}}(dz_i)\\
&\quad =\int_{\mathcal Z_i} \sqrt{q_{i,\theta+h_n}(z_i|z_{1:i-1})q_{i,\theta}(z_i|z_{1:i-1})} \nu_{z_{1:i-1}}(dz_i) - 1\\
&\quad= -\frac12 \int_{\mathcal Z_i} \left[q_{i,\theta+h_n}(z_i|z_{1:i-1}) - 2\sqrt{q_{i,\theta+h_n}(z_i|z_{1:i-1}) q_{i,\theta}(z_i|z_{1:i-1})} + q_{i,\theta}(z_i|z_{1:i-1}) \right]\nu_{z_{1:i-1}}(dz_i)  \\
&\quad= -\frac12 \int_{\mathcal Z_i} \left[\sqrt{q_{i,\theta+h_n}(z_i|z_{1:i-1})} - \sqrt{q_{i,\theta}(z_i|z_{1:i-1})}\right]^2 \nu_{z_{1:i-1}}(dz_i) \\
&\quad= -\frac12 \E_{n,\theta}\left[\eta_{i,\theta,h_n}^2 \Big|Z_{1:i-1}=z_{1:i-1}\right].
\end{align*}
Thus, $\E_{n,\theta}[U_i|Z_{1:i-1}] = -\frac12 \E_{n,\theta}[\eta_{i,\theta,h_n}^2 |Z_{1:i-1}]$ and the second sum in \eqref{eq:proof:sumEtai} can be handled in the exact same way as \eqref{eq:sumsquEta1}. We conclude that
\begin{align*}
\Lambda_n(\theta+h_n,\theta) 
=h^T\Delta_{n,\theta} - \frac12 h^T\Sigma_{n,\theta}h+ o_P(1)
\end{align*}
as required for the first part of Definition~\ref{def:LAMN}.

For the weak convergence of the second part, recall that because of $0\preccurlyeq I_\theta(Q_{z_{1:i-1}}\mathcal P) \preccurlyeq I_\theta(\P)$ the entries of $\Sigma_{n,\theta}$ are uniformly bounded by a finite deterministic constant, $C_{\theta}$, say. Moreover, since, as above, $\E_{n,\theta}[t_{i,\theta}|Z_{1:i-1}]=0$ and $|h^Tt_{i,\theta}|\le \|h\|e^{2\alpha}\sqrt{I_\theta(\P)}$ almost surely, Hoeffding's lemma yields
$$
\E_{n,\theta}[\exp(h^Tt_{i,\theta}/\sqrt{n})|Z_{1:i-1}] \le \exp\left(\frac{\|h\|^2e^{4\alpha}I_\theta(\P)}{2n}\right).
$$
Because of $\Delta_{n,\theta} = \sum_{i=1}^n t_{i,\theta}/\sqrt{n}$ we therefore get
$$
\mathcal M_{\Delta}(h):=\E_{n,\theta}[\exp(h^T\Delta_{n,\theta})] = \E_{n,\theta}\left[\prod_{i=1}^n \exp(h^T t_{i,\theta}/\sqrt{n})\right]
\le
\exp\left(\frac{\|h\|^2e^{4\alpha}I_\theta(\P)}{2}\right).
$$
Thus, the joint moment generating function of $(\Delta_{n,\theta},\Sigma_{n,\theta})$ is given by
$$
\mathcal M_n(\hbar) := \E_{n,\theta}\left[ \exp\left(\sum_{l=1}^p\hbar_l[\Delta_{n,\theta}]_l+ \sum_{j=1}^{p} \sum_{k=1}^j \hbar_{p+(j-1)j/2+k}[\Sigma_{n,\theta}]_{j,k}\right)\right],
$$
and is finite for all $\hbar\in\R^{p+p(p+1)/2}$. In particular, this can be written as
$\mathcal M_n(\hbar) = \E_{n,\theta}[u(\Sigma_{n,\theta})\exp(h^T\Delta_{n,\theta})]$, where $u:\R^{p\times p}\to\R$ is a continuous and bounded function and $h\in\R^p$ are the first $p$ entries of $\hbar$. Next, abbreviate $\Gamma_{n,\theta} := \Lambda_n(\theta+h_n,\theta) - h^T\Delta_{n,\theta} + \frac12 h^T\Sigma_{n,\theta}h$, of which we already know that it is $o_P(1)$ under $R_{n,\theta}$. With this notation we have
\begin{align*}
\mathcal M_n(\hbar) &= \E_{n,\theta+h_n}\left[u(\Sigma_{n,\theta})\exp\left(h^T\Delta_{n,\theta}-\Lambda_n(\theta+h_n,\theta)\right) \right]\\
&=
\E_{n,\theta+h_n}\left[u(\Sigma_{n,\theta})\exp\left(\frac12 h^T\Sigma_{n,\theta}h\right) \exp\left(- \Gamma_{n,\theta}\right) \right].
\end{align*}
Notice that $u(\Sigma_{n,\theta})\exp\left(\frac12 h^T\Sigma_{n,\theta}h\right) = v(\Sigma_{n,\theta})$ for a bounded continuous function $v:\R^{p\times p}\to\R$ of $\Sigma_{n,\theta}$ and by our weak convergence assumption on $\Sigma_{n,\theta}$, we have $\E_{n,\theta}\left[v(\Sigma_{n,\theta})- \E[v(\Sigma_\theta)]\right]\to0$. In view of mutual contiguity of $R_{n,\theta}$ and $R_{n,\theta+h_n}$ (cf. Lemma~\ref{lemma:contiguity}) and LeCam's first lemma \citep[cf. part~(iv) of Lemma~6.4 in][]{vanderVaart07} we also have
$\E_{n,\theta+h_n}\left[v(\Sigma_{n,\theta}) - \E[v(\Sigma_\theta)]\right]\to0$. Thus, if we can show that $\exp\left(- \Gamma_{n,\theta}\right)\to1$ in $L_1(R_{\theta+h_n})$, we obtain $\mathcal M_n(\hbar) \to \E[v(\Sigma_\theta)] = \E[u(\Sigma_\theta)\exp\left(\frac12 h^T\Sigma_{\theta}h\right)]$. But the latter expression is equal to
\begin{align*}
\E\left[u(\Sigma_\theta)\exp\left(\frac12 h^T\Sigma_{\theta}h\right)\right] &= \E\left[u(\Sigma_\theta)\E\left[\exp(h^T\Sigma_\theta^{1/2}\Delta)\Big|\Sigma_\theta\right]\right]\\
&=\E\left[u(\Sigma_\theta)\exp(h^T\Sigma_\theta^{1/2}\Delta)\right],
\end{align*}
for a random vector $\Delta\thicksim N(0,I_p)$ independent of $\Sigma_\theta$. In other words, the joint moment generating function of $(\Delta_{n,\theta},\Sigma_{n,\theta})$, which is finite, converges to the joint moment generating function of $(\Sigma_\theta^{1/2}\Delta,\Sigma_\theta)$, which implies the desired weak convergence. Thus, it remains to show $\exp\left(- \Gamma_{n,\theta}\right)\to1$ in $L_1(R_{\theta+h_n})$. By contiguity, $\Gamma_{n,\theta} = o_P(1)$ under $R_{n,\theta+h_n}$. To get the desired $L_1$-convergence, it remains to show uniform integrability of $\exp\left(- \Gamma_{n,\theta}\right)$ with respect to $R_{\theta+h_n}$. Since $\frac12 h^T\Sigma_{n,\theta}h$ is bounded by a finite constant uniformly in $n$, it actually suffices to show uniform integrability of $V_n:=\exp(-\Lambda_n(\theta+h_n,\theta) + h^T\Delta_{n,\theta})$. Sufficient conditions for this to be the case are (i) uniform boundedness of the expected values and (ii) for any sequence of events $B_n$ with probability converging to zero we have $\E_{n,\theta+h_n}[V_n\mathds 1_{B_n}] \to0$. For (i) simply observe
$$
\E_{n,\theta+h_n}[V_n] = \E_{n,\theta+h_n}\left[\exp(h^T\Delta_{n,\theta})\frac{dR_{n,\theta}}{dR_{n,\theta+h_n}}\right]
= \E_{n,\theta}\left[\exp(h^T\Delta_{n,\theta})\right],
$$
which was already shown to be bounded by $\exp(\|h\|^2e^{4\alpha}I_\theta(\P)/2)$. Similarly, for (ii) write
$$
\E_{n,\theta+h_n}[V_n\mathds 1_{B_n}] = \E_{n,\theta}\left[\exp(h^T\Delta_{n,\theta})\mathds 1_{B_n}\right] \le 
\sqrt{\E_{n,\theta}\left[\exp(2h^T\Delta_{n,\theta})\right] R_{n,\theta}(B_n) },
$$
the upper bound now converging to zero in view of contiguity $R_{n,\theta}\vartriangleleft R_{n,\theta+h_n}$.\hfill\qed

\begin{example}\label{ex:noWeakConv}\normalfont
Consider the Bernoulli model of Subsection~\ref{sec:Bernoulli}, i.e., $\P=(p_\theta)_{\theta\in\Theta}$, $p_\theta(x) = \theta^x(1-\theta)^{1-x}$, $x\in\X=\{0,1\}$, $\theta\in\Theta=(0,1)$. For a privacy parameter $\alpha>0$, let $Q_{0,\alpha}$ be the optimal privacy mechanism from that subsection generating sanitized data in $\mathcal Z=\{0,1\}$ and define the sequentially interactive mechanism from $\X^n$ to $\mathcal Z^n$ by $Q^{(n)}(dz|x) := Q_{z_1,n}(dz_n|x_n)Q_{z_1,n}(dz_{n-1}|x_{n-1})\dots Q_{z_1,n}(dz_2|x_2)Q_{\varnothing,n}(dz_1|x_1)$, where $Q_{\varnothing,n}(dz_1|x_1) := \text{Bernoulli}(\frac13)$ if $n$ is even and $Q_{\varnothing,n}(dz_1|x_1) = \text{Bernoulli}(\frac23)$ if $n$ is odd, and $Q_{z_1,n}(dz_i|x_i) := Q_{0,\alpha}(dz_i|x_i)$, if $z_1=1$ and $Q_{z_1,n}(dz_i|x_i) := Q_{0,\alpha/2}(dz_i|x_i)$, if $z_1=0$, $i=2,\dots, n$. Since the marginal channels $Q_{z_1,n}$ and $Q_{\varnothing,n}$ are elements of $\mathcal Q_\alpha$, we see that $Q^{(n)}$ is $\alpha$-differentially private. Thus
\begin{align*}
\Sigma_{n,\theta}(z) &= \frac{1}{n}\left( (n-1)I_\theta(Q_{z_1,n}\P)+ I_\theta(Q_{\varnothing,n}\P)\right) \\
&=
\frac{n-1}{n} [z_1 I_\theta(Q_{0,\alpha}\P) + (1-z_1)I_\theta(Q_{0,\alpha/2}\P)],
\end{align*}
but the distribution of $Z_1$ is either Bernoulli$(\frac13)$ or Bernoulli$(\frac23)$, depending on whether $n$ is even or odd, and hence it does not settle down at any fixed distribution for $n\to\infty$.
\end{example}

%================================================================================================

\section{Proofs of Section~4.2 on estimation of the optimal privacy mechanism}
\label{sec:proofsEstQ}

\subsection{Proof of Lemma~\ref{lemma:ConsQuantExists}}

We begin by fixing an arbitrary sequence $(\gamma_m)_{m\in\N}$ with $(0,1)\ni\gamma_m\to0$. By our measurability assumption, the following functions are measurable: 
$\theta\mapsto F_\theta(t) := P_\theta(\{x:\|x\|_2 \le t\}) = \int_\X \mathds 1_{\{x:\|x\|_2\le t\}}(y) p_\theta(y)\mu(dy)$, for all $t\in\R$, and $\theta\mapsto P_\theta(p_\theta\ge n) = \int_\X \mathds 1_{\{p_\theta\ge n\}} p_\theta d\mu $, for all $n\in\N$. Therefore, also $\theta\mapsto F_\theta^\dagger(1-\gamma_m):=\inf\{t\in\R:F_\theta(t)\ge 1-\gamma_m\}$ is measurable, for all $m\in\N$, because $\{\theta\in\Theta: F_\theta^\dagger(1-\gamma_m) \le t\} = \{\theta\in\Theta: 1-\gamma_m\le F_\theta(t)\}\subseteq\R^p$ is a Borel set, for all $t\in\R$. Moreover, $\theta\mapsto N_{m,1}(\theta) := \min\{n\in\N: P_\theta(p_\theta\ge n)\le \gamma_m\}$ is also measurable, because $\{\theta\in\Theta:N_m(\theta)\le t\}=\bigcup_{n\in\N}\{\theta\in\Theta: P_\theta(p_\theta> n)\le \gamma_m, n\le t\}$ is a countable union of Borel sets, for every $t\in\R$. By an analogous argument, also $\theta\mapsto N_{m,2}(\theta) := \min\{n\in\N: P_\theta(p_\theta< \frac1n)\le \gamma_m\}$ and $\theta\mapsto N_{m,3}(\theta) := \min\{n\in\N: P_\theta(|\dot{p}_\theta|\ge n)\le \gamma_m\}$ are measurable.

Now, define $L_m(\theta) := \{x\in\X: \|x\|_2 \le F_\theta^\dagger(1-\gamma_m)\}$ and notice that $(x,\theta)\mapsto\mathds 1_{L_m(\theta)}(x)$ is also measurable, and so is $\theta\mapsto \mu(L_m(\theta)) = \int_\X \mathds 1_{L_m(\theta)}(x) \mu(dx)$, which is also finite in view of compactness of $L_m(\theta)$. To close the preliminary considerations, set 
$$
M:=M_m(\theta):= \max\left\{\left\lceil \frac{\mu(L_m(\theta))}{\gamma_m}\right\rceil, N_{m,1}(\theta), N_{m,2}(\theta), N_{m,3}(\theta)\right\},
$$ 
which is also measurable in $\theta$.

Next, define $C_{i,m}(\theta) := \{x\in\X: \frac{i-1}{M}\le p_\theta(x) < \frac{i}{M}\}\cap L_m(\theta)$, for $i=2,\dots, M^2$, $D_{l,m}(\theta) := \{x\in\X: \frac{l-1}{M}\le \dot{p}_\theta(x)<\frac{l}{M}\}$, for $l=-M^2+1,\dots, M^2$, and $C_{i,l,m}(\theta) := C_{i,m}(\theta) \cap D_{l,m}(\theta)$, and let $B_j:=B_{j,m}(\theta)$, $j=1, 2,\dots, (M^2-1)(2M^2)=:k_m(\theta)-1$, be an enumeration of all the $C_{i,l,m}(\theta)$, $i=2,\dots, M^2$, $l=-M^2+1,\dots, M^2$. Notice that the $B_{j,m}(\theta)$ are disjoint and have finite $\mu$-measure, because $\mu(B_{j,m}(\theta)) \le \mu(p_\theta\ge\frac1M)\le M$, by Markov's inequality. In particular, with $B_{k_m(\theta)} :=\left(\bigcup_{j=1}^{k_m(\theta)-1} B_j \right)^c$, we obtain a partition of $\X$ of $k_m(\theta)$ sets such that $(x,\theta)\mapsto \mathds 1_{B_{j,m}(\theta)}(x)$ is measurable for all $j$. Now set $J_m(\theta) := \{j\in[k_m(\theta)-1]: \mu(B_{j,m}(\theta))>0\}$ and observe
\begin{align}
\bigcup_{j=1}^{k_m(\theta)-1} B_{j,m}(\theta) &= \left[\bigcup_{i=2}^{M^2} C_{i,m}(\theta)\right] \cap \left[\bigcup_{l=-M^2+1}^{M^2} D_{l,m}(\theta)\right]\notag\\
&= \left\{\frac{1}{M}\le p_\theta < M\right\}\cap \left\{ -M\le \dot{p}_\theta <M \right\} \cap L_m(\theta).\label{eq:KJbound}
\end{align}
Thus, with $K(\theta) := K_{J,m}(\theta):=\bigcup_{j\in J} B_{j,m}(\theta)$ and omitting only $\mu$-null-sets, we get
\begin{align*}
P_\theta(K_{J,m}(\theta)^c) &= P_\theta\left( \left[\bigcup_{j=1}^{k_m(\theta)-1} B_{j,m}(\theta)\right]^c\right)\\
&\le P_\theta\left(p_\theta<\frac{1}{M}\right) + P_\theta\left( p_\theta\ge M\right)+P_\theta\left( |\dot{p}_\theta|\ge M\right)+ P_\theta(L_m(\theta)^c)\\
&\le P_\theta\left(p_\theta<\frac{1}{N_{m,2}(\theta)}\right) +P_\theta\left( p_\theta \ge N_{m,1}(\theta)\right)+P_\theta\left( |\dot{p}_\theta|\ge N_{m,3}(\theta)\right)+ P_\theta(L_m(\theta)^c)\\
&\le
3\gamma_m  + P_\theta(\{x\in\X:F_\theta^\dagger(1-\gamma_m)< \|x\|_2\}) \le 4\gamma_m.
\end{align*}
Next, we compute the histogram approximation errors, noting that for $x,y\in B_j$, $j\in J$, we have $|p_\theta(x) - p_\theta(y)|\le \frac{1}{M}$. Therefore,
\begin{align*}
\Big\|(p_\theta - \bar{p}_\theta)\mathds 1_{K(\theta)}\Big\|_{L_1(\mu)} &\le 
\sum_{j\in J}\frac{\int_{B_{j}^2}  |p_\theta(x) - p_\theta(y)| \mu(dy) \mu(dx)}{\mu(B_{j})}
\le \sum_{j\in J}\frac{\mu(B_{j})}{M}= \frac{\mu(K(\theta))}{M}.
\end{align*}
But in view of \eqref{eq:KJbound} and the definition of $M$ we have 
$$
\frac{\mu(K(\theta))}{M}\le  \frac{\mu(L_m(\theta))}{M}\le \gamma_m.
$$
The same argument applies for the $L_1(\mu)$-norm involving $\dot{p}_\theta$. Thus, we see that even $\sup_{\theta\in\Theta}\Delta_m(\theta)\to0$ as $m\to\infty$.\hfill\qed

%============================================================

\subsection{Proof of Lemma~\ref{lemma:finitedimlOpti}}

Parts~\ref{lemma:finitedimlOpti:i} and \ref{lemma:finitedimlOpti:ii} do not require a proof.

For Part~\ref{lemma:finitedimlOpti:iii}, notice that $Q_{i\cdot}\in\mathcal C_\alpha(k)$ and, if $Q_{ij}=0$ for some $i,j\in[k]$ then also $Q_{ij'}=0$ for all $j'\in[k]$. For ease of notation we also write $q(i|j) := Q_{ij}$. By Lemma~\ref{lemma:pre-postProcessing} $QT_k \in \mathcal Q_\alpha(\X\to[\ell])$ and since for every $x\in\X$ and $C\subseteq[\ell]$, $\sum_{i\in C}q(i|T_k(x)) = Q(C|T_k(x))$, we see that $i\mapsto q(i|T_k(x))$ is a counting density of $[QT_k](\cdot|x)$ and that $i\mapsto q_\theta(i) := \int_\X q(i|T_k(x))p_\theta(x)\mu(dx)$ is a counting density of $QT_kP_\theta$. In particular, the three statements (a) $q_\theta(i)>0$, (b) $Q_{ij}>0$ for some $j\in[k]$ and (c) $Q_{ij}>0$ for all $j\in[k]$, are equivalent. Now by Lemma~\ref{lemma:DQM} we can express the Fisher-Information of the model $QT_k\P = [QT_k]\P$ with $Q\in\mathcal M_\alpha(\ell,k)$  as
\begin{align*}
I_\theta&(QT_k\P) = \\
&= \sum_{\substack{i\in[\ell]\\ Q_{i1}>0}} \left(\int_\X s_\theta(x)\frac{q(i|T_k(x)) p_\theta(x)}{q_\theta(i)}\mu(dx)\right)\left(\int_\X s_\theta(x)\frac{q(i|T_k(x)) p_\theta(x)}{q_\theta(i)}\mu(dx)\right)^T q_\theta(i) \\
&= \sum_{\substack{i\in[\ell]\\\|Q_{i\cdot}\|_2>0}} \frac{\left(\sum_{j\in[k]}Q_{ij}(\dot{p}_\theta)_{\cdot j}\right)\left(\sum_{j\in[k]}Q_{ij}(\dot{p}_\theta)_{\cdot j}\right)^T}{\sum_{j\in[k]} \int_{T_k^{-1}(\{j\})}q(i|T_k(x))p_\theta(x)\mu(dx)}\\
&= 
\sum_{\substack{i\in[\ell]\\\|Q_{i\cdot}\|_2>0}} \frac{\left(\dot{p}_\theta Q_{i\cdot} \right)\left( \dot{p}_\theta Q_{i\cdot}\right)^T}{Q_{i\cdot}p_\theta}=\sum_{i\in[\ell]} g_\theta(Q_{i\cdot}).
\end{align*}
The only point where continuity of $g_\theta$ might fail is $v=0$, because except for division by $v^Tp_\theta$, we are only dealing with compositions of continuous functions. For $v\in\mathcal C_\alpha(k)$ we have $v^Tp_\theta\ge \min_j v_j \ge e^{-\alpha}\|v\|_\infty\ge e^{-\alpha}\|v\|_2/\sqrt{k}$ and thus, $\|g_\theta(v)\|_2\le \sqrt{k}\|v\|_2\|\dot{p}_{\theta}\|_2^2e^\alpha\to0$ as $v\to0$. 

For Part~\ref{lemma:finitedimlOpti:iv}, pick $\lambda\in(0,1)$, $v_1,v_2\in\mathcal C_\alpha(k)$ and consider 
$$
g_\theta(\lambda v_1 + (1-\lambda)v_2) = \frac{\left( \lambda v_1^T\dot{p}_\theta + (1-\lambda) v_2^T\dot{p}_\theta\right)^2}{\lambda v_1^T{p}_\theta + (1-\lambda) v_2^T{p}_\theta}.
$$
If either one of $v_1$ or $v_2$ is equal to the null vector, we even have equality $g_\theta(\lambda v_1 + (1-\lambda)v_2) = \lambda g_\theta(v_1) + (1-\lambda) g_\theta(v_2)$. Otherwise, $v_1^Tp_\theta$ and $v_2^Tp_\theta$ are strictly positive and Lemma~\ref{lemma:convexity} yields the desired inequality $g_\theta(\lambda v_1 + (1-\lambda)v_2) \le \lambda g_\theta(v_1) + (1-\lambda) g_\theta(v_2)$. For sub-linearity, simply note that $g_\theta(cv) = \frac{c^2(v^T\dot{p}_\theta)^2}{c(v^Tp_\theta)} = c g_\theta(v)$, for every $c>0$, $v\in\mathcal C_\alpha(k)$, and use convexity to get $g_\theta(v_1+v_2) = g_\theta(2[\frac12 v_1 + \frac12 v_2]) = 2 g_\theta(\frac12 v_1 + \frac12 v_2) \le 2 [\frac12 g_\theta(v_1) + \frac12  g_\theta(v_2)] = g_\theta(v_1) + g_\theta(v_2)$. \hfill\qed

%======================================================================

\subsection{Proof of Lemma~\ref{lemma:measurability}}

We proceed by patching $Q$ together from its restrictions on the sets $B_j := k^{-1}(\{j\})$, $j\in\N$. We heavily rely on Lemma~\ref{lemma:finitedimlOpti}. Fix $j\in\N$ and define $\Psi_j(\theta,Q) := I_\theta(QT_\theta\P)\in\R_+$ on $B_j\times \mathcal M_\alpha(j,j)$. This is well defined in view of the discussion preceding Lemma~\ref{lemma:finitedimlOpti}. The same lemma shows that $\Psi_j$ is continuous in its second argument. Since $\mathcal M_\alpha(j,j)\subseteq\R^{j\times j}$ is non-empty and compact, the maximum is achieved. Clearly, $\theta\mapsto P_\theta(T_\theta^{-1}(\{l\}) = \int_\X \mathds 1_{T^{-1}(\{j\})}(x,\theta)p_\theta(x)\mu(dx)$ and $\theta\mapsto \int_{T_\theta^{-1}(\{l\})} s_\theta(x)p_\theta(x)\mu(dx) = \int_\X \mathds 1_{T^{-1}(\{l\})}(x,\theta)s_\theta(x)p_\theta(x)\mu(dx)$ are measurable, by Condition~\ref{C.measurable} and Fubini's theorem, for every $l\in[j]$. Thus, using Lemma~\ref{lemma:finitedimlOpti}.(\ref{lemma:finitedimlOpti:iii}) and referring to the definitions of the $j$-vector $p_\theta$ and the $j\times j$-matrix $\dot{p}_\theta$ of that lemma, we see that also $\theta\mapsto \Psi_j(\theta,Q)$ is measurable. In other words $\Psi_j:B_j\times \mathcal M_\alpha(j,j)\to\R$ is a Carath\'{e}odory function. Thus, by Theorem~18.19 of \citet{Aliprantis06} there exists a measurable function $M_j:B_j\to \mathcal M_\alpha(j,j)$ such that 
$$
I_\theta(M_j(\theta) T_{\theta}\P) =  \Psi_j(\theta,M_j(\theta))  = \max_{Q\in\mathcal M_\alpha(j,j)} \Psi_j(\theta,Q) = \max_{Q\in\mathcal M_\alpha(j,j)}I_\theta(QT_{\theta}\P), \quad \forall \theta\in B_j.
$$
Now, for $\theta\in B_j$, $C\subseteq[j]$ and $y\in[j]$, define $Q_\theta(C|y) := \sum_{i\in C} [M_j(\theta)]_{i,y}$ to be the corresponding Markov kernel in $\mathcal Q_\alpha([j]\to[j])$, which satisfies \eqref{eq:lemma:measurability} for all $\theta\in B_j$ and $(y,\theta)\mapsto Q_\theta(C|y)$ is measurable. Now, for $C\subseteq\N$, $y\in\N$ and $\theta\in\Theta$, define $Q(C|y,\theta) := \sum_{j\in\N} Q_\theta(C\cap[j]|y\land j)\mathds 1_{B_j}(\theta)$, which is the desired Markov kernel. By a similar argument as above, involving Lemma~\ref{lemma:finitedimlOpti} with $p_\theta := (P_{\theta_0} T_\theta^{-1}(\{j\}))_{j\in[k(\theta)]}$ and $\dot{p}_\theta := (\int_{T_\theta^{-1}(\{j\})} s_{\theta_0}(x)p_{\theta_0}(x)\mu(dx))_{j\in[k(\theta)]}$, we obtain also measurability of $\theta\to I_{\theta_0}(Q_\theta T_\theta \P)$.\hfill\qed

%======================================================================

\subsection{Proof of Lemma~\ref{lemma:Delta_k}}
For $Q\in\mathcal Q_\alpha(\X\to\mathcal Z)$, let $q:\mathcal Z\times \X \to [e^{-\alpha},e^\alpha]$ be as in Lemma~\ref{lemma:Qdensities}. In particular, $q(z|x)p_\theta(x)$ is a $\nu\otimes\mu$-density of $Q(dz|x)P_\theta(dx)$. Hence $q_\theta(z) := \int_\X q(z|x)p_\theta(x)\mu(dx)$ is a $\nu$-density of $QP_\theta$ and the conditional expectation of $s_\theta$,
$$
t_\theta(z) := \int_\X s_\theta(x)\frac{q(z|x)p_\theta(x)}{q_\theta(z)} \mu(dx) \in\R^p,
$$
is a score of the model $Q\P$ at $\theta$ (cf. Lemma~\ref{lemma:DQM}). Furthermore, for $j\in J$, notice that $p_j(x) := [\mu(B_j)]^{-1}\mathds 1_{B_j}(x)$ is a $\mu$-density of $P_j$ and that
\begin{align*}
q_k(z|j):= \begin{cases}
\int_\X q(z|x)p_j(x)\mu(dx), &j\in J,\\
1, &j\notin J,
\end{cases}
\end{align*}
is a $\nu$-density of $Q_k(\cdot|j)$. In particular, $e^{-\alpha}\le q_\theta(z),q_k(z|j)\le e^\alpha$ and $\|t_\theta(z)\|\le e^{2\alpha}\E_\theta[\|s_\theta\|]\le e^{2\alpha}\sqrt{\trace I_\theta(\P)}$. Finally, define $r_\theta(z) := \int_\X q_k(z|T_k(x))p_\theta(x)\mu(dx)\le e^\alpha$ and 
$$
u_\theta(z) := \int_{\X} s_\theta(x) \frac{q_k(z|T_k(x))p_\theta(x)}{r_\theta(z)}\mu(dx) \in\R^p.
$$
Since $\int_C\int_F q_k(z|T_k(x))p_\theta(x) \nu\otimes\mu(dz,dx) = \int_F Q_k(C|T_k(x))P_\theta(dx)$, we see that $q_k(z|T_k(x))p_\theta(x)$ is a $\nu\otimes\mu$-density of $Q_k(dz|T_k(x))P_\theta(dx)$ and thus, $r_\theta$ is a $\nu$-density of $Q_kT_kP_\theta$ and $u_\theta$ is a conditional expectation equal to the score of the model $Q_kT_k\P$ at $\theta$ (cf. Lemma~\ref{lemma:DQM}) satisfying $\|u_\theta(z)\|\le e^{2\alpha} \sqrt{\trace I_\theta(\P)}$. 

Therefore, we can compute and bound the difference of Fisher-Informations as
\begin{align*}
&\| I_\theta(Q\P) - I_\theta(Q_kT_k\P)\|_2 
=
\left\| \int_{\mathcal Z} \left[t_\theta(z)t_\theta(z)^Tq_\theta(z) - u_\theta(z)u_\theta(z)^Tr_\theta(z)\right]\nu(dz)\right\|_2\\
&\quad\le
 \int_{\mathcal Z} \|t_\theta(z)t_\theta(z)^T - u_\theta(z)u_\theta(z)^T\|_2 r_\theta(z)\nu(dz) + \int_{\mathcal Z} \|t_\theta(z)\|_2^2|q_\theta(z)-r_\theta(z)|\nu(dz).
\end{align*}
Since $B_j = T_k^{-1}(\{j\})$ and
\begin{align*}
r_\theta(z) &= \sum_{j=1}^k \int_{B_j} q_k(z|T_k(x))p_\theta(x)\mu(dx) = \sum_{j=1}^k \int_{B_j} q_k(z|j)P_\theta(dx)\\
& =
\sum_{j\in J} \int_\X q(z|x)p_j(x)\mu(dx) P_\theta(B_j) 	+ \sum_{j\notin J} P_\theta(B_j)\\
&=
\int_{K_J} q(z|x) \bar{p}_\theta(x)\mu(dx) + P_\theta(K_J^c),
\end{align*}
we have 
\begin{align*}
|q_\theta(z) - r_\theta(z)| 
&= 
\left|\int_{K_J} q(z|x) [p_\theta(x) - \bar{p}_\theta(x)] \mu(dx) + \int_{K_J^c} [q(z|x)-1]p_\theta(x)\mu(dx) \right|\\
&\le
e^\alpha \|(p_\theta - \bar{p}_\theta)\mathds 1_{K_J}\|_{L_1(\mu)} + (e^\alpha-1)P_\theta(K_J^c).
\end{align*}
Moreover, by analogous calculations we get
\begin{align*}
u_\theta(z)r_\theta(z) &= \int_\X q_k(z|T_k(x))\dot{p}_\theta(x)\mu(dx)\\
&=
\int_{K_J} q(z|x) \bar{\dot{p}}_\theta(x)\mu(dx) + \int_{K_J^c}\dot{p}_\theta(x)\mu(dx),
\end{align*}
and $t_\theta(z)r_\theta(z) = \int_\X q(z|x)\dot{p}_\theta(x)\mu(dx)\left(\frac{r_\theta(z)-q_\theta(z)}{q_\theta(z)}+1\right)$. Also, applying the Cauchy-Schwarz inequality, we have 
$$
\int_{F}\|\dot{p}_\theta(x)\|_2\mu(dx) = \int_{F}\|s_\theta(x)\sqrt{p_\theta(x)}\|\sqrt{p_\theta(x)}\mu(dx) \le \sqrt{\E_\theta[\|s_\theta\|_2^2]P_\theta(F)},
$$ 
which yields
\begin{align*}
&\|t_\theta(z)t_\theta(z)^T - u_\theta(z)u_\theta(z)^T\|_2 r_\theta(z) \\
&\quad= \|t_\theta(z)[t_\theta(z) - u_\theta(z)]^Tr_\theta(z) + [t_\theta(z)-u_\theta(z)]u_\theta(z)^Tr_\theta(z)\|_2\\
&\quad\le
2e^{2\alpha}\sqrt{\trace I_\theta(\P)} \|[t_\theta(z) - u_\theta(z)]r_\theta(z)\|_2\\
&\quad=
2e^{2\alpha}\sqrt{\trace I_\theta(\P)} 
\left\|
\int_\X  \dot{p}_\theta(x)
\left(q(z|x)\frac{r_\theta(z)-q_\theta(z)}{q_\theta(z)} + q(z|x)- q_k(z|T_k(x)) \right)\mu(dx) 
\right\|_2
\\
&\quad\le
2e^{2\alpha}\sqrt{\trace I_\theta(\P)} \left(e^\alpha\Big\|\|\dot{p}_\theta-\bar{\dot{p}}_\theta\|_2\mathds 1_{K_J}\Big\|_{L_1(\mu)} + (e^\alpha-1)\sqrt{\trace I_\theta(\P)P_\theta(K_J^c)}+\right.\\
&\quad\quad\left. + e^{2\alpha}\sqrt{\trace I_\theta(\P)}|q_\theta(z)-r_\theta(z)| \right).
\end{align*}
The claimed bound follows upon putting the pieces together, recalling that $\nu$ is a probability measure and noting that $1$, $\trace I_\theta(\P)$ and $\sqrt{\trace I_\theta(\P)}$ are all bounded by $1\lor \trace I_\theta(\P)$.\hfill\qed

%==============================================================================

\subsection{Proof of Lemma~\ref{lemma:finiteReduction}}
Theorem~2 of \citet{Kairouz16} shows that
$$
\sup_{Q\in \bigcup_{\ell\in\N}\mathcal Q_\alpha([k]\to[\ell])} I_\theta(QT_k\P) = \sup_{Q\in\mathcal Q_\alpha([k]\to[k])} I_\theta(QT_k\P),
$$
provided we can show that on the set $\bigcup_{\ell\in\N}\mathcal Q_\alpha([k]\to[\ell])$ the objective function $Q\mapsto  I_\theta(QT_k\P)$ has the sub-linear structure given in their Definition~1 and equation~(8). But this is verified by Lemma~\ref{lemma:finitedimlOpti}.

We are thus left with showing
\begin{equation}\label{eq:finiteZ}
\sup_{Q\in \mathcal Q_\alpha([k])} I_\theta(QT_k\P) = \sup_{Q\in \bigcup_{\ell\in\N}\mathcal Q_\alpha([k]\to[\ell])} I_\theta(QT_k\P).
\end{equation}
Only the inequality $\le$ requires a proof. Therefore, it suffices to show that for every $Q\in\mathcal Q_\alpha([k])$, there exists a sequence $Q_m\in\bigcup_{\ell\in\N}\mathcal Q_\alpha([k]\to[\ell])$ such that $I_\theta(Q_mT_k\P) \to I_\theta(QT_k\P)$, as $m\to\infty$. 
%For simplicity, we suppress the index $m$ whenever there is no risk of confusion.

Fix a $Q\in\mathcal Q_\alpha([k])$, that is, $Q\in\mathcal Q_\alpha([k]\to\mathcal Z)$, for some measurable space $(\mathcal Z,\mathcal G)$, and let $\nu(dz) := Q(dz|k)$. By differential privacy, for $C\in\mathcal G$, we have $Q(C|i)\le e^\alpha\nu(C)$, for each $i\in[k]$. In particular, $Q(\cdot|i)\ll\nu$ for all $i\in[k]$ and we write $q(z|i)$ for a corresponding $\nu$-density. Since 
$$
\int_C e^{-\alpha} \nu(dz) = e^{-\alpha} \nu(C) \le Q(C|i)=\int_C q(z|i) \nu(dz) \le e^\alpha \nu(C) = \int_C e^\alpha \nu(dz),
$$
we have $e^{-\alpha}\le q(z|i)\le e^\alpha$, $\nu$-almost surely and we can thus change $q(z|i)$ on a $\nu$-null set such that these bounds hold point-wise and $z\mapsto q(z|i)$ is still a density of $Q(\cdot|i)$ for every $i\in[k]$. Now, write $q_\theta(z) = \sum_{i=1}^k q(z|i)[P_\theta T_k^{-1}](\{i\}) = \int_\X q(z|T_k(x)) p_\theta(x)\mu(dx)$ for the marginal $\nu$-density of $Q[P_{\theta}T_k^{-1}] = [QT_k]P_\theta$ and note that $q(z|T_k(x)) p_\theta(x)$ is a $\nu\otimes\mu$-density of $Q(dz|T_k(x))P_\theta(dx)=[QT_k](dz|x)P_\theta(dx)$, because
$$
\int_{C\times F} q(z|T_k(x)) p_\theta(x) \nu\otimes\mu(dz,dx) = \int_F Q(C|T_k(x))P_\theta(dx)=
\int_F [QT_k](C|x)P_\theta(dx).
$$
Now, using Lemma~\ref{lemma:DQM}, write
$$
t_{\theta}(z) = \int_{\X} s_{\theta}(x)\frac{q(z|T_k(x))p_{\theta}(x)}{q_{\theta}(z)}\mu(dx),
$$
for the score in the model $QT_k\P$ at $\theta$. Next, we construct the sequence $Q_m$.

For $m\in\N$, $i\in[k]$ and $j\in[\lceil 2^{m}(e^\alpha-e^{-\alpha})\rceil]$, define 
$$
B_{i,j,m} := \left\{z\in\Z: e^{-\alpha} + 2^{-m}(j-1)\le q(z|i) < e^{-\alpha} + 2^{-m}j \right\},
$$
where $\lceil y\rceil$ denotes the smallest integer strictly larger than $y$. Notice that for fixed $i$ and $m$, $(B_{i,j,m})_{j}$ is a partition of $\Z$. Now, for $j_1,\dots, j_{k}\in[\lceil 2^{m}(e^\alpha-e^{-\alpha})\rceil]$ define $C_{j_1,\dots, j_{k}} := \bigcap_{i=1}^{k} B_{i,j_i,m}$. These are $\ell_m := \lceil 2^{m}(e^\alpha-e^{-\alpha})\rceil^{k}$ sets which, by construction, still partition all of $\Z$. We enumerate them, writing $C_j$, $j\in[\ell_m]$. Finally, note that if $z\in C_j$, then there exists $j_1,\dots, j_{k}$ such that for all $i\in[k]$, we have 
$$
e^{-\alpha} + 2^{-m}(j_i-1)\le q(z|i) < e^{-\alpha} + 2^{-m}j_i.
$$
Therefore, we conclude that
\begin{align}
&\sum_{\substack{j=1\\\nu(C_j)>0}}^{\ell_m} \int_{C_j}\max_{i\in[k]} \left|q(z|i)-\frac{Q(C_j|i)}{\nu(C_j)}\right|\nu(dz) \notag\\
&=
\sum_{\substack{j=1\\\nu(C_j)>0}}^{\ell_m} \int_{C_j}\max_{i\in[k]} \left|q(z|i)-\frac{\int_{C_j} q(z|i) \nu(dz)}{\nu(C_j)}\right|\nu(dz) \notag\\
&\le
\sum_{\substack{j=1\\\nu(C_j)>0}}^{\ell_m} \int_{C_j}\max_{i\in[k]} \left(2^{-m}j_i- 2^{-m}(j_i-1)\right)\nu(dz) = 2^{-m}. \label{eq:2-m}
\end{align}
Now let $\mathcal Z_m \subseteq [\ell_m]$ be the subset of $[\ell_m]$ such that $\nu_m(\{j\}) := \nu(C_j) > 0$, for all $j\in\mathcal Z_m$, and define the channel $Q_m$ on $({\mathcal Z_m}, 2^{{\mathcal Z_m}})$ by its ${\nu_m}$-densities ${q_m}(j|i) := Q(C_j|i)/{\nu_m}(\{j\})$, for $j\in{\mathcal Z_m}$. Clearly, $e^{-\alpha}\le {q_m}(j|i) \le e^\alpha$ and the channel ${Q}_m$ belongs to $\bigcup_{\ell\in\N}\mathcal Q_\alpha([k]\to[\ell])$, because, for any $x,x'\in\X_k=[k]$,
$$
{q_m}(j|x) =  Q(C_j|x)/{\nu_m}(\{j\}) \le e^\alpha\cdot Q(C_j|x')/{\nu_m}(\{j\}) = e^\alpha\cdot {q_m}(j|x').
$$
Now, as above, write ${t}_{m,\theta}(j) := \int_{\X} s_{\theta}(x) \frac{{q_m}(j|T_k(x))p_{\theta}(x)}{{q}_{m,\theta}(j)}\mu(dx)$, where $q_{m,\theta}(j) = \int_{\X}q_m(j|T_k(x))p_{\theta}(x)\mu(dx) \ge e^{-\alpha}$, and note that ${t}_{m,\theta}$ is the score function of the DQM model $Q_mT_k\P$, in view of Lemma~\ref{lemma:DQM}. Therefore,
\begin{align*}
| I_\theta(QT_k\P) - I_\theta({Q}_mT_k\P) | 
&=
\left|
\int_{\Z} t^2_\theta(z)q_\theta(z)\nu(dz) 
- 
\int_{{\Z_m}} {t}^2_{m,\theta}(z) {q}_{m,\theta}(z){\nu_m}(dz)
\right| \\
&=\left|
\sum_{j=1}^{\ell_m} \int_{C_j} t^2_\theta(z)q_\theta(z)\nu(dz)  
- 
\sum_{z\in{\Z_m}} {t}^2_{m,\theta}(z) {q}_{m,\theta}(z){\nu_m}(\{z\})
\right|\\
&=\left|
\sum_{j\in\mathcal Z_m}\int_{C_j} t^2_\theta(z)[QP_{k,\theta}](dz)  
- 
\sum_{j\in{\Z_m}} {t}^2_{m,\theta}(j) [QP_{k,\theta}](C_j)
\right|\\
&=\left|
\sum_{j\in{\Z_m}} \int_{C_j} \left[t^2_\theta(z) - {t}_{m,\theta}^2(j)\right]q_\theta(z)\nu(dz)  
\right|\\
&\le
\sum_{j\in{\Z_m}} \int_{C_j} \left|t_\theta(z) - {t}_{m,\theta}(j)\right| 
\left| t_\theta(z) + {t}_{m,\theta}(j) \right| q_\theta(z)\nu(dz)  \\
&\le
2e^{3\alpha} \E_{\theta}[|s_{\theta}|]\sum_{j\in{\Z_m}} \int_{C_j} \left|t_\theta(z) - {t}_{m,\theta}(j)\right| \nu(dz). 
\end{align*}
Now, for $j\in{\Z_m}$ and $z\in C_j$,
\begin{align*}
\left|t_\theta(z) - {t}_{m,\theta}(j)\right|
&\le
\int_{\X} |s_{\theta}(x)| \left| \frac{q(z|T_k(x))}{q_\theta(z)} - \frac{{q_m}(j|T_k(x))}{{q}_{m,\theta}(j)}\right|p_{\theta}(x)\mu(dx) \\
&\le
\E_{\theta}[|s_{\theta}|] \sup_{x\in\X}\left| \frac{q(z|T_k(x))}{q_\theta(z)} - \frac{{q_m}(j|T_k(x))}{q_\theta(z)} + \frac{{q_m}(j|T_k(x))}{q_\theta(z)} - \frac{{q_m}(j|T_k(x))}{{q}_{m,\theta}(j)}\right|\\
&\le 
\E_{\theta}[|s_{\theta}|]\max_{i\in[k]}
\left(e^\alpha |q(z|i)-{q_m}(j|i)| + e^{3\alpha} \left|q_\theta(z) - {q}_{m,\theta}(j)\right|\right)\\
&\le
2e^{3\alpha} \E_{\theta}[|s_{\theta}|] \max_{i\in[k]}|q(z|i)-{q_m}(j|i)|.
\end{align*}
Hence,
\begin{align*}
&| I_\theta(QT_k\P) - I_\theta({Q}_mT_k\P) | \\
&\le
4e^{6\alpha} I_\theta(\P) \sum_{j\in{\Z_m}} \int_{C_j} \max_{i\in[i]}|q(z|i)-{q_m}(j|i)|\nu(dz) \xrightarrow[m\to\infty]{} 0,
\end{align*}
in view of \eqref{eq:2-m}, which establishes \eqref{eq:finiteZ}.\hfill\qed

%==============================================================================

\subsection{Proof of Lemma~\ref{lemma:ContInTheta}}
Fix $Q\in\mathcal Q_\alpha(\X)$ and use Lemmas~\ref{lemma:DQM} and \ref{lemma:Qdensities} (yielding $\nu$-densities $e^{-\alpha}\le q(z|x)\le e^\alpha$), to write
\begin{align*}
t_\theta(z) := \E[s_\theta(X)| Z=z] &= \int_\X s_\theta(x) \frac{q(z|x) p_\theta(x)}{{q}_\theta(z)} \mu(dx)\\
&= \frac{1}{{q}_\theta(z)} \int_\X q(z|x) \dot{p}_\theta(x)\mu(dx)
\end{align*}
for the score of model $Q\P$ at $\theta$, where ${q}_\theta(z) := \int_\X q(z|x) p_\theta(x)\mu(dx) \ge e^{-\alpha}$.
Now
\begin{align*}
&\left\|I_\theta(Q \P) - I_{\theta'}(Q \P)\right\|_2
=
\left\| \int_{\mathcal Z} \left[t_\theta(z)t_\theta(z)^T q_\theta(z)
-
t_{\theta'}(z)t_{\theta'}(z)^T q_{\theta'}(z)\right] \nu(dz) \right\|_2 \\
&\quad\le
\int_{\mathcal Z} \|t_{\theta}(z)t_{\theta}(z)^T - t_{\theta'}(z)t_{\theta'}(z)\|_2{q}_\theta(z)\nu(dz)
+
\int_{\mathcal Z} \|t_{\theta'}(z)\|_2^2| {q}_\theta(z) - q_{\theta'}(z)|\nu(dz),
\end{align*}
and
$
\left|q_\theta(z) - {q}_{\theta'}(z)\right| \le e^\alpha \|{p}_\theta - {p}_{\theta'}\|_{L_1(\mu)}.
$
Hence, $\int_{\mathcal Z} \|t_{\theta'}(z)\|_2^2 | {q}_\theta(z) - q_{\theta'}(z)|\nu(dz) \le e^{2\alpha} \|{p}_\theta - {p}_{\theta'}\|_{L_1(\mu)} \trace[I_{\theta'}(Q\P)]\le e^{2\alpha} \|{p}_\theta - {p}_{\theta'}\|_{L_1(\mu)} \trace[I_{\theta'}(\P)]$. Moreover, 
\begin{align*}
&\|t_\theta(z)t_\theta(z)^T - t_{\theta'}(z)t_{\theta'}(z)^T\|_2{q}_\theta(z) = \\
&=\|(t_\theta(z) - t_{\theta'}(z))t_\theta(z)^T{q}_\theta(z) + t_{\theta'}(z)(t_\theta(z)- t_{\theta'}(z))^Tq_\theta(z)\|_2\\
&\le (\|t_\theta(z)\|_2 + \|t_{\theta'}(z)\|_2)\|t_\theta(z)q_\theta - t_{\theta'}(z)q_{\theta'}(z) + t_{\theta'}(z)q_{\theta'}(z) - t_{\theta'}(z)q_\theta\|_2
\\
&\le
(\|t_\theta(z)\|_2 + \|t_{\theta'}(z)\|_2)
\left[
e^\alpha \Big\|\|\dot{p}_\theta-\dot{p}_{\theta'}\|_2\Big\|_{L_1(\mu)} +  \|t_{\theta'}(z)\|_2e^\alpha\|{p}_\theta - {p}_{\theta'}\|_{L_1(\mu)}
\right].
\end{align*}
The integral with respect to $\nu$ of this upper bound can be further bounded by
$$
2e^{2\alpha}(\trace[I_{\theta}(\P)] \lor \trace[I_{\theta'}(\P)] \lor1) \left[ \Big\| \|\dot{p}_\theta-\dot{p}_{\theta'}\|_2\Big\|_{L_1(\mu)} + \|{p}_\theta - {p}_{\theta'}\|_{L_1(\mu)}
\right].
$$
The continuity of $\varphi$ is proved in Lemma~\ref{lemma:continuity}. \hfill\qed

\begin{lemma}\label{lemma:continuity}
If conditions \ref{C.DQM} and \ref{C.L2cont} are satisfied, the following functions are continuous:
\begin{enumerate}[i)]
\item $\theta\mapsto p_\theta : \Theta\to L_1(\mu)$ 
\item $\theta\mapsto \sqrt{p_\theta} : \Theta\to L_2(\mu)$ 
\item\label{lemma:continuity:trace} $\theta\mapsto \trace I_\theta(\P) : \Theta\to\R$
\item $\theta\mapsto \dot{p}_\theta : \Theta\to L_1(\mu,\|\cdot\|_2)$ 
\end{enumerate}
\end{lemma}
\begin{proof}
Notice that by the DQM property of Condition~\ref{C.DQM}, we have for $h:=\theta'-\theta$ and $\theta'\to\theta$,
\begin{align*}
\|\sqrt{p_{\theta'}}-\sqrt{p_{\theta}}\|_{L_2} &\le
\|\sqrt{p_{\theta+h}}-\sqrt{p_{\theta}}-\frac12 h^T s_{\theta}\sqrt{p_{\theta}}\|_{L_2}
+
\|\frac12 h^T s_{\theta}\sqrt{p_{\theta}}\|_{L_2}\\
&=
o(1) + \sqrt{h^T I_\theta(\P) h} = o(1).
\end{align*}
Using Cauchy-Schwarz, $\|p_{\theta'} - p_{\theta}\|_{L_1} = \|(\sqrt{p_{\theta'}}-\sqrt{p_{\theta}})(\sqrt{p_{\theta'}}+\sqrt{p_{\theta}})\|_{L_1}\le
\|\sqrt{p_{\theta'}}-\sqrt{p_{\theta}}\|_{L_2} \|\sqrt{p_{\theta'}}+\sqrt{p_{\theta}})\|_{L_2} \le
2\|\sqrt{p_{\theta'}}-\sqrt{p_{\theta}}\|_{L_2}$.
Moreover, Condition~\ref{C.L2cont} and the reverse triangle inequality imply
$$
|\sqrt{\trace I_\theta(\P)} - \sqrt{\trace I_{\theta'}(\P)}| = | \|s_{\theta}\sqrt{p_{\theta}}\|_{L_2} - \|s_{\theta'}\sqrt{p_{\theta'}}\|_{L_2}|
\le
\|s_{\theta}\sqrt{p_{\theta}}-s_{\theta'}\sqrt{p_{\theta'}}\|_{L_2} \to 0.
$$
Hence, $\theta\mapsto \trace I_\theta(\P) = \sqrt{\trace[I_\theta(\P)]}^2$ is continuous on $\Theta$. Finally, using Cauchy-Schwarz again, we have
\begin{align*}
\Big\| \|\dot{p}_{\theta'} - \dot{p}_{\theta}\|_2\Big\|_{L_1} &=
\Big\| \|s_{\theta'}\sqrt{p_{\theta'}}\sqrt{p_{\theta'}} - s_{\theta}\sqrt{p_{\theta}}\sqrt{p_{\theta}}\|_2\Big\|_{L_1}\\
&=
\Big\| \|(s_{\theta'}\sqrt{p_{\theta'}} - s_{\theta}\sqrt{p_{\theta}})\sqrt{p_{\theta'}} + s_{\theta}\sqrt{p_{\theta}}(\sqrt{p_{\theta'}}-\sqrt{p_{\theta}})\|_2\Big\|_{L_1}\\
&\le
\|s_{\theta'}\sqrt{p_{\theta'}} - s_{\theta}\sqrt{p_{\theta}}\|_{L_2} + \sqrt{\trace[I_\theta(\P)]}\|\sqrt{p_{\theta'}}-\sqrt{p_{\theta}}\|_{L_2} = o(1),
\end{align*}
where convergence follows by combining the facts we had already established before.
\end{proof}

%========================================================================

\section{Proofs of Section~\ref{sec:appl}}

\subsection{Proof of Lemma~\ref{lemma:location}}

Condition~\ref{C.measurable} is obviously satisfied with $s_\theta(x) := \frac{\dot{p}_\theta(x)}{p_\theta(x)} = \frac{-p'(x-\theta)}{p(x-\theta)}$. Moreover, $s_\theta(x)\sqrt{p_\theta(x)} = s_0(x-\theta)\sqrt{p(x-\theta)}$ is continuous in $\theta$ and $\|s_\theta\sqrt{p_\theta}\|_{L_2} = \|s_0\sqrt{p_0}\|_{L_2} = \|p'/\sqrt{p}\|_{L_2}< \infty$. Thus, by Proposition~2.29 of \citet{vanderVaart07}, we have $\|s_\theta\sqrt{p_\theta} - s_{\theta'}\sqrt{p_{\theta'}}\|_{L_2} \to 0$ as $\theta'\to\theta$, i.e., Condition~\ref{C.L2cont} holds. In particular, $\theta\mapsto \|s_\theta\sqrt{p_\theta}\|^2_{L_2} = I_\theta(\P)$ is continuous. Condition~\ref{C.DQM} is now a consequence of Lemma~7.6 of \citet{vanderVaart07}. That $\theta\mapsto p_\theta(x)=p(x-\theta)$ is three times continuously differentiable for every $x\in\R$ is also obvious from Assumption~\ref{A.diffable} and it is equally trivial that $\theta\mapsto \|\dot{p}_\theta\|_1 = \|p'\|_1$ is continuous and finite, and the same holds for the second and third derivatives. Finally $\theta\mapsto \dddot{p}_\theta(x) = -p^{(3)}(x-\theta)$ is continuous in $\theta$ for every $x\in\R$ and $\|\dddot{p}_\theta\|_1 = \|\dddot{p}_0\|_1 = \|p^{(3)}\|_1<\infty$. Thus, another application of Proposition~2.29 in \citet{vanderVaart07} yields the desired $L_1$-continuity. That $T_m(x-\theta)$ is a consistent quantizer easily follows from shift invariance of Lebesgue measure. Finally, for $\theta\in\R$ notice that the quantities $(P_\theta T_{m,\theta}^{-1}(\{j\}))_{j\in[k_m]} = (P_0 T_{m,0}^{-1}(\{j\}))_{j\in[k_m]}$ and $(\int_{T_{m,\theta}^{-1}(\{j\})} s_\theta(x)p_\theta(x)\lambda(dx))_{j\in[k_m]} = (\int_{T_{m,0}^{-1}(\{j\})} s_0(x)p_0(x)\lambda(dx))_{j\in[k_m]}$ used in Lemma~\ref{lemma:finitedimlOpti} to characterize the Fisher-Information $I_\theta(QT_{m,\theta}\P)$ do not depend on $\theta$. \hfill\qed

\subsection{Proof of Lemma~\ref{lemma:gauss}}

For simplicity we suppress the dependence of $k_m$ on $m$. Notice that $J:= \{l\in[k]: 0<\lambda(B_{l})<\infty\} = \{2,3,\dots, k-1\}$ and $K := \bigcup_{j\in J}B_j = (\Phi^{-1}(\frac{1}{k}), \Phi^{-1}(1-\frac{1}{k})]$. Now write $x_{j} = \Phi^{-1}\left(\frac{j}{k}\right)$, fix $a>0$ and choose $j_{a,k}\in\N$ such that $x_{j_{a,k}-1} \le -a<x_{j_{a,k}}$, for all $k\in2\N$. Note that $j_{a,k}\to \infty$ as $k\to\infty$. Now use symmetry of $p$ and $\bar{p}$ to get
$$
 \int_{-\infty}^{-a} |\bar{p}(x)|dx =\int_{a}^\infty |\bar{p}(x)|dx\le \int_{a}^\infty p(x)dx = \int_{-\infty}^{-a} p(x)dx = \Phi(-a).
$$
Thus, $\left\| p-\bar{p}\right\|_{L_1} \le \int_{-a}^a |p(x) - \bar{p}(x)|dx + 4\Phi(-a)$ and, for sufficiently large $k$ such that $j_{a,k}\ge3$ and using symmetry and unimodality of $p$, we have
\begin{align}
\int_{-a}^a &|p(x) - \bar{p}(x)|dx
=\sum_{j\in J} \int_{B_j\cap[-a,a]} |p(x) - \bar{p}(x)| dx \notag\\
&\le
\sum_{j=j_{a,k}-1}^{k-j_{a,k}} \int_{B_j} \left|p(x) - \int_{B_j} p(y) \frac{dy}{\lambda(B_j)}\right| dx \notag\\
&\le
\sum_{j=j_{a,k}-1}^{k-j_{a,k}} \frac{1}{\lambda(B_j)} \int_{B_j}\int_{B_j} \left|p(x) -  p(y)\right| dx dy  \notag\\
&\le
\sum_{j=j_{a,k}-1}^{k-j_{a,k}} \left| p(x_{j}) - p(x_{j-1})\right| \left[x_{j}-x_{j-1} \right]\notag\\
&=
2\left(\sum_{j=j_{a,k}-1}^{k/2}  p(x_{j}) \left[x_{j}-x_{j-1} \right] - \sum_{j=j_{a,k}-1}^{k/2} p(x_{j-1})\left[x_{j}-x_{j-1} \right]\right).\label{eq:Riemann}
\end{align}
Notice that 
\begin{align*}
&p(x_{j_{a,k}})\left[x_{j_{a,k}}-(-a) \right] + \sum_{j=j_{a,k}+1}^{k/2}  p(x_{j}) \left[x_{j}-x_{j-1} \right]
\quad \text{and}\quad\\
&p(-a)\left[x_{j_{a,k}}-(-a) \right] + \sum_{j=j_{a,k}+1}^{k/2} p(x_{j-1})\left[x_{j}-x_{j-1} \right]
\end{align*}
are Riemann upper and lower sums of the continuous function $p$ on the domain $[-a,0]$ where the partition satisfies $\max\{x_{j}-x_{j-1}: j=j_{a,k},\dots, k/2\} \to 0$ and $x_{j_{a,k}}-(-a) \le x_{j_{a,k}}-x_{j_{a,k}-1}\to 0$ as $k\to\infty$. In particular, both Riemann sums 
converge to $\int_{-a}^0p(x)dx$ as $k\to\infty$ and thus, using boundedness of $p$ to control the remainder, \eqref{eq:Riemann} converges to $0$. Hence, $\limsup_{k\to\infty} \left\| p-\bar{p}\right\|_{L_1} \le 4\Phi(-a)$. Since $a>0$ was arbitrary, the limit must be zero. 

An analogous argument can be used to show that $\left\| p'-\bar{p'}\right\|_{L_1}\to0$ as $k\to\infty$. Finally, $\|p\mathds 1_{K^c}\|_{L_1} = \frac{2}{k}\to0$ as $k\to\infty$.
\hfill\qed

\subsection{Proof of Lemma~\ref{lemma:scale}}

Condition~\ref{C.measurable} is obviously satisfied with 
$$
s_\theta(x) := \frac{\dot{p}_\theta(x)}{p_\theta(x)} = -\frac{1}{2\theta}\left(1+ \theta^{-1/2}x\frac{p'(\theta^{-1/2}x)}{p(\theta^{-1/2}x)}\right).
$$ 
From Assumption~\ref{A.diffableScale} we also see that $s_1\sqrt{p} \in L_2(\lambda)$. For Condition~\ref{C.L2cont} we have to show that 
$$
\|s_\theta\sqrt{p_\theta} - s_{\theta'}\sqrt{p_{\theta'}}\|^2_{L_2} 
= \frac{1}{\theta^2}\int_{\X} \left( s_1(u)\sqrt{p(u)} - \left( \frac{\theta}{\theta'}\right)^{\frac54} s_1\left( u\sqrt{\frac{\theta}{\theta'}} \right)\sqrt{p\left( u\sqrt{ \frac{\theta}{\theta'}}\right)}\right)^2\,du
$$
converges to zero as $\theta'\to\theta$. But clearly, $\eta_t(u):= t^{5/2} s_1(tu)\sqrt{p(tu)}\to s_1(u)\sqrt{p(u)}$, as $t\to1$, for every $u\in\X$ and $\|\eta_t\|_{L_2} = t^{3/2}\|s_1\sqrt{p}\|_{L_2}\to\|s_1\sqrt{p}\|_{L_2} <\infty$ as $t\to1$. Thus, Condition~\ref{C.L2cont} holds in view of Proposition~2.29 of \citet{vanderVaart07}. $L_2$-continuity also implies continuity of the norm $\theta\mapsto \|s_\theta\sqrt{p_\theta}\|_{L_2} = \sqrt{I_\theta(\P_{scale})}$. Condition~\ref{C.DQM} is now a consequence of Lemma~7.6 of \citet{vanderVaart07}. That $\theta\mapsto p_\theta(x)$ is three times continuously differentiable for every $x\in\R$ is also obvious from Assumption~\ref{A.diffableScale} and it is easy to see that these derivatives are linear combinations of scaled versions of the functions $x\mapsto x^jp^{(j)}(x)$, $j=0,1,2,3$, where the coefficients are rational functions in $\theta>0$. Thus, by analogous arguments as above, involving Proposition~2.29 of \citet{vanderVaart07}, we conclude that $\theta \mapsto \dot{p}_\theta$, $\theta \mapsto \ddot{p}_\theta$ and $\theta \mapsto \dddot{p}_\theta$ are continuous as functions from $\theta$ to $L_1(\lambda)$.
To see that $T_{m,\theta}(x) := T_m(\theta^{-1/2}x)$ is a consistent quantizer of $\P_{scale}$, note that $B_j(\theta) := T_{m,\theta}^{-1}(\{j\}) = \sqrt{\theta} B_j$, $J(\theta):= \{l\in[k_m]: 0<\lambda(B_{l}(\theta))<\infty\} = \{l\in[k_m]: 0<\lambda(B_{l})<\infty\} = J$, $K_{J}(\theta) := \bigcup_{j\in J}B_{j}(\theta) = \sqrt{\theta} K$, and thus
\begin{align*}
\bar{p}_\theta (x) &:= \sum_{j\in J(\theta)} \frac{\int_{B_j(\theta)}p_\theta(y)\lambda(dy)}{\lambda(B_j(\theta))} \mathds 1_{B_j(\theta)}(x)
=
\sum_{j\in J} \frac{\int_{\sqrt{\theta} B_j}\theta^{-1/2}p(\theta^{-1/2}y)\lambda(dy)}{\sqrt{\theta} \lambda(B_j)} \mathds 1_{B_j}(\theta^{-1/2}x) \\
&= \theta^{-1/2}\bar{p}_1(\theta^{-1/2}x).
\end{align*}
Similarly, we also have $\dot{p}_\theta(x) = \theta^{-3/2}\dot{p}_1(\theta^{-1/2}x)$ and $\bar{\dot{p}}_\theta(x) = \theta^{-3/2}\bar{\dot{p}}_1(\theta^{-1/2}x)$. Now the quantities $\|(p_\theta - \bar{p}_\theta)\mathds 1_{K(\theta)}\|_{L_1} = \|(p_1 - \bar{p}_1)\mathds 1_{K(1)}\|_{L_1}$ and $P_\theta(K(\theta)^c) = P_1(K(1)^c)$ in \eqref{eq:T_kErr} of Definition~\ref{def:quantizer} are seen to be $\theta$-invariant. Furthermore, $\|(\dot{p}_\theta - \bar{\dot{p}}_\theta)\mathds 1_{K(\theta)}\|_{L_1} = \frac{1}{\theta}\|(\dot{p}_1 - \bar{\dot{p}}_1)\mathds 1_{K(1)}\|_{L_1}$, which shows that $\Delta_m(\theta_m)\to0$ as $m\to\infty$, for every sequence $\theta_m\to\theta\in\Theta=(0,\infty)$. Finally, for $\theta\in\R$ consider the quantities $P_\theta T_{m,\theta}^{-1}(\{j\}) = \int_{\sqrt{\theta} B_j} \theta^{-1/2}p(\theta^{-1/2}x)dx = P_1 T_{m,1}^{-1}(\{j\}))$ and $\int_{T_{m,\theta}^{-1}(\{j\})} s_\theta(x)p_\theta(x)dx = \int_{\sqrt{\theta} B_j} \theta^{-3/2}\dot{p}_1(\theta^{-1/2}x)dx = \frac{1}{\theta}\int_{T_{m,1}^{-1}(\{j\})} s_1(x)p_1(x)dx$ used in Lemma~\ref{lemma:finitedimlOpti} to characterize the Fisher-Information $I_\theta(QT_{m,\theta}\P)$. Thus, we have $I_\theta(QT_{m,\theta}\P) = \frac{1}{\theta^2}I_1(QT_{m,1}\P)$. \hfill\qed

%========================================================================
%===============================================================

\section{Technical lemmas}
\label{sec:app:technical}
\begin{lemma}\label{lemma:BinomialDiff}
Let $\P=(P_\theta)_{\theta\in\Theta}$ be a statistical model with open parameter space $\Theta\subseteq\R$ and a finite sample space $\X$. Write $p_\theta(x) = P_\theta(\{x\})$ for the corresponding probability mass functions. If $\theta\mapsto p_\theta(x)$ is twice differentiable on $\Theta$ for each $x\in\X$ and $Q\in\mathcal Q_\alpha(\X)$ is an arbitrary $\alpha$-private channel on $\X$, then $\theta\mapsto I_\theta(Q\P)$ is differentiable on $\Theta$.
\end{lemma}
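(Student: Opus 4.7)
The plan is to write $I_\theta(Q\P)$ as an integral of a ratio of smooth functions of $\theta$ that is bounded below in the denominator, and then differentiate under the integral via a uniform-in-$z$ Taylor expansion.

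First, apply Lemma~\ref{lemma:Qdensities} with $\nu(dz):=Q(dz|x_0)$ for some fixed $x_0\in\X$, obtaining $\nu$-densities $q(z|x)\in[e^{-\alpha},e^\alpha]$ of $Q(\cdot|x)$. Note that since $\X$ is finite and $\theta\mapsto p_\theta(x)$ is (in particular once) differentiable with $p_\theta(x)\ge 0$, any $x$ with $p_{\theta_0}(x)=0$ is a local minimum, forcing $\dot{p}_{\theta_0}(x)=0$; it follows that $\P$ is DQM at every $\theta\in\Theta$ (with score $s_\theta(x)=\dot p_\theta(x)/p_\theta(x)$ on the support). Lemma~\ref{lemma:DQM} therefore yields
$$
I_\theta(Q\P) \;=\; \int_{\mathcal Z} \frac{A(\theta,z)^2}{B(\theta,z)}\, \nu(dz), \quad A(\theta,z):=\sum_{x\in\X} q(z|x)\dot p_\theta(x), \;\; B(\theta,z):=\sum_{x\in\X} q(z|x) p_\theta(x),
$$
with $B(\theta,z)\in[e^{-\alpha},e^\alpha]$ for all $z$ and $\theta$.

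Fix $\theta_0\in\Theta$. Using twice differentiability of $\theta\mapsto p_\theta(x)$ at $\theta_0$ I expand, for every $x\in\X$,
$$
p_{\theta_0+h}(x) = p_{\theta_0}(x) + h\dot p_{\theta_0}(x) + r_1(h,x), \qquad \dot p_{\theta_0+h}(x) = \dot p_{\theta_0}(x) + h\ddot p_{\theta_0}(x) + r_2(h,x),
$$
with $r_i(h,x)/h\to 0$ as $h\to 0$. Because $\X$ is finite and $|q(z|x)|\le e^\alpha$, summing against $q(z|x)$ preserves $o(h)$ uniformly in $z$: writing $A'(\theta_0,z):=\sum_x q(z|x)\ddot p_{\theta_0}(x)$, one obtains
$$
B(\theta_0+h,z) = B(\theta_0,z) + h\,A(\theta_0,z) + \rho_1(h,z), \quad A(\theta_0+h,z) = A(\theta_0,z) + h\,A'(\theta_0,z) + \rho_2(h,z),
$$
with $\sup_{z}|\rho_i(h,z)|/|h|\to 0$.

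Since $B(\theta_0+h,z)B(\theta_0,z)\ge e^{-2\alpha}$, the identity
$
\tfrac{1}{B(\theta_0+h,z)}-\tfrac{1}{B(\theta_0,z)} = -\tfrac{hA(\theta_0,z)+\rho_1(h,z)}{B(\theta_0,z)B(\theta_0+h,z)}
$
gives $1/B(\theta_0+h,z)=1/B(\theta_0,z)-hA(\theta_0,z)/B(\theta_0,z)^2+o(h)$ uniformly in $z$. Multiplying this expansion by the expansion of $A(\theta_0+h,z)^2$ and using the uniform bounds $|A(\theta_0,z)|\le e^\alpha\|\dot p_{\theta_0}\|_{\ell_1}$, $|A'(\theta_0,z)|\le e^\alpha\|\ddot p_{\theta_0}\|_{\ell_1}$ (both finite because $\X$ is finite), I get
$$
\frac{A(\theta_0+h,z)^2}{B(\theta_0+h,z)} = \frac{A(\theta_0,z)^2}{B(\theta_0,z)} + h\,D(\theta_0,z) + \rho(h,z),\qquad D(\theta_0,z):=\frac{2A(\theta_0,z)A'(\theta_0,z)B(\theta_0,z)-A(\theta_0,z)^3}{B(\theta_0,z)^2},
$$
with $\sup_z|\rho(h,z)|/|h|\to 0$. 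Integrating against the \emph{probability} measure $\nu$ and letting $h\to 0$ yields
$$
\frac{d}{d\theta}I_\theta(Q\P)\bigg|_{\theta=\theta_0} = \int_{\mathcal Z} D(\theta_0,z)\,\nu(dz),
$$
which is finite since the integrand is bounded in absolute value by $e^{5\alpha}(2\|\dot p_{\theta_0}\|_{\ell_1}\|\ddot p_{\theta_0}\|_{\ell_1}+\|\dot p_{\theta_0}\|_{\ell_1}^3)$.

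The only nontrivial point is establishing that the Taylor remainders in $B$ and $A$ are $o(h)$ \emph{uniformly in} $z\in\mathcal Z$, so that one may differentiate under the integral without a domination argument that would require uniform local bounds on $\ddot p_\theta(x)$ away from $\theta_0$. Finiteness of $\X$ together with the bound $q(z|x)\le e^\alpha$ makes this immediate, since the $z$-dependence is squeezed out of the pointwise-in-$x$ remainders by a single finite sum.
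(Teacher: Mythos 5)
Your main argument is correct and essentially parallel to the paper's: both proofs write
$I_\theta(Q\P)=\int_{\mathcal Z} \dot q_\theta(z)^2/q_\theta(z)\,\nu(dz)$
using the bounded $\nu$-densities $q(z|x)\in[e^{-\alpha},e^\alpha]$ from Lemma~\ref{lemma:Qdensities}, observe $q_\theta(z)\ge e^{-\alpha}$, and differentiate under the integral. You justify the interchange via a uniform-in-$z$ Taylor expansion of $A(\theta,z)$ and $B(\theta,z)$ (which works because finiteness of $\X$ and $|q(z|x)|\le e^\alpha$ squeeze the $z$-dependence out of the pointwise-in-$x$ remainders), whereas the paper establishes pointwise convergence of the difference quotient and then invokes dominated convergence with an envelope constructed from $\sup_{h\in U}$ of suitable difference quotients. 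These are two standard and interchangeable ways to differentiate under a (probability) integral; yours gives uniform convergence in $z$ and is, if anything, a bit more self-contained.

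One aside needs repair. The claim that ``$p_{\theta_0}(x)=0$ forces $\dot p_{\theta_0}(x)=0$, hence $\P$ is DQM'' does not follow. DQM at such an $x$ requires $\sqrt{p_{\theta_0+h}(x)}=o(h)$, i.e.\ $p_{\theta_0+h}(x)=o(h^2)$, but twice differentiability with $p_{\theta_0}(x)=\dot p_{\theta_0}(x)=0$ only yields $p_{\theta_0+h}(x)=\tfrac12\ddot p_{\theta_0}(x)h^2+o(h^2)=O(h^2)$; for example $p_\theta(1)=\theta^2$, $p_\theta(0)=1-\theta^2$ on $\X=\{0,1\}$ fails DQM at $\theta_0=0$. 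Fortunately this detour is unnecessary: you do not need $\P$ itself to be DQM (and hence you do not need Lemma~\ref{lemma:DQM}) to write $I_\theta(Q\P)$ as you do. The model $Q\P$ has $\nu$-densities $q_\theta(z)=\sum_x q(z|x)p_\theta(x)$ bounded in $[e^{-\alpha},e^\alpha]$ and twice differentiable in $\theta$, so $Q\P$ is itself classically regular and its Fisher information is exactly $\int (\dot q_\theta/q_\theta)^2 q_\theta\,d\nu=\int A^2/B\,d\nu$ --- which is also precisely the step the paper takes. Replacing the DQM-of-$\P$ remark with this direct observation closes the gap and leaves the rest of your argument intact.
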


\begin{proof}
For given $Q\in\mathcal Q_\alpha(\X)$, let $(\Z,\mathcal G)$ denote the measurable space on which the private data are generated, that is, for each $x\in\X$, $Q(\cdot|x)$ is a probability measure on $(\Z,\mathcal G)$. For simplicity, write $I_\theta = I_\theta(Q\P)$. We will show that for every $\theta\in\Theta$ the quotient $\frac{I_{\theta+h}-I_\theta}{h}$ has a limit as $h\to0$. First, let $q(z|x)$ be as in Lemma~\ref{lemma:Qdensities}. In particular, $e^{-\alpha}\le q(z|x) \le e^\alpha$, for all $z\in\Z$ and all $x\in\X$, $(z,x)\mapsto q(z|x)p_\theta(x)$ is a $\nu\otimes\mu$-density of $Q(dz|x)P_\theta(dx)$ and $\nu$ is a probability measure on $(\Z,\mathcal G)$. The model $Q\P$ can thus be described by its $\nu$ densities $q_\theta(z) := \sum_{x\in\X} q(z|x)p_\theta(x)\ge e^{-\alpha}$. Clearly, these are twice differentiable with $\dot{q}_\theta(z) = \sum_{x\in\X} q(z|x)\dot{p}_\theta(x)$ and $\ddot{q}_\theta(z) = \sum_{x\in\X} q(z|x)\ddot{p}_\theta(x)$ and we have $I_\theta = \int_\Z \left( \frac{\dot{q}_\theta(z)}{q_\theta(z)}\right)^2 q_\theta(z)\nu(dz)$. Now consider the quotient 
\begin{align}
\frac{I_{\theta+h} - I_\theta}{h} &= \int_\Z \frac{1}{h}\left[\frac{\dot{q}_{\theta+h}^2(z)}{q_{\theta+h}(z)} - \frac{\dot{q}_{\theta}^2(z)}{q_{\theta}(z)}  \right]\nu(dz)\notag\\
&=
\int_\Z \frac{\dot{q}_{\theta+h}^2(z)-\dot{q}_\theta^2(z)}{h}\frac{1}{q_{\theta}(z)}\frac{q_{\theta}(z)}{q_{\theta+h}(z)} + \dot{q}_\theta^2(z) \frac{q_{\theta}(z)-q_{\theta+h}(z)}{hq_{\theta+h}(z)q_{\theta}(z)}  \nu(dz).\label{eq:DCT1}
\end{align} 
For every $z\in\Z$, the integrand converges to
$$
\frac{2\dot{q}_\theta(z)\ddot{q}_\theta(z)}{q_\theta(z)} - \frac{\dot{q}_\theta^2(z)\dot{q}_\theta(z)}{q_\theta^2(z)},
$$
as $h\to0$, which is clearly $\nu$-integrable in view of boundedness of $q_\theta$ and its derivatives. To see boundedness of the integrand in \eqref{eq:DCT1} by a $\nu$-integrable function independent of $h$, first note that $e^{-\alpha}\le q_\theta(z)\le e^\alpha$. Next, notice that with $D(0):= \sum_{x\in\X} |\dot{p}_\theta(x)|$, the function $h\mapsto D(h) := \sum_{x\in\X} \frac{1}{h}\left| p_{\theta+h}(x) - p_\theta(x)\right|$ is continuous on a compact neighborhood of zero, say $U$, and thus attains its maximum. Therefore, 
$$
\sup_{h\in U}\left|\frac{q_{\theta+h}(z)-q_\theta(z)}{h}\right| \le e^\alpha \sup_{h\in U} D(h) < \infty.
$$
Similarly, with $\dot{D}(h) := \sum_{x\in\X} \frac{1}{h}\left| \dot{p}_{\theta+h}(x) - \dot{p}_\theta(x)\right|$,
\begin{align*}
&\sup_{h\in U}\left|\frac{\dot{q}_{\theta+h}^2(z)-\dot{q}_\theta^2(z)}{h}\right|
= \sup_{h\in U}\left|\frac{\left[\dot{q}_{\theta+h}(z)-\dot{q}_\theta(z)+\dot{q}_\theta(z)\right]^2-\dot{q}_\theta^2(z)}{h}\right|
\\
&\quad=
\sup_{h\in U}\left|
\frac{\dot{q}_{\theta+h}(z) - \dot{q}_{\theta}(z)}{h} \left[ \dot{q}_{\theta+h}(z) - \dot{q}_{\theta}(z) \right] + 2 \dot{q}_{\theta}(z) \frac{\dot{q}_{\theta+h}(z) - \dot{q}_{\theta}(z)}{h}
\right| \\
&\quad\le
\sup_{h\in U}\left( e^{2\alpha} |h| \dot{D}(h)^2 + 2e^\alpha\dot{D}(h)\right) < \infty.
\end{align*}
The proof is now finished by the dominated convergence theorem, recalling the fact that $\nu$ is a probability measure.
\end{proof}

\begin{lemma}\label{lemma:Qdensities}
Let $\alpha\in(0,\infty)$ and $Q\in\mathcal Q_\alpha(\X)$. If $x_0\in\X$ and $\nu(dz) := Q(dz|x_0)$, then $Q(\cdot|x)\ll \nu$ for all $x\in\X$. Moreover, if $\mathcal P\subseteq\mathfrak P(X)$ is a set of probability measures on sample space $(\X, \mathcal F)$ dominated by the $\sigma$-finite measure $\mu$, then there exists a measurable function $q:\mathcal Z\times\X\to[e^{-\alpha},e^{\alpha}]$, such that for every $P\in\P$, $(z,x)\mapsto q(z,x)\frac{dP}{d\mu}(x)$ is a $\nu\otimes \mu$-density of the probability measure $Q(dz|x)P(dx)$. Thus, we use the notation $q(z|x) := q(z,x)$ suggestive of conditioning.
\end{lemma}

\begin{proof}
Let $(\mathcal Z,\mathcal G)$ denote the measurable space in which $Q$ generates its outputs. For $x\in\X$ and $A\in\mathcal G$, we have $Q(A|x) \le e^{\alpha} Q(A|x_0)$ and hence $Q(\cdot|x)\ll\nu$. For the second claim, define the measures $R(dz,dx):= Q(dz|x)\mu(dx)$ and $R_0 := \nu\otimes\mu$. For $C\in\mathcal G\otimes\mathcal F$, we have $R(C) = \int_{\X}Q(C_x|x)\mu(dx)\le e^{\alpha}\int_{\X}Q(C_x|x_0)\mu(dx) = e^{\alpha}\int_{\X}\nu(C_x)\mu(dx) = e^{\alpha}R_0(C)$ and thus, $R\ll R_0$. Furthermore, since $\mu$ is $\sigma$-finite, there exists a measurable partition $(B_j)_{j\in\N}$ of $\mathcal X$ such that $\mu(B_j)<\infty$ for all $j$. Hence, taking the partition $C_j = \mathcal Z\times B_j$, we see that $R$ and $R_0$ are $\sigma$-finite measures as well. Now take $\tilde{q} := \frac{dR}{dR_0}:\mathcal Z\times\X\to\R_+$, which is clearly measurable, and notice that for arbitrary $C\in\mathcal G\otimes\mathcal F$,
\begin{equation}\label{eq:lemma:Qdensities}
\int_{C}\tilde{q}(z,x)\frac{dP}{d\mu}(x) R_0(dz,dx) =
\int_{C} \frac{dR}{dR_0} \frac{dP}{d\mu} dR_0 
= \int_{C} \frac{dP}{d\mu} dR 
= R_P(C),
\end{equation}
where $R_P(dz,dx) := Q(dz|x)P(dx)$.
Hence, $\tilde{q}(z,x)\frac{dP}{d\mu}(x)$ is an $R_0$-density of $R_P$. Furthermore, for arbitrary $C\in\mathcal G\otimes\mathcal F$, we have
\begin{align*}
&\int_{C} e^{-\alpha} dR_0 = \int_\X e^{-\alpha}Q(C_x|x_0)\mu(dx) 
\le 
\int_\X Q(C_x|x)\mu(dx) = R(C) \\
&\quad= \int_{C} \tilde{q} dR_0 = \int_\X Q(C_x|x)\mu(dx)\le \int_\X e^{\alpha}Q(C_x|x_0)\mu(dx)
=
\int_{C} e^{\alpha} dR_0.
\end{align*}
Thus we have $e^{-\alpha}\le \tilde{q}(z,x)\le e^{\alpha}$, for $R_0$-almost all $(z,x)\in\mathcal Z\times\X$. Now set $\tilde{q}$ equal to $1$ on the corresponding $R_0$-null set to obtain $q$, which does not have an effect on the validity of \eqref{eq:lemma:Qdensities} and also the desired boundedness property is satisfied.
\end{proof}

The following result is an extension of Lemma~\ref{lemma:Qdensities} to the sequentially interactive case.

\begin{lemma}\label{lemma:QdensitiesSI}
Let $\alpha\in(0,\infty)$, $n\in\N$ and $Q$ be $\alpha$-sequentially interactive from $(\X^n, \mathcal F^n)$ to $(\mathcal Z^{(n)}, \mathcal G^{(n)})$ as in Section~\ref{sec:DefPriv} and suppose that the $\sigma$-fields $\mathcal F, \mathcal G_1,\dots, \mathcal G_n$ are countably generated. Fix $x^*\in\X^n$, $i\in[n]$ and define 
\begin{align*}
Q^{(i)}(dz_{1:i}|x_{1:i}) &:= Q_{z_{1:i-1}}(dz_i|x_i) 
Q_{z_{1:i-2}}(dz_{i-1} |x_{i-1})\cdots Q_\varnothing(dz_1|x_1) \\
\nu^{(i)}(dz_{1:i}) &:= Q_{z_{1:i-1}}(dz_i|x_i^*) 
Q_{z_{1:i-2}}(dz_{i-1} |x_{i-1}^*)\cdots Q_\varnothing(dz_1|x_1^*), \quad\text{and}\\
\nu_{z_{1:i-1}}(dz) &:= Q_{z_{1:i-1}}(dz_i|x_i^*), 
\end{align*}
where we adopt the convention that whenever a sub- or superscript is equal to $0$ the corresponding quantity is omitted.
If the collection $\mathcal P\subseteq\mathfrak P(\X,\mathcal F)$ of probability measures is dominated by the $\sigma$-finite measure $\mu$ with densities $p_P = \frac{dP}{d\mu}$, then the following hold true for each $i\in[n]$:
\begin{enumerate}[i)]
\item\label{lemma:QdensitiesSI:a} For all $P\in\mathcal P$ and all $A\in\mathcal G^{(i)}$ we have $e^{-i\alpha}Q^{(i)}P^i(A)\le \nu^{(i)}(A) \le e^{i\alpha}Q^{(i)}P^i(A)$ and we write $r_{i,P} := \frac{dQ^{(i)}P^i}{d\nu^{(i)}}$ for a corresponding density.
\item\label{lemma:QdensitiesSI:b}  For every $i\in[n]$ there exists a measurable function $q_i:\mathcal Z_i\times\X\times \mathcal Z^{(i-1)}\to[0,\infty)$, such that for every $P\in\P$ and for every $z_{1:i-1}\in\mathcal Z^{(i-1)}$,
$$
(z_{i},x_i)\mapsto q_i(z_i| x_i, z_{1:i-1}) p_P(x_i)
$$ 
is a $\nu_{z_{1:i-1}}\otimes \mu$-density of the joint distribution $Q_{z_{1:i-1}}(dz_i|x_i)P(dx_i)$ of $Z_i, X_i$ given $Z_{1:i-1}=z_{1:i-1}$ and $q_i(z|x,z_{1:i-1}) \in[e^{-\alpha},e^\alpha]$ for $\nu_{z_{1:i-1}}\otimes\mu$-almost all $(z,x)$.
\item\label{lemma:QdensitiesSI:c} For every $P\in\P$ and for $q_{i,P}(z_i|z_1,\dots,z_{i-1}) := \int_\X q_i(z_i|x_i,z_{1:i-1})P(dx_i)$ we have $q_{i,P}(z_i|z_1,\dots,z_{i-1})r_{i-1,P}(z_{1:i-1}) = r_{i,P}(z_{1:i})$, $\nu^{(i)}$-almost surely. 
\end{enumerate}
\end{lemma}

\begin{proof}
For $x\in\X^i$ and $A\in\mathcal G^{(i)}$, by $\alpha$-differential privacy \eqref{eq:alphaPriv} we have $Q^{(i)}(A|x) \le e^{i\alpha} Q^{(i)}(A|x^*) = e^{i\alpha}\nu^{(i)}(A)$ and hence 
$$
[Q^{(i)}P^i](A) = \int_{\X^i} Q^{(i)}(A|x) P^i(dx) \le e^{i\alpha} \nu^{(i)}(A).
$$
Exchanging the roles of $x$ and $x^*$ proves the claim in \ref{lemma:QdensitiesSI:a}).

For Part~\ref{lemma:QdensitiesSI:b}), first note that $\mu$ is dominated by a probability measure $\bar{\mu}$, say, and write $m = \frac{d\mu}{d\bar{\mu}}$. Now fix $z_{1:i-1}\in\mathcal Z^{(i-1)}$ and define the Markov kernels $\bar{R}(dz_i,dx_i|z_{1:i-1}):= Q_{z_{1:i-1}}(dz_i|x_i)\bar{\mu}(dx_i)$, $\bar{S}(dz_i,dx_i|z_{1:i-1}):= \nu_{z_{1:i-1}}(dz_i)\otimes\bar{\mu}(dx_i)$, $R(dz_i,dx_i|z_{1:i-1}):= Q_{z_{1:i-1}}(dz_i|x_i)\mu(dx_i)$ and $S(dz_i,dx_i|z_{1:i-1}):= \nu_{z_{1:i-1}}(dz_i)\otimes\mu(dx_i)$. For $C\in\mathcal G_i\otimes\mathcal F$, we have 
\begin{align*}
\bar{R}(C|z_{1:i-1}) &= \int_{\X} Q_{z_{1:i-1}}(C_{x}|x)\bar{\mu}(dx)
\le e^{\alpha} \int_\X Q_{z_{1:i-1}}(C_{x}|x^*)\bar{\mu}(dx) = e^{\alpha}\bar{S}(C|z_{1:i-1})
\end{align*} 
and thus, $\bar{R}_{z_{1:i-1}}\ll \bar{S}_{z_{1:i-1}}$. By Theorem~58 in \citet[][page~52]{Dellacherie82} there exists measurable $\bar{q}_i :\mathcal Z_i\times\X\times\mathcal Z^{(i-1)}\to[0,\infty)$ such that for all $z_{1:i-1}$, $(z,x)\mapsto\bar{q}_i(z|x,z_{1:i-1})$ is a $\bar{S}_{z_{1:i-1}}$-density of $\bar{R}_{z_{1:i-1}}$, which is clearly independent of $P$. Thus, $(z,x)\mapsto q_i(z|x,z_{1:i-1}) := \frac{\bar{q}_i(z|x,z_{1:i-1})}{m(x)}$ is a ${S}_{z_{1:i-1}}$-density of ${R}_{z_{1:i-1}}$.
Notice that for arbitrary $C\in\mathcal G_i\otimes\mathcal F$,
\begin{align*}%\label{eq:lemma:QdensitiesSI}
&\int_{C}q_i(z|x,z_{1:i-1}) \cdot p_P(x)\; S_{z_{1:i-1}}(dz,dx) =
\int_{C} \frac{d{R}_{z_{1:i-1}}}{d{S}_{z_{1:i-1}}} \cdot p_P\; d{S}_{z_{1:i-1}} \notag 
= \int_{C} p_P\; d{R}_{z_{1:i-1}} \notag \\
&\quad= \int_{\X} Q_{z_{1:i-1}}(C_{x}|x) p_P(x)\mu(dx)
= \int_{\X} Q_{z_{1:i-1}}(C_{x}|x) P(dx).
\end{align*}
Hence, $q_i \cdot p_P$ is an $S_{z_{1:i-1}}$-density of $Q_{z_{1:i-1}}(dz|x) P(dx)$. 
Furthermore, still for fixed $z_{1:i-1}$ and for arbitrary $C\in\mathcal G_i\otimes\mathcal F$, we have
\begin{align*}
&\int_{C} e^{-\alpha} dS_{z_{1:i-1}} = \int_\X e^{-\alpha}\nu_{z_{1:i-1}}(C_x)\mu(dx) \\
&\le
\int_\X Q_{z_{1:i-1}}(C_x|x) \mu(dx) = R_{z_{1:i-1}}(C) 
= \int_{C} q_i dS_{z_{1:i-1}} \\
&\le \int_\X e^{\alpha} \nu_{z_{1:i-1}}(C_x) \mu(dx) 
= \int_{C} e^{\alpha} dS_{z_{1:i-1}}.
\end{align*}
We conclude that $e^{-\alpha}\le {q}_i\le e^{\alpha}$, for $S_{z_{1:i-1}}$-almost all $(z,x)\in\mathcal Z_i\times\X$. 

For \ref{lemma:QdensitiesSI:c}) simply integrate the density in part~\ref{lemma:QdensitiesSI:b}) with respect to $\mu$.
\end{proof}

\begin{lemma}\label{lemma:Q_kPriv}
Fix $k\in\N$ and $\alpha\in(0,\infty)$. For $Q\in\mathcal Q_\alpha(\X\to\mathcal Z)$ and $B=(B_j)_{j\in[k]}$ a measurable partition of $\X$, the `projection' $Q_k$ defined in \eqref{eq:ProjQ} is an $\alpha$-differentially private Markov kernel, that is, $Q_k\in\mathcal Q_\alpha([k]\to\mathcal Z)$.
\end{lemma}
\begin{proof}
The Markov property is obvious from the definition and since $[k]$ is a finite set. For $\alpha$-differential privacy, fix $C\in \mathcal G$ and $j,j'\in[k]$. 

If $j,j'\in J$, choose $x'\in B_{j'}$ such that $Q(C|x')\le Q_k(C|j')$, which is possible since the upper bound is equal to the expectation $\E_{P_{j'}}[Q(C|\cdot)]$. Therefore, we have $Q_k(C|j) = \int_\X Q(C|x) P_j(dx) \le e^\alpha Q(C|x') \le e^\alpha Q_k(C|j')$. 

If $j\in J$ and $j'\notin J$, then $Q_k(C|j) = \int_\X Q(C|x) P_j(dx) \le e^\alpha Q(C|x_0) = e^\alpha Q_k(C|j')$. The case $j'\in J$ and $j\notin J$ is analogous.

If both $j,j'\notin J$, the desired inequality is trivially satisfied.
\end{proof}

\begin{lemma}\label{lemma:c-d}
For $\gamma>0$ and $c,d\in\R$, we have
$$
|c^2-d^2| \le (1+\gamma)(c-d)^2 + \frac{c^2\land d^2}{\gamma}.
$$
\end{lemma}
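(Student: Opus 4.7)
The plan is to prove the inequality by factoring $c^2-d^2=(c-d)(c+d)$, controlling $|c+d|$ by the triangle inequality, and then splitting the resulting cross term via Young's inequality.

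First I would write
\[
|c^2-d^2| \;=\; |c-d|\cdot|c+d|.
\]
Using $|c+d| = |(c-d) + 2d| \le |c-d| + 2|d|$ and, symmetrically, $|c+d|\le |c-d|+2|c|$, we obtain
\[
|c+d| \;\le\; |c-d| + 2\bigl(|c|\wedge|d|\bigr),
\]
and therefore
\[
|c^2-d^2| \;\le\; (c-d)^2 + 2|c-d|\bigl(|c|\wedge|d|\bigr).
\]

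Next, I would apply Young's inequality $2xy \le \gamma x^2 + y^2/\gamma$ (valid for any $\gamma>0$, a direct consequence of $(\sqrt{\gamma}x - y/\sqrt{\gamma})^2\ge 0$) with $x=|c-d|$ and $y=|c|\wedge|d|$, giving
\[
2|c-d|\bigl(|c|\wedge|d|\bigr) \;\le\; \gamma (c-d)^2 + \frac{(|c|\wedge|d|)^2}{\gamma} \;=\; \gamma(c-d)^2 + \frac{c^2\wedge d^2}{\gamma}.
\]
Combining the two displays yields the claimed bound
\[
|c^2-d^2| \;\le\; (1+\gamma)(c-d)^2 + \frac{c^2\wedge d^2}{\gamma}.
\]

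There is no real obstacle here; both steps are elementary, and the only subtlety is the symmetric use of the triangle inequality to get the minimum $|c|\wedge|d|$ rather than just $|d|$ (which is what makes the bound useful in the application to \eqref{eq:sumsquEta1}, where the $|c|\wedge|d|$ factor is the one controlled by the DQM remainder).
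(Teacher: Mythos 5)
Your proof is correct. It takes a genuinely different route from the paper. The paper fixes $c,d$ (assuming WLOG $|c|>|d|>0$) and treats the claimed bound as a one-variable optimization problem in $\gamma$: it defines $f(\gamma) := (1+\gamma)(c-d)^2 + d^2(1+\gamma^{-1}) - c^2$, notes that $f(\gamma)\to\infty$ as $\gamma\to0$ and $\gamma\to\infty$, locates the interior critical point $\gamma^* = d/(c-d)$, and verifies $f(\gamma^*)=0$ by direct computation. You instead factor $|c^2-d^2| = |c-d|\,|c+d|$, use the triangle inequality twice to obtain the symmetric bound $|c+d|\le |c-d| + 2(|c|\wedge|d|)$, and then absorb the cross term with Young's inequality $2xy \le \gamma x^2 + y^2/\gamma$. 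Your argument is purely algebraic and avoids the calculus step entirely; it also dispenses with the WLOG reduction and the implicit sign bookkeeping needed for $\gamma^*$ to lie in $(0,\infty)$ (the paper's critical point $\gamma^*=d/(c-d)$ is only positive when $d>0$, whereas your route is manifestly sign-agnostic). The paper's optimization argument does have one conceptual advantage: it makes visible that the bound is tight, since $f$ vanishes exactly at $\gamma^*$, whereas your proof discards some slack in the triangle-inequality step. Both approaches are short; yours is arguably more robust and self-contained.
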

\begin{proof}

Without loss of generality, let $|c|>|d|>0$ and define
$$
f(\gamma) := (1+\gamma)(c-d)^2 + d^2(1+\gamma^{-1}) - c^2.
$$
We need to show that $f(\gamma)\ge0$. For both $\gamma\to0$ and $\gamma\to\infty$ we have $f(\gamma)\to\infty$. Thus, we only need to show that $f$ is non-negative at its minimum. The first order condition for a minimum is
$$
f'(\gamma) = (c-d)^2 -\gamma^{-2}d^2 = 0,
$$
which is achieved at $\gamma^* = \frac{d}{c-d}$. But 
$$
f(\gamma^*) = \left( \frac{c-d}{c-d}+ \frac{d}{c-d}\right)(c-d)^2 + d^2\left(\frac{d}{d} + \frac{c-d}{d}\right) - c^2
= c(c-d) + cd - c^2 = 0,
$$
which finishes the proof.
\end{proof}

\begin{lemma}\label{lemma:pre-postProcessing}
Let $(\mathcal W, \mathcal E)$, ($\X,\mathcal F)$, $(\mathcal Y, \mathcal G)$ and $(\Z,\mathcal H)$ be measurable spaces and consider $R\in\mathfrak P(\mathcal W \to\X)$, $Q\in\mathfrak P(\X\to\mathcal Y)$ and $T\in\mathfrak P(\mathcal Y\to\Z)$. If $Q$ is $\alpha$-private, then so are $QR$ and $TQ$.
\end{lemma}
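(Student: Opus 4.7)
The plan is to verify the defining $\alpha$-privacy inequality \eqref{eq:QNI} separately for $TQ$ and $QR$, working directly from the integral representations of kernel composition.

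For post-processing, i.e., for $TQ\in\mathfrak P(\X\to\Z)$, I would fix $C\in\mathcal H$ and $x,x'\in\X$ and note
$$[TQ](C|x) \;=\; \int_{\mathcal Y} T(C|y)\,Q(dy|x).$$
The function $y\mapsto T(C|y)$ is $\mathcal G$-measurable and takes values in $[0,1]$. The standard route is to approximate it from below by non-negative simple functions $f_n=\sum_i c_i^{(n)}\mathds 1_{A_i^{(n)}}$ with $A_i^{(n)}\in\mathcal G$ disjoint. By linearity of the integral together with the marginal $\alpha$-privacy of $Q$ applied to each $A_i^{(n)}$,
$$\int f_n\,dQ(\cdot|x) \;=\; \sum_i c_i^{(n)} Q(A_i^{(n)}|x) \;\le\; e^\alpha \sum_i c_i^{(n)} Q(A_i^{(n)}|x') \;=\; e^\alpha \int f_n\,dQ(\cdot|x').$$
Monotone convergence on both sides then yields $[TQ](C|x)\le e^\alpha [TQ](C|x')$, as required.

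For pre-processing, i.e., for $QR\in\mathfrak P(\mathcal W\to\mathcal Y)$, I would fix $B\in\mathcal G$ and $w,w'\in\mathcal W$ and exploit the fact that $\alpha$-privacy of $Q$ is a pointwise statement uniform in $x,x'$. Concretely, writing
$$[QR](B|w) \;=\; \int_\X Q(B|x)\,R(dx|w), \qquad [QR](B|w') \;=\; \int_\X Q(B|x')\,R(dx'|w'),$$
one multiplies the integrand in the first expression by the total mass $\int_\X R(dx'|w')=1$ and uses Fubini to obtain the iterated integral
$$[QR](B|w) \;=\; \int_\X\int_\X Q(B|x)\,R(dx|w)\,R(dx'|w').$$
Applying the $\alpha$-privacy inequality $Q(B|x)\le e^\alpha Q(B|x')$ under the double integral and swapping the order of integration gives
$$[QR](B|w) \;\le\; e^\alpha \int_\X\int_\X Q(B|x')\,R(dx|w)\,R(dx'|w') \;=\; e^\alpha\,[QR](B|w').$$

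There is essentially no obstacle here: the post-processing part is the standard approximation-by-simple-functions argument, and the pre-processing part is a single application of Fubini combined with the uniformity in $x,x'$ of \eqref{eq:QNI}. The only minor care needed is measurability of the integrands, which follows from the definition of a Markov kernel (so $x\mapsto Q(B|x)$ and $y\mapsto T(C|y)$ are measurable), justifying the Fubini step and the simple-function approximation.
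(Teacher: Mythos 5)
Your post-processing argument ($TQ$) follows essentially the same route as the paper: you verify the inequality on indicator functions, extend it to simple functions by linearity, and pass to the limit; the paper states this as "approximate the integrand $y\mapsto T(Z|y)$ by simple functions."

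Your pre-processing argument ($QR$) is correct but takes a genuinely different route. The paper fixes a single "near-infimizer" $x'\in\X$ with $Q(F|x')\le \int_\X Q(F|\bar{x})\,R(d\bar{x}|w')$, which requires a short case distinction on whether $\inf_{\bar{x}}Q(F|\bar{x})$ is attained, and then chains $\int Q(F|x)\,R(dx|w)\le e^\alpha Q(F|x')\le e^\alpha\,QR(F|w')$. You instead introduce the auxiliary integration $\int_\X R(dx'|w')=1$, apply the pointwise bound $Q(B|x)\le e^\alpha Q(B|x')$ under the product measure $R(\cdot|w)\otimes R(\cdot|w')$, and integrate out the $x$-variable; this is a Tonelli-type averaging argument that sidesteps the existence-of-a-near-minimizer step entirely. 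Your version is shorter and cleaner, and it uses only the non-negativity of the integrand to justify the iterated integral; the paper's version has the (very minor) appeal of avoiding any mention of a product measure. Both are complete and correct. One cosmetic nitpick: calling the step "Fubini" slightly oversells it -- you never genuinely swap an order of integration over a joint integrand, you just insert a factor of $1$ and then collapse the inner integral; "Tonelli" or simply "averaging over $x'$" would be more accurate.
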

\begin{proof}
Clearly $QR\in\mathfrak P(\mathcal W\to\mathcal Y)$. Fix $F\in\mathcal F$ and $w,w'\in\mathcal W$ and pick $x'\in\X$ such that $Q(F|x')\le \int_{\X} Q(F|x)R(dx|w')$. To see that this is possible, first consider the case where $\inf_{x\in\X}Q(F|x)$ is attained. In that case we can simply take $x'$ to be the minimizer. Otherwise, we have $\inf_{\bar{x}\in\X}Q(F|\bar{x}) < Q(F|x)$, for all $x\in\X$. By strict monotonicity of the integral, this implies $\inf_{\bar{x}\in\X}Q(F|\bar{x}) < QR(F|w')$. Since the infimum can be approximated to arbitrary precision, we can find the desired $x'$. Therefore, using $\alpha$-privacy of $Q$, we have $QR(F|w) = \int_{\X} Q(F|x)R(dx|w) \le e^\alpha Q(F|x') \le e^\alpha QR(F|w')$.

To establish $\alpha$-privacy of $TQ\in\mathfrak P(\X\to\Z)$, fix $Z\in\Z$, $x,x'\in\X$. Notice that for any $Y\in\mathcal G$, we have $\int_{\mathcal Y} \mathds 1_{Y}(y)Q(dy|x) = Q(Y|x) \le e^\alpha Q(Y|x') = e^\alpha \int_{\mathcal Y} \mathds 1_{Y}(y)Q(dy|x')$ and approximate the integrand $y\mapsto T(Z|y)$ in $TQ(Z|x) = \int_{\mathcal Y} T(Z|y)Q(dy|x)$ by simple functions.
\end{proof}

\begin{lemma}\label{lemma:convexity}
Let $\lambda\in(0,1)$, $x,y>0$ and $a,b\in\R$. Then 
$$
\frac{(\lambda a + (1-\lambda)b)^2}{\lambda x + (1-\lambda)y} \le \lambda \frac{a^2}{x} + (1-\lambda)\frac{b^2}{y}.
$$
\end{lemma}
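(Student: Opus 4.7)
The plan is to prove this via the Cauchy--Schwarz inequality, which handles the joint convexity of the map $(a,x)\mapsto a^2/x$ on $\R\times(0,\infty)$ in one line. The key observation is to decompose the numerator $\lambda a + (1-\lambda) b$ as an inner product of two vectors in $\R^2$ whose squared Euclidean norms reproduce the two sides of the target inequality.

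Concretely, I would set $u := (\sqrt{\lambda x},\, \sqrt{(1-\lambda) y})$ and $v := (\sqrt{\lambda/x}\,a,\, \sqrt{(1-\lambda)/y}\,b)$, both well defined because $x,y>0$ and $\lambda\in(0,1)$. A direct computation gives $\langle u,v\rangle = \lambda a + (1-\lambda) b$, $\|u\|^2 = \lambda x + (1-\lambda) y$, and $\|v\|^2 = \lambda a^2/x + (1-\lambda) b^2/y$. Cauchy--Schwarz then yields $\langle u,v\rangle^2 \le \|u\|^2\|v\|^2$, which, upon dividing both sides by $\|u\|^2 = \lambda x + (1-\lambda) y > 0$, is exactly the claimed inequality.

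A purely algebraic alternative, should one prefer to avoid invoking Cauchy--Schwarz, is to cross-multiply by $(\lambda x + (1-\lambda)y)\, xy > 0$ and verify that the difference of the two sides reduces to $\lambda(1-\lambda)(ay-bx)^2 \ge 0$. The expansion is elementary: the terms $\lambda^2 a^2 xy$ and $(1-\lambda)^2 b^2 xy$ on the upper-bound side cancel with their counterparts from $xy(\lambda a + (1-\lambda) b)^2$, and what remains is $\lambda(1-\lambda)[a^2 y^2 - 2abxy + b^2 x^2]$, i.e.\ the advertised non-negative square.

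There is no real obstacle here: strict positivity of $x$, $y$, and $\lambda(1-\lambda)$ is all that is needed, and either argument is essentially a single line once the right grouping is recognized. I would go with the Cauchy--Schwarz presentation for brevity.
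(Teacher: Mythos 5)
Your proof is correct, and both variants you offer are genuinely different from the paper's argument. The paper also begins by cross-multiplying by $\lambda x + (1-\lambda)y$; after cancellation it reduces the claim to $2ab \le a^2\,y/x + b^2\,x/y$, which it then proves by a small calculus detour: defining $h(a) = a^2\,y/x + b^2\,x/y - 2ab$, checking $h''>0$, locating the minimizer $a^* = bx/y$, and verifying $h(a^*)=0$. Your algebraic alternative is cleaner at this last step — multiplying additionally by $xy$ makes the residual collapse immediately to $\lambda(1-\lambda)(ay-bx)^2 \ge 0$, a perfect square that requires no differentiation; the paper's calculus is in effect re-deriving the factorization. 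Your Cauchy--Schwarz route is the most structural of the three: identifying the left side as $\langle u,v\rangle^2 / \|u\|^2$ with $u = (\sqrt{\lambda x}, \sqrt{(1-\lambda)y})$ and $v = (\sqrt{\lambda/x}\,a, \sqrt{(1-\lambda)/y}\,b)$ makes the inequality an instance of a standard one, and it generalizes effortlessly to arbitrary finite convex combinations and to the joint convexity of $(a,x)\mapsto a^2/x$, which the paper's pointwise calculus computation does not. Any of the three is a complete proof; yours buy generality (Cauchy--Schwarz) or brevity (explicit square), while the paper's buys only elementarity.
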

\begin{proof}
We expand the square and multiply both sides by $\lambda x + (1-\lambda)y$ to obtain, after simplifying, the equivalent inequality
$$
2ab\le a^2\frac{y}{x} + b^2\frac{x}{y}.
$$
Now define $h(a) := a^2\frac{y}{x} + b^2\frac{x}{y} - 2ab$. We want to show that $h(a)\ge 0$ for all $a\in\R$. Since $h'(a) = 2a\frac{y}{x} -2b$ and $h''(a) = 2\frac{y}{x}>0$, we easily see that $h$ is strictly convex and attains its unique minimum at $a^*= b\frac{x}{y}$. Since $h(a)\ge h(a^*)= 0$ the claim follows.
\end{proof}

\begin{lemma}\label{lemma:Qdiff}
For an open subset $\Theta\subseteq\R$ and a measure space $(\X,\mathcal F, \mu)$, let $f:\Theta\times \X\to\R$ be continuously differentiable in its first argument and such that $x\mapsto f(\theta,x)$ and $x\mapsto \dot{f}(\theta,x) := \frac{\partial}{\partial\theta} f(\theta,x)$ are $\mu$-integrable for every $\theta\in\Theta$ and $\theta\mapsto \|\dot{f}(\theta,\cdot)\|_{L_1(\mu)}$ is continuous and finite. Let $q:\X\to\R$ be measurable and bounded. Then $\theta\mapsto \hat{q}(\theta) := \int_\X q(x) f(\theta,x)\mu(dx)$ is differentiable with derivative $\frac{d}{d\theta}\hat{q}(\theta) =  \int_\X q(x) \dot{f}(\theta,x)\mu(dx)$. If, in addition, $\theta\mapsto \dot{f}(\theta,\cdot)$ is continuous as a function from $\Theta$ to $L_1(\mu)$, then the derivative $\theta\mapsto\frac{d}{d\theta}\hat{q}(\theta)$ is also continuous.
\end{lemma}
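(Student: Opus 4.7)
My plan is to reduce the problem to the one-dimensional fundamental theorem of calculus via a Fubini swap, with Scheff\'e's lemma supplying the continuity property needed to close the argument. The standard dominated convergence proof of the Leibniz rule is not available here because the hypothesis does not furnish a single $\mu$-integrable function dominating $x\mapsto\dot f(\theta,x)$ uniformly in $\theta$; only the scalar map $\theta\mapsto\|\dot f(\theta,\cdot)\|_{L_1(\mu)}$ is assumed to be continuous.

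For $\theta\in\Theta$ and $h\neq 0$ small enough that the closed interval between $\theta$ and $\theta+h$ lies in the open set $\Theta$, I would first apply the one-dimensional FTC to the $C^1$ function $\tau\mapsto f(\tau,x)$ for each fixed $x\in\X$ to obtain
$$
f(\theta+h,x)-f(\theta,x)=\int_\theta^{\theta+h}\dot f(\tau,x)\,d\tau.
$$
Multiplying by $q(x)$ and integrating over $\X$, Fubini's theorem then yields
$$
\frac{\hat q(\theta+h)-\hat q(\theta)}{h}=\frac{1}{h}\int_\theta^{\theta+h} g(\tau)\,d\tau,\quad\text{where}\quad g(\tau):=\int_\X q(x)\dot f(\tau,x)\,\mu(dx).
$$
The swap is justified because $(\tau,x)\mapsto q(x)\dot f(\tau,x)$ is jointly measurable (Carath\'eodory: continuous in $\tau$ for each $x$, measurable in $x$ for each $\tau$) and $\|q\|_\infty\int_\theta^{\theta+h}\|\dot f(\tau,\cdot)\|_{L_1(\mu)}\,d\tau$ is finite, since the inner norm is continuous on the compact interval of integration.

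It now suffices to show that $g$ is continuous at $\theta$, so that the last display converges to $g(\theta)$ by the familiar FTC for continuous integrands. Continuity of $g$ follows from \emph{Scheff\'e's lemma}: for any sequence $\tau_n\to\theta$, the continuous differentiability of $f$ in its first argument gives $\dot f(\tau_n,x)\to\dot f(\theta,x)$ for every $x\in\X$, while the continuity of $\tau\mapsto\|\dot f(\tau,\cdot)\|_{L_1(\mu)}$ gives convergence of $L^1$-norms; together these two facts imply $\dot f(\tau_n,\cdot)\to\dot f(\theta,\cdot)$ in $L_1(\mu)$, and consequently $|g(\tau_n)-g(\theta)|\le\|q\|_\infty\|\dot f(\tau_n,\cdot)-\dot f(\theta,\cdot)\|_{L_1(\mu)}\to 0$. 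This proves differentiability of $\hat q$ with derivative $g$. For the second claim, continuity of $g=\frac{d}{d\theta}\hat q$ follows from the additional hypothesis by the same $L^1$-norm inequality (and in fact the Scheff\'e step just used already delivers that hypothesis from the assumptions of the first part, so the continuity claim is automatic).

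The main obstacle is conceptual rather than technical: recognizing that the classical DCT-based Leibniz argument does not apply under these hypotheses, and replacing it with Fubini plus Scheff\'e. Once this substitution is identified, each individual step is routine.
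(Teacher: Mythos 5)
Your argument is correct, and it takes a mildly different organizational route than the paper's proof, so a brief comparison is warranted. Both proofs rely on the same two ingredients: the one-dimensional FTC to replace the difference quotient by an average of $\dot f(\tau,\cdot)$, and a Scheff\'e--Riesz-type result (pointwise convergence plus convergence of $L^1$ norms implies $L^1$ convergence, which is exactly \citet[][Proposition~2.29]{vanderVaart07}, cited by the paper). Where you diverge is in what that convergence result is applied to: the paper keeps the order of integration fixed, writes $g_h(x)=\int_0^1\dot f(\theta+uh,x)\,du$, and proves the $L^1$-convergence $g_h\to\dot f(\theta,\cdot)$ of the averaged difference quotient directly (bounding $\|g_h\|_{L^1}$ by $\int_0^1\|\dot f(\theta+uh,\cdot)\|_{L^1}\,du$ and squeezing via Fatou on one side and continuity of the norm map on the other). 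You instead perform a Fubini swap first, reducing the claim to continuity of the scalar function $g(\tau)=\int q\dot f(\tau,\cdot)\,d\mu$, which you then obtain by applying the Scheff\'e--Riesz step to the map $\tau\mapsto\dot f(\tau,\cdot)$ itself. Your route has the pleasant side effect of showing that the extra hypothesis in the second sentence of the lemma (that $\theta\mapsto\dot f(\theta,\cdot)$ is $L^1$-continuous) is already implied by the hypotheses of the first sentence, so the continuity of $\theta\mapsto\frac{d}{d\theta}\hat q(\theta)$ comes for free; the paper does not observe this. One small terminological caveat: the classical Scheff\'e lemma is stated for nonnegative integrands (densities); the version you need here, for signed $L^1$ functions, is the Riesz--Scheff\'e variant, which is precisely what the paper's cited \citet[Proposition~2.29]{vanderVaart07} provides, so you should cite that (or state the signed version explicitly) rather than Scheff\'e unqualified.
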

\begin{proof}
For $h\in\R\setminus\{0\}$, $x\in\X$ and $\theta\in\Theta$, write $g_h(x) := \frac{1}{h}[f(\theta+h,x)-f(\theta,x)] \to \dot{f}(\theta,x)$ as $h\to0$, and note that
$$
\frac{1}{h}[\hat{q}(\theta+h)-\hat{q}(\theta)] = \int_\X q(x)g_h(x)\mu(dx).
$$
By the fundamental theorem of calculus, $g_h(x) = \int_0^1 \dot{f}(\theta+uh,x)du$ and thus $\|g_h\|_{L_1(\mu)} \le \int_0^1 \|\dot{f}(\theta+uh,\cdot)\|_{L_1(\mu)}du$. By assumption, the integral in the upper bound is finite and continuous as a function in $h$ on a closed neighborhood $U$ of $0$ and thus also uniformly bounded on $U$. By the dominated convergence theorem the integral converges to $\|\dot{f}(\theta,\cdot)\|_{L_1(\mu)}$ as $h\to0$. We conclude that $\limsup_{h\to0} \|g_h\|_{L_1(\mu)} \le \|\dot{f}(\theta,\cdot)\|_{L_1(\mu)}$. Thus, Proposition~2.29 in \citet{vanderVaart07} yields $\|g_h- \dot{f}(\theta,\cdot)\|_{L_1(\mu)} \to0$, as $h\to0$, and we obtain
\begin{align*}
\left| \frac{1}{h}[\hat{q}(\theta+h)-\hat{q}(\theta)] - \int_\X q(x) \dot{f}(\theta,x)\mu(dx)\right| &\le 
\int_\X |q(x)| |g_h(x) - \dot{f}(\theta,x)|\mu(dx)\\
&\le \|q\|_\infty \|g_h- \dot{f}(\theta,\cdot)\|_{L_1(\mu)} \xrightarrow[h\to0]{}0.
\end{align*}
For the continuity claim, simply note that
$$
\left| \int_\X q(x) \dot{f}(\theta',x)\mu(dx) - \int_\X q(x) \dot{f}(\theta,x)\mu(dx)\right| \le \|q\|_\infty  \|\dot{f}(\theta',\cdot) - \dot{f}(\theta,\cdot)\|_{L_1(\mu)} \xrightarrow[\theta'\to\theta]{} 0.
$$
\end{proof}

\bibliographystyle{imsart-nameyear}
\bibliography{../../../bibtex/lit}{}
%\bibliography{article}{}

\end{document}